\tikzset{nomorepostaction/.code={\let\tikz@postactions\pgfutil@empty}}
\newtheorem{thm}{Theorem}[section]
\newtheorem{cor}[thm]{Corollary}
\newtheorem{prop}[thm]{Proposition}
\newcommand{\To}{\longrightarrow}
\newcommand{\C}{\mathbb{C}}
\newcommand{\Z}{\mathbb{Z}}
\newcommand{\R}{\mathbb{R}}
\newcommand{\CP}{\mathbb{CP}}
\begin{document}

\title{Contact topology and holomorphic invariants via elementary combinatorics}

\author{Daniel V. Mathews}

\date{}

\maketitle

\begin{abstract}
In recent times a great amount of progress has been achieved in symplectic and contact geometry, leading to the development of powerful invariants of 3-manifolds such as Heegaard Floer homology and embedded contact homology. These invariants are based on holomorphic curves and moduli spaces, but in the simplest cases, some of their structure reduces to some elementary combinatorics and algebra which may be of interest in its own right. In this note, which is essentially a light-hearted exposition of some previous work of the author, we give a brief introduction to some of the ideas of contact topology and holomorphic curves, discuss some of these elementary results, and indicate how they arise from holomorphic invariants.
\end{abstract}

\tableofcontents

\section{Introduction}

In recent years a great amount of progress has been achieved in understanding the structures of symplectic and contact geometry. Powerful invariants of manifolds and their contact and symplectic structures have been defined, such as Heegaard Floer homology and contact homology. These theories are based on generalised Cauchy-Riemann equations, holomorphic curves and their moduli spaces. 

In a series of papers, the author has developed some of the structure that arises in some of the most simple cases of the holomorphic invariants known as sutured Floer homology and embedded contact homology \cite{Me09Paper, Me10_Sutured_TQFT, Me11_torsion_tori, Me12_itsy_bitsy, Mathews_Schoenfeld12_string}. This note is essentially a light-hearted exposition of some of that subject matter.

The structures that we discuss here are algebraic and combinatorial, and entirely elementary. They may be of interest for their own sake, as well as for their relation to other subjects, including quantum information theory, representation theory, topological quantum field theory, and especially contact geometry. That all this structure can arise from the most simple cases of sutured Floer homology and embedded contact homology, testifies to the power of these invariants.

It is our desire here to convey some of these results to a broad mathematical audience. Consequently, this note assumes no knowledge of symplectic or contact geometry or holomorphic curves. Since the subject matter is in a certain sense a ``combinatorialization of contact geometry'', we will give an exposition, as we proceed, of how these elementary results relate to contact geometry --- and this may serve as an unorthodox introduction to some of the ideas of contact geometry. 

The reader who is only interested in combinatorics or algebra can easily skip the sections on symplectic and contact geometry and holomorphic curves. The reader without such background, but who wishes to know where the subject matter comes from, can hopefully gain here some idea of the types of considerations involved, whether in symplectic or contact geometry or holomorphic curves, and is encouraged to follow the references for further details.

\section{Symplectic and contact geometry}

We begin with a little --- very, very little --- about what symplectic and contact geometry are, and where some holomorphic invariants come from. For a proper introduction to symplectic geometry we refer to \cite{McDuffSalamon_Introduction} and to \cite{Geiges_Introduction} or \cite{Et02} for contact geometry.  We will give essentially no proofs in this section, just assert some basic facts. 

None of this is needed for the combinatorics and algebra that we will shortly discuss, but it will be useful as we proceed to make connections with contact geometry and holomorphic invariants. The reader who is solely interested in combinatorial and algebraic aspects can safely skip this section.

\subsection{Symplectic geometry}

Symplectic geometry is the mathematical structure of Hamiltonian mechanics. A symplectic manifold is a pair
\[
(M, \omega),
\]
where $M$ is a smooth manifold and $\omega$ is a closed non-degenerate differential $2$-form on $M$. This implies that $M$ is even-dimensional.

The key property that allows this structure to produce mechanics is that any smooth function $H: M \To \R$ (called a \emph{Hamiltonian}) has a naturally associated vector field $X_H$ on $M$. Namely, from $H$ we obtain a differential $1$-form $dH$, and then the non-degeneracy of $\omega$ gives the vector field $X_H$ via the equation
\[
\omega( X_H, \cdot ) = dH.
\]
(Different authors have different sign conventions in this equation, but this is the idea.)

The fact that $\omega$ is closed implies that flowing along $X_H$ leaves $\omega$ unchanged:
\[
L_{X_H} \omega = i_{X_H} d\omega + d i_{X_H} \omega = i_{X_H} 0 + d(dH) = 0.
\]
The physical interpretation is that $M$ is the phase space (``space of states'') of the universe, $H$ gives the energy of any state, and $X_H$ is the time evolution of the universe.

The simplest example of a symplectic manifold is $\R^{2n}$ with the symplectic form
\[
\omega = \sum_{j=1}^n dx_j \wedge dy_j.
\]
Here each $y_j$ can be considered a ``position coordinate'' and each $x_j$ a corresponding ``conjugate momentum''.

\subsection{Symplectic vs. complex geometry}

The fact that symplectic geometry only arises in even numbers of dimensions suggests a similarity to complex geometry. Indeed this is even the etymological root: Weyl introduced the word ``symplectic'' as a Greek version of the word ``complex'' \cite{Weyl_Classical}. The Latin \emph{complexus} and the Greek \emph{symplektikos} both mean ``braided together''. 

There is a difference of course. In a complex manifold every point has a neighbourhood homeomorphic to $\C^n$. In particular, for every direction there is also ``$i$ times'' that direction. This is very different from having a closed non-degenerate $2$-form.

The relationship between symplectic and complex geometry has been exploited to great effect in the last 25 years or so, with the use of \emph{holomorphic curves}, starting with the work of Gromov in 1985 \cite{Grom}, leading to great advances in the understanding of symplectic geometry.

In particular, any symplectic manifold $(M, \omega)$ has an \emph{almost complex structure}. An almost complex structure is a map which is like ``multiplication by $i$ at every point of $M$''. That is, $J$ is a map
\[
J \; : \; TM \To TM
\]
which takes every fibre $T_p M \to T_p M$ and satisfies $J^2 = -1$. 

\begin{center}
\def\svgwidth{120pt}
\begingroup%
  \makeatletter%
  \providecommand\color[2][]{%
    \errmessage{(Inkscape) Color is used for the text in Inkscape, but the package 'color.sty' is not loaded}%
    \renewcommand\color[2][]{}%
  }%
  \providecommand\transparent[1]{%
    \errmessage{(Inkscape) Transparency is used (non-zero) for the text in Inkscape, but the package 'transparent.sty' is not loaded}%
    \renewcommand\transparent[1]{}%
  }%
  \providecommand\rotatebox[2]{#2}%
  \ifx\svgwidth\undefined%
    \setlength{\unitlength}{219.48855534bp}%
    \ifx\svgscale\undefined%
      \relax%
    \else%
      \setlength{\unitlength}{\unitlength * \real{\svgscale}}%
    \fi%
  \else%
    \setlength{\unitlength}{\svgwidth}%
  \fi%
  \global\let\svgwidth\undefined%
  \global\let\svgscale\undefined%
  \makeatother%
  \begin{picture}(1,0.57415684)%
    \put(0,0){\includegraphics[width=\unitlength]{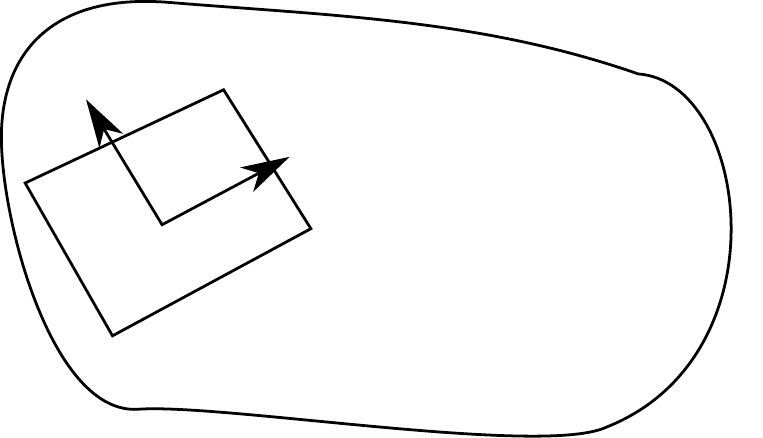}}%
    \put(0.74377454,0.25326604){\color[rgb]{0,0,0}\makebox(0,0)[lb]{\smash{$M$}}}%
    \put(0.3740841,0.37302493){\color[rgb]{0,0,0}\makebox(0,0)[lb]{\smash{$v$}}}%
    \put(0.07989354,0.45893893){\color[rgb]{0,0,0}\makebox(0,0)[lb]{\smash{$Jv$}}}%
    \put(0.15799714,0.24545563){\color[rgb]{0,0,0}\makebox(0,0)[lb]{\smash{$p$}}}%
    \put(0.17361784,0.07362762){\color[rgb]{0,0,0}\makebox(0,0)[lb]{\smash{$T_p M$}}}%
  \end{picture}%
\endgroup%

\end{center}

An almost complex structure is often required to be \emph{compatible} with the symplectic form. Roughly this means that $J$ and $\omega$ behave like $i$ and $\sum_j dx_j \wedge dy_j$ in $\C^n$ (where the $z_j = x_j + i y_j$ are the coordinates). (Precisely, it means that $\omega( v, w ) = \omega (Jv, Jw)$ for all tangent vectors $v,w$; and $\omega( v, Jv) > 0$ for all tangent vectors $v \neq 0$.)

It turns out that any symplectic manifold has a compatible almost complex structure. Moreover, all almost complex structures on $(M, \omega)$ are homotopic. (The space of almost complex structures on $M$ is the space of sections of a fibre bundle over $M$ with contractible fibres.) However, an almost complex structure by no means implies the existence of a complex structure. An almost complex structure only requires a pointwise condition. A complex structure requires local charts to $\C^n$ with holomorphic transition functions, which is a far more onerous condition. To drop the ``almost'' requires an additional condition, the vanishing of the Nijenhuis tensor.

For proper discussion of these issues we refer to \cite{McDuffSalamon_Introduction} or \cite{McDuff_Salamon_J-holomorphic}.

\subsection{Holomorphic curves in symplectic manifolds}

Gromov in \cite{Grom} had the idea of considering \emph{holomorphic curves in symplectic manifolds}. This has led to many developments, including the development of \emph{Floer homology} theories. 

The point of this note is not really to explain any such homology theories, or the detail of why they work, or how they work, or how to compute them. The point is to discuss some of the structure obtained from them in some very simple cases, which is elementary and of independent interest.

Nonetheless, for the interested reader, we can summarise some of the ideas involved, very roughly and avoiding all details, as follows. Readers not interested in holomorphic curves should skip to the next section.
\begin{itemize}
\item
Start with a symplectic manifold $(M, \omega)$ you want to understand.
\item
Introduce a compatible almost complex structure $J$ on $(M, \omega)$. As we just noted, one always exists, and any choice is homotopic to any other.
\item
Take a Riemann surface $(\Sigma, i)$ and consider maps
\[
u: (\Sigma, i) \To (M, J)
\]
which are \emph{holomorphic},\footnote{It is common to say ``pseudo-holomorphic'' or ``$J$-holomorphic'' to indicate that the target has an almost complex structure, rather than a complex structure. We prefer simply to say holomorphic here and there should be no ambiguity.} meaning that the following diagram commutes.
\[
\begin{tikzpicture}
\draw (0,0) node {$T\Sigma$};
\draw (3,0) node {$TM$};
\draw (0,-2) node {$T\Sigma$};
\draw (3,-2) node {$TM$};
\draw [->] (0.5,0) -- (2.5,0)
	node [above, align=center, midway] {$Du$};
\draw [->] (0.5,-2) -- (2.5,-2)
	node [above, align=center, midway] {$Du$};
\draw [->] (0,-0.4) -- (0,-1.6)
	node [left, align=center, midway] {$i$};
\draw [->] (3,-0.4) -- (3,-1.6) node [right, align=center, midway] {$J$};
\end{tikzpicture}
\]
I.e., the \emph{Cauchy--Riemann equations} hold: $Du \circ i = J \circ Du$.

\begin{center}
\def\svgwidth{300pt}
\begingroup%
  \makeatletter%
  \providecommand\color[2][]{%
    \errmessage{(Inkscape) Color is used for the text in Inkscape, but the package 'color.sty' is not loaded}%
    \renewcommand\color[2][]{}%
  }%
  \providecommand\transparent[1]{%
    \errmessage{(Inkscape) Transparency is used (non-zero) for the text in Inkscape, but the package 'transparent.sty' is not loaded}%
    \renewcommand\transparent[1]{}%
  }%
  \providecommand\rotatebox[2]{#2}%
  \ifx\svgwidth\undefined%
    \setlength{\unitlength}{500.12733106bp}%
    \ifx\svgscale\undefined%
      \relax%
    \else%
      \setlength{\unitlength}{\unitlength * \real{\svgscale}}%
    \fi%
  \else%
    \setlength{\unitlength}{\svgwidth}%
  \fi%
  \global\let\svgwidth\undefined%
  \global\let\svgscale\undefined%
  \makeatother%
  \begin{picture}(1,0.32779624)%
    \put(0,0){\includegraphics[width=\unitlength]{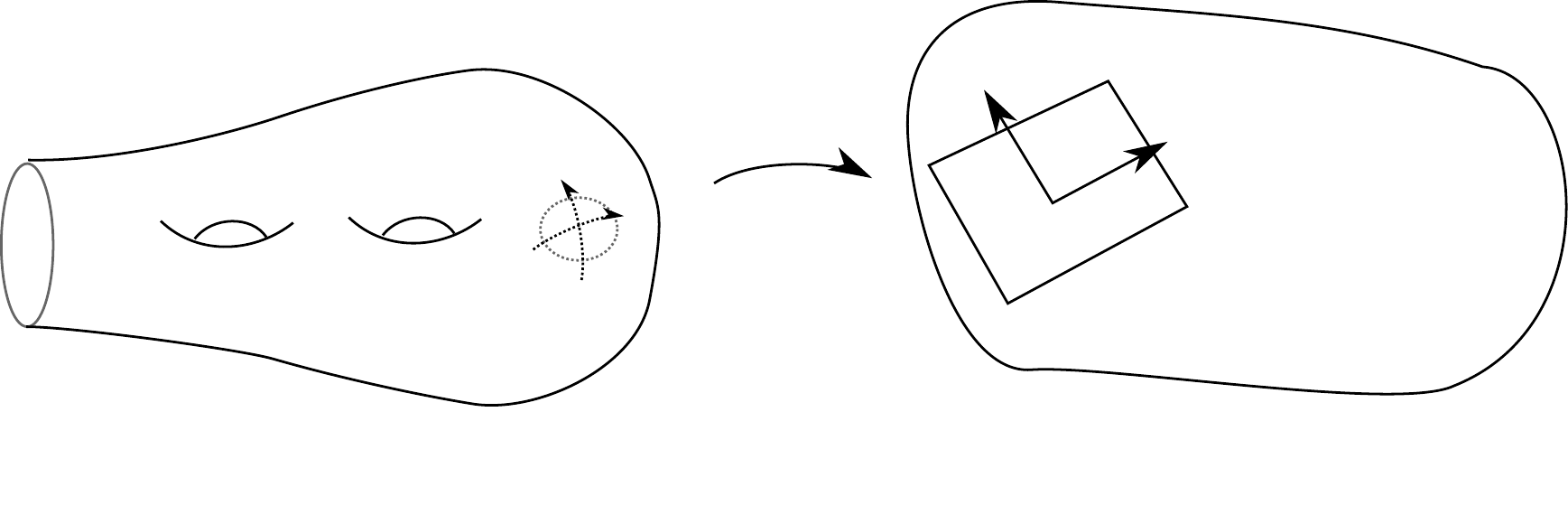}}%
    \put(0.75603608,0.01101319){\color[rgb]{0,0,0}\makebox(0,0)[lb]{\smash{$M$}}}%
    \put(0.74232531,0.23952641){\color[rgb]{0,0,0}\makebox(0,0)[lb]{\smash{$v$}}}%
    \put(0.61321527,0.27723109){\color[rgb]{0,0,0}\makebox(0,0)[lb]{\smash{$Jv$}}}%
    \put(0.34585479,0.09327796){\color[rgb]{0,0,0}\makebox(0,0)[lb]{\smash{$\mathbb{C}$}}}%
    \put(0.47039451,0.25437977){\color[rgb]{0,0,0}\makebox(0,0)[lb]{\smash{$u$}}}%
    \put(0.04307475,0.01215575){\color[rgb]{0,0,0}\makebox(0,0)[lb]{\smash{$\Sigma$}}}%
  \end{picture}%
\endgroup%

\end{center}

\item
If we consider holomorphic curves with appropriate constraints such as marked points, fixed degree and boundary conditions, and provided sufficient transversality conditions are satisfied, then the space of holomorphic curves will be a \emph{finite-dimensional} orbifold-like space called a \emph{moduli space}.

(This is a vast generalisation of elementary facts such as the following. The space of holomorphic maps $\CP^1 \To \CP^1$ of degree $1$ fixing $3$ points is a point. The space of holomorphic maps $\CP^1 \To \CP^1$ of degree $1$ fixing $2$ points is an annulus.)

\item
More generally, under these favourable conditions, there is an index theory for holomorphic curves. This is a version of the Riemann--Roch theorem and a special case of the Atiyah--Singer index theorem; the Cauchy--Riemann equations give a $\bar{\partial}$ operator and its index is computed in terms of topological data. 
The dimension of the moduli space is given in terms of the symplectic topology of the constraints. Such moduli spaces intricately encode topological data about the manifold. 

\item
A moduli space has a natural \emph{compactification} which is the subject of the \emph{Gromov compactness theorem}. The compactified moduli space has a stratified boundary, and the strata are moduli spaces of ``degenerate'' holomorphic curves such as nodal surfaces.

\begin{center}
\def\svgwidth{300pt}
\begingroup%
  \makeatletter%
  \providecommand\color[2][]{%
    \errmessage{(Inkscape) Color is used for the text in Inkscape, but the package 'color.sty' is not loaded}%
    \renewcommand\color[2][]{}%
  }%
  \providecommand\transparent[1]{%
    \errmessage{(Inkscape) Transparency is used (non-zero) for the text in Inkscape, but the package 'transparent.sty' is not loaded}%
    \renewcommand\transparent[1]{}%
  }%
  \providecommand\rotatebox[2]{#2}%
  \ifx\svgwidth\undefined%
    \setlength{\unitlength}{488.16840907bp}%
    \ifx\svgscale\undefined%
      \relax%
    \else%
      \setlength{\unitlength}{\unitlength * \real{\svgscale}}%
    \fi%
  \else%
    \setlength{\unitlength}{\svgwidth}%
  \fi%
  \global\let\svgwidth\undefined%
  \global\let\svgscale\undefined%
  \makeatother%
  \begin{picture}(1,0.26570928)%
    \put(0,0){\includegraphics[width=\unitlength]{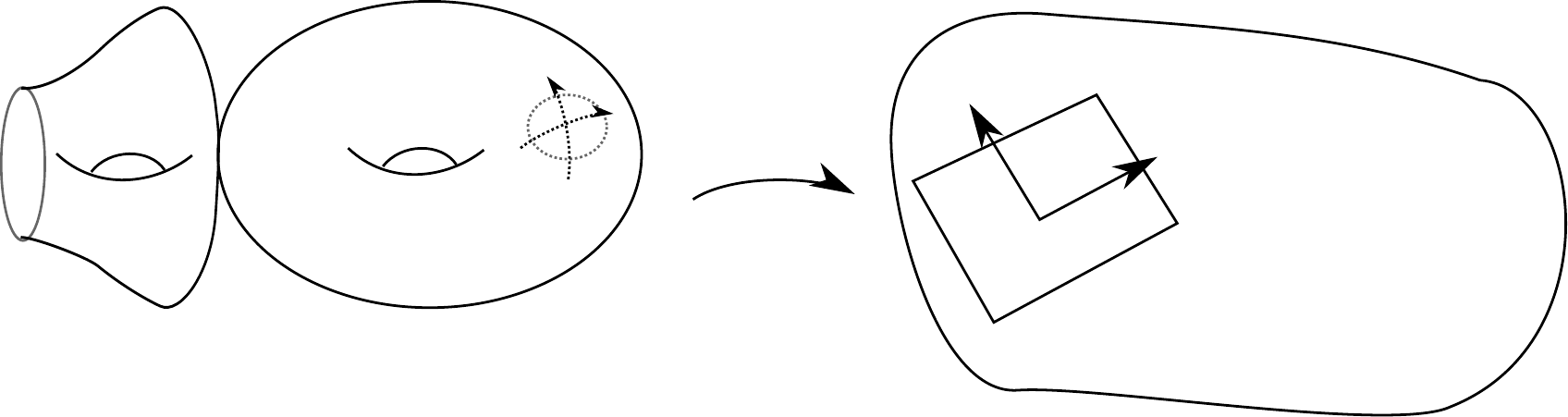}}%
    \put(0.82263404,0.04012754){\color[rgb]{0,0,0}\makebox(0,0)[lb]{\smash{$M$}}}%
    \put(0.73601292,0.16771815){\color[rgb]{0,0,0}\makebox(0,0)[lb]{\smash{$v$}}}%
    \put(0.60374,0.2063465){\color[rgb]{0,0,0}\makebox(0,0)[lb]{\smash{$Jv$}}}%
    \put(0.45742049,0.18293538){\color[rgb]{0,0,0}\makebox(0,0)[lb]{\smash{$u$}}}%
  \end{picture}%
\endgroup%

\end{center}

\item
Some information can be extracted from a moduli space by exploiting the codimension-1 parts of its boundary, and defining \emph{homology theories} based on it. Roughly, and although there are several different approaches, one takes a chain complex generated by sets of boundary conditions for holomorphic curves. One then defines a differential $\partial$ counting holomorphic curves between two sets of boundary conditions, at positive and negative ends respectively. Under favourable conditions, the boundary structure of the moduli space and the index theory of solutions to the Cauchy-Riemann equations give $\partial^2 = 0$. 

This is analogous to how the singular homology of a smooth manifold can be obtained from a Morse function $f$ via a chain complex (the \emph{Morse complex}) generated by critical points of $f$, and a differential counting gradient trajectories between critical points. (Critical points are ``boundary conditions for gradient trajectories'').

\begin{center}
\def\svgwidth{100pt}
\begingroup%
  \makeatletter%
  \providecommand\color[2][]{%
    \errmessage{(Inkscape) Color is used for the text in Inkscape, but the package 'color.sty' is not loaded}%
    \renewcommand\color[2][]{}%
  }%
  \providecommand\transparent[1]{%
    \errmessage{(Inkscape) Transparency is used (non-zero) for the text in Inkscape, but the package 'transparent.sty' is not loaded}%
    \renewcommand\transparent[1]{}%
  }%
  \providecommand\rotatebox[2]{#2}%
  \ifx\svgwidth\undefined%
    \setlength{\unitlength}{312.06857262bp}%
    \ifx\svgscale\undefined%
      \relax%
    \else%
      \setlength{\unitlength}{\unitlength * \real{\svgscale}}%
    \fi%
  \else%
    \setlength{\unitlength}{\svgwidth}%
  \fi%
  \global\let\svgwidth\undefined%
  \global\let\svgscale\undefined%
  \makeatother%
  \begin{picture}(1,0.94147405)%
    \put(0,0){\includegraphics[width=\unitlength]{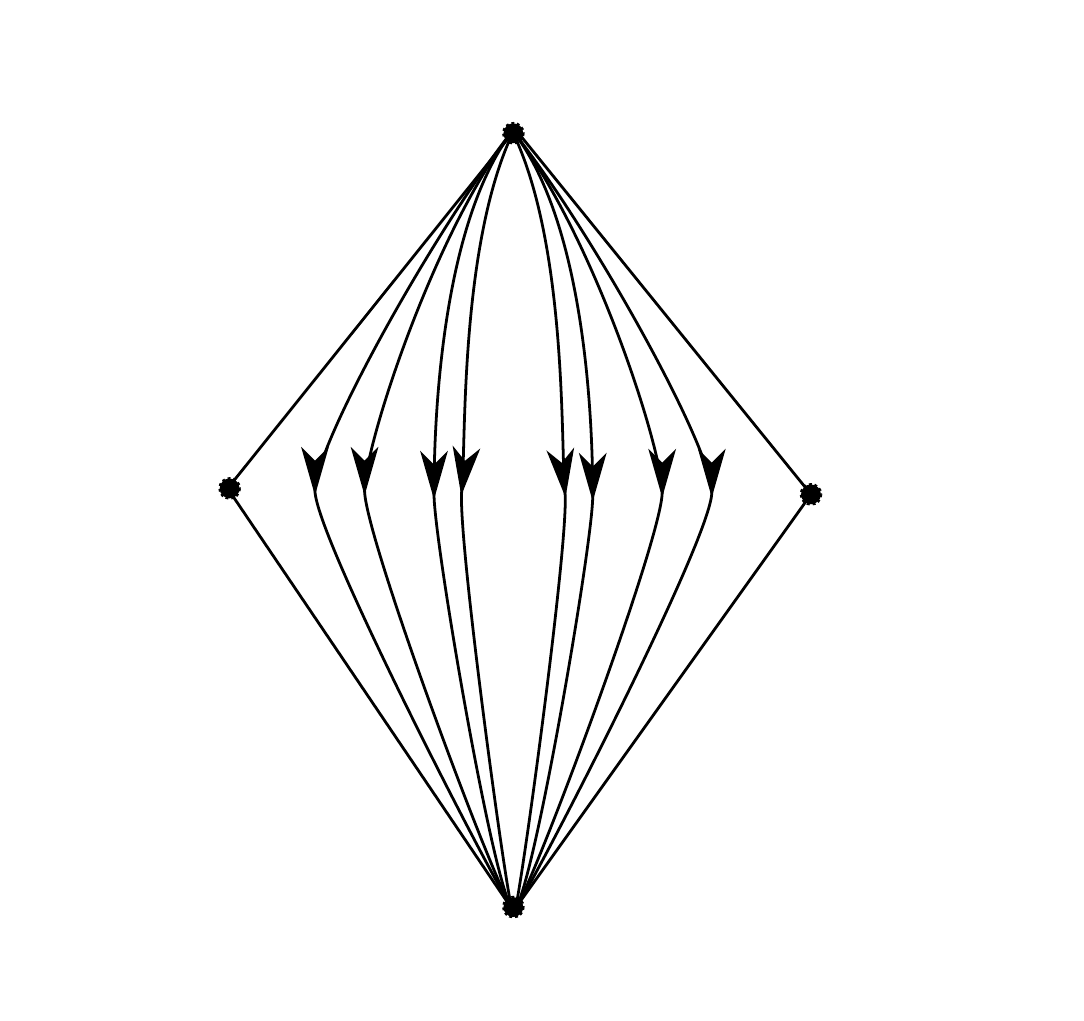}}%
    \put(0.42983228,0.86792193){\color[rgb]{0,0,0}\makebox(0,0)[lb]{\smash{Index $i$}}}%
    \put(0.79239,0.47789775){\color[rgb]{0,0,0}\makebox(0,0)[lb]{\smash{$i-1$}}}%
    \put(-0.00230716,0.49620877){\color[rgb]{0,0,0}\makebox(0,0)[lb]{\smash{$i-1$}}}%
    \put(0.41335242,0.00913634){\color[rgb]{0,0,0}\makebox(0,0)[lb]{\smash{$i-2$}}}%
  \end{picture}%
\endgroup%

\end{center}

\item
Depending on the details of how this chain complex is constructed, one can obtain various flavours of \emph{Floer homology}, \emph{contact homology} or \emph{symplectic field theory}. An intricate algebraic structure of generating functions can be used to keep track of detail about counts of holomorphic curves.

\item
It often turns out that the resulting homology is independent of the choice of almost complex structure $J$, and sometimes even of the underlying symplectic or contact structure. It is possible to obtain \emph{smooth} manifold or \emph{knot} invariants. 

\item
\emph{Heegaard Floer homology theories} are a very powerful variant of the above ideas, introduced by Ozsv{\'a}th and Szab{\'o} \cite{OS04Prop, OS04Closed, OS06}. From a $3$-manifold $M$, we take its \emph{Heegaard decomposition} consisting of a surface $\Sigma$ with curves $\alpha_1, \ldots, \alpha_g$ bounding discs on one side and $\beta_1, \ldots, \beta_g$ on the other. We then (as in \cite{Lip}) consider holomorphic curves in the symplectic manifold $\Sigma \times I \times \R$ with asymptotics prescribed by the $\alpha_i$ and $\beta_j$. 

It turns out that one can set up a chain complex so that the resulting homology is independent of any symplectic or almost complex structure chosen, giving a \emph{smooth manifold invariant} of $M$. Different versions, such as the \emph{hat}, \emph{infinity}, \emph{minus}, \emph{knot}, \emph{bordered} \cite{LOT08} and \emph{sutured} \cite{Ju06} Heegaard Floer homologies vary between the types of manifolds they apply to, the complexity of the theory, and the ease of obtaining information.

One indication of the power of Heegaard Floer homology is that it detects the genus of a knot \cite{OS04Knot, OS04Genus}; this can be computed combinatorially \cite{MOSa}.
\end{itemize}

We shall say no more about holomorphic curves until the final section, when we briefly return to sutured Floer homology to see how it is related to the elementary structures forming the main subject of this note.

\subsection{Contact geometry}

Contact geometry is the odd-dimensional sibling of symplectic geometry. Its roots can be traced back to Lie's work on systems of differential equations, and arguably back to Christiaan Huygens \cite{Geiges01, Geiges05}.

One way to define a symplectic structure is a $2$-form $\omega$ satisfying
\[
d\omega = 0 \quad \text{and} \quad \omega^n \text{ is nowhere zero, i.e. a volume form.}
\]

Analogously, a \emph{contact form} $\alpha$ on a $(2n+1)$-dimensional manifold $M$ is a $1$-form satisfying
\[
\alpha \wedge (d\alpha)^n \text{ is nowhere zero, i.e. a volume form.}
\]
The kernel of $\alpha$ is a hyperplane field $\xi$ on $M$ and it is this plane field that is called a \emph{contact structure}. 

Contact manifolds arise in considering wavefronts in optics. They also arise naturally as submanifolds of symplectic manifolds. In fact, so many geometric problems can be interpreted in terms of contact geometry, that Arnold once famously said that ``contact geometry is all geometry''.\cite{Arnold_Symplectic_Contact}

The condition $\alpha \wedge (d\alpha)^n \neq 0$ has a geometric interpretation from Frobenius' theorem in differential geometry. It is that $\xi$ is \emph{totally non-integrable}. 

We are interested in $3$-dimensional contact manifolds. Non-integrability then means that there is no 2-dimensional surface immersed in $M$ which is tangent to $\xi$. There are 1-dimensional curves tangent to $\xi$, but not 2-dimensional surfaces, not even locally.

The simplest example of a $3$-dimensional contact manifold is $\R^3$ with contact form
\[
\alpha = dz - y dx.
\]
The corresponding contact structure $\xi = \ker \alpha$ is shown below.

\begin{center}
\includegraphics[scale=1]{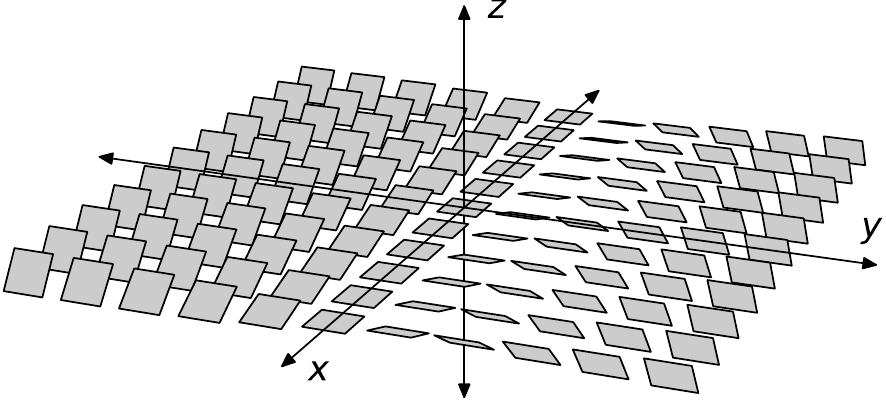}
\end{center}

Any contact manifold with a contact form $(M, \alpha)$ can be used to obtain a symplectic manifold, called its \emph{symplectization}:
\[
\left( M \times \R, \; d \left( e^t \alpha \right) \right),
\]
where $t$ is the coordinate on $\R$. It is then possible to consider holomorphic curves in this symplectization, with asymptotics prescribed by contact geometry. Using the analytic techniques mentioned above, we can obtain holomogy theories such as \emph{contact homology} \cite{Bo} and \emph{embedded contact homology} \cite{Hutchings02}.

Having said a little about contact and symplectic topology, I now propose to drop the subject entirely and talk about some seemingly unrelated algebra and combinatorics. We will later see how this becomes, in a certain sense, a combinatorial version of contact topology, and how it is related to holomorphic invariants.

\section{Quantum pawn dynamics}

The theory of quantum pawn dynamics, or QPD, is a strange theory of pawns on chessboards.

\subsection{Pawns on 1-dimensional chessboards}

This theory has no space, no time, and no proper chessboards. It does have pawns though. The pawns move along a finite 1-dimensional chessboard.

\begin{center}
\begin{tabularx}{0.4\textwidth}{|>{\centering}X|>{\centering}X|>{\centering}X|>{\centering}X|>{\centering}X|>{\centering}X|}
\hline
\WhitePawnOnWhite & & \WhitePawnOnWhite & \WhitePawnOnWhite & & \\
\hline
\end{tabularx}
\end{center}

The pawns move from left to right. As they are pawns, they can only move ahead one space at a time, and only into an unoccupied square. There is no capturing, no en passant, and no double first moves. Two pawns cannot occupy the same square. So from the situation above, the pawns could eventually arrive at

\begin{center}
\begin{tabularx}{0.4\textwidth}{|X|X|X|X|X|X|}
\hline
& \WhitePawnOnWhite & \WhitePawnOnWhite & & \WhitePawnOnWhite & \\
\hline
\end{tabularx}
\end{center}
in two moves, but could never get to
\begin{center}
\begin{tabularx}{0.4\textwidth}{|X|X|X|X|X|X|}
\hline
\WhitePawnOnWhite & \WhitePawnOnWhite & & & \WhitePawnOnWhite & \\
\hline
\end{tabularx}
\end{center}
as the middle pawn would have to move backwards.

There is nothing special about the number of pawns or the size of the chess-``board''. We can have a chess-``board'' of any length $n$, and we can have any number of pawns $n_p$ where $0 \leq n_p \leq n$. Any setup of the board is a state of the QPD universe.

\subsection{Quantum ``inner product''}

We now have pawns, and they move --- hence, dynamics. We now make the dynamics quantum by declaring an ``inner product'' $\langle \cdot | \cdot \rangle$. In quantum mechanics the inner product is supposed to give you a probability amplitude for getting from one state of the universe to another.

The QPD universe is more binary than that. Given one setup of pawns $w_0$, you can either get to another setup of pawns $w_1$ or you can't. And in fact, we do not need a number system more complicated than $\Z_2$. So we do not have probabilities so much as \emph{possibilities}. 

We declare that
\[
\langle w_0 | w_1 \rangle = \left\{ \begin{array}{ll}
	1 & \text{if it is possible for pawns to move from $w_0$ to $w_1$} \\
		&	\text{\quad (in some number of moves, possibly $0$);} \\
	0 & \text{if not.}
	\end{array}
	\right.
\]

So our examples above translate to
\[
\left\langle \quad
\begin{tabularx}{0.4\textwidth}{|>{\centering}X|>{\centering}X|>{\centering}X|>{\centering}X|>{\centering}X|>{\centering}X|}
\hline
\WhitePawnOnWhite & & \WhitePawnOnWhite & \WhitePawnOnWhite & & \\
\hline
\end{tabularx}
\quad | \quad
\begin{tabularx}{0.4\textwidth}{|X|X|X|X|X|X|}
\hline
& \WhitePawnOnWhite & \WhitePawnOnWhite & & \WhitePawnOnWhite & \\
\hline
\end{tabularx}
\quad \right\rangle
\quad = \quad 1
\]
and
\[
\left\langle \quad
\begin{tabularx}{0.4\textwidth}{|X|X|X|X|X|X|}
\hline
\WhitePawnOnWhite & & \WhitePawnOnWhite & \WhitePawnOnWhite & & \\
\hline
\end{tabularx}
\quad | \quad
\begin{tabularx}{0.4\textwidth}{|X|X|X|X|X|X|}
\hline
\WhitePawnOnWhite & \WhitePawnOnWhite & & & \WhitePawnOnWhite & \\
\hline
\end{tabularx}
\quad \right\rangle
\quad = \quad 0
\]

Moreover, this is \emph{quantum} pawn dynamics, so we can have \emph{superpositions} of states --- entangled chessboards! We consider that we can \emph{add} chessboards. But, as we said, we will not consider numbers more complicated than $1$; we take coefficients in $\Z_2$. The space of states is the $\Z_2$-vector space freely generated by chessboard setups. We declare that the ``inner product'' is bilinear, e.g.:
\[
\left\langle \quad
\begin{tabularx}{0.4\textwidth}{|X|X|X|X|X|X|}
\hline
\WhitePawnOnWhite & & \WhitePawnOnWhite & \WhitePawnOnWhite & & \\
\hline
\end{tabularx}
\quad | \quad
\begin{array}{c}
\begin{tabularx}{0.4\textwidth}{|X|X|X|X|X|X|}
\hline
& \WhitePawnOnWhite & \WhitePawnOnWhite & & \WhitePawnOnWhite & \\
\hline
\end{tabularx}\\
\quad + \quad \\
\begin{tabularx}{0.4\textwidth}{|X|X|X|X|X|X|}
\hline
\WhitePawnOnWhite & \WhitePawnOnWhite & & & \WhitePawnOnWhite & \\
\hline
\end{tabularx}
\end{array}
\quad \right\rangle
\quad = 1+0 = 1.
\]

Importantly, note that the ``inner product'' is \emph{not symmetric}. But it is asymmetric in a very interesting way, as we'll see. It is a ``booleanized'' \emph{partial order} on the setups of the chessboard. In fact, the configurations of a chessboard form a \emph{complete lattice} in a natural way.

\subsection{Dirac sea and anti-pawns}

Actually, there is some strong symmetry in QPD. We can think of chessboard with no pawns as a thriving sea of anti-pawns. We can think of any square not occupied by a pawn, as containing an anti-pawn. An anti-pawn is just an ``absence of pawn''. This is very similar to the idea of the ``Dirac sea'' of matter and anti-matter. We draw pawns as white and anti-pawns as black.

\[
\begin{tabularx}{0.4\textwidth}{|X|X|X|X|X|X|}
\hline
\WhitePawnOnWhite & & \WhitePawnOnWhite & \WhitePawnOnWhite & & \\
\hline
\end{tabularx}
=
\begin{tabularx}{0.4\textwidth}{|X|X|X|X|X|X|}
\hline
\WhitePawnOnWhite & \BlackPawnOnWhite & \WhitePawnOnWhite & \WhitePawnOnWhite & \BlackPawnOnWhite & \BlackPawnOnWhite \\
\hline 
\end{tabularx}
\]

Note when a pawn moves right, an anti-pawn moves left. So we can get from one setup of the chessboard to another, iff all pawns move right, iff all anti-pawns move left.

So white and black are not exactly like the opposing sides of chess, but they do move in opposite directions. Each is the absence of the other.

In each of the examples above, we imagine we have 6 pawn-particles: 3 pawns, and 3 anti-pawns. With $n$ as the number of squares on the chessboard / number of pawn-particles, and $n_p$ the number of pawns, we let $n_q = n - n_p$ be the number of anti-pawns.

\subsection{The initial creation and annihilation of pawns}

Being a quantum field theory of pawns, QPD will have to allow us to create and annihilate pawns and anti-pawns.

We'll define the \emph{initial pawn creation operator} $a_{p,0}^*$ to take a chessboard setup, and adjoin a new \emph{initial} square to the chessboard --- i.e. at the left hand side. This new square/particle has a pawn on it (is a pawn-particle).
\[
a_{p,0}^* \quad
\begin{tabularx}{0.4\textwidth}{|X|X|X|X|X|X|}
\hline
\WhitePawnOnWhite & \BlackPawnOnWhite & \WhitePawnOnWhite & \WhitePawnOnWhite & \BlackPawnOnWhite & \BlackPawnOnWhite \\
\hline 
\end{tabularx}
=
\begin{tabularx}{0.48\textwidth}{|X|X|X|X|X|X|X|}
\hline
\WhitePawnOnWhite & \WhitePawnOnWhite & \BlackPawnOnWhite & \WhitePawnOnWhite & \WhitePawnOnWhite & \BlackPawnOnWhite & \BlackPawnOnWhite \\
\hline 
\end{tabularx}
\]
We'll also define the \emph{initial pawn annihilation operator} $a_{p,0}$ to delete an initial pawn. That is, it will annihilate the leftmost square from the chessboard --- the chessboard shrinks and a particle disappears --- provided that square contains a pawn. What does the QPD universe do if the initial square does not contain a pawn, but an anti-pawn? Why, it returns an error of course: error 404 universe not found mod 2 is 0.

So, for example:
\begin{align*}
a_{p,0}
\quad
\begin{tabularx}{0.4\textwidth}{|X|X|X|X|X|X|}
\hline
\WhitePawnOnWhite & \BlackPawnOnWhite & \WhitePawnOnWhite & \WhitePawnOnWhite & \BlackPawnOnWhite & \BlackPawnOnWhite \\
\hline 
\end{tabularx}
&=
\begin{tabularx}{0.33\textwidth}{|X|X|X|X|X|}
\hline
\BlackPawnOnWhite & \WhitePawnOnWhite & \WhitePawnOnWhite & \BlackPawnOnWhite & \BlackPawnOnWhite \\
\hline 
\end{tabularx} \\
a_{p,0} \quad
\begin{tabularx}{0.33\textwidth}{|X|X|X|X|X|}
\hline
\BlackPawnOnWhite & \WhitePawnOnWhite & \WhitePawnOnWhite & \BlackPawnOnWhite & \BlackPawnOnWhite \\
\hline 
\end{tabularx}
&= 0.
\end{align*}

The vacuum $\emptyset$ is the state of the QPD universe with no chessboard. (This is different from $0$.) The universe is a lonely, empty place without a chessboard. But you can make a bang and start your universal chessboard by creating an initial pawn.
\[
a_{p,0}^* \quad \emptyset = 
\begin{tabularx}{0.07\textwidth}{|X|}
\hline
\WhitePawnOnWhite \\
\hline 
\end{tabularx}
\]

Everything we have said for pawns applies also to anti-pawns. So there is an \emph{initial anti-pawn creation operator} $a_{q,0}^\dagger$, which creates a new leftmost square with an anti-pawn. There is also an \emph{initial anti-pawn annihilation operator} $a_{q,0}$, which annihilates a leftmost anti-pawn, or else returns error $0$.

Using initial pawn and anti-pawn creation operators you can build any chessboard setup out of nothing.
\[
a_{p,0}^* a_{q,0}^\dagger a_{p,0}^* a_{p,0}^* a_{q,0}^\dagger a_{q,0}^\dagger \quad \emptyset \quad = \quad 
\begin{tabularx}{0.4\textwidth}{|X|X|X|X|X|X|}
\hline
\WhitePawnOnWhite & \BlackPawnOnWhite & \WhitePawnOnWhite & \WhitePawnOnWhite & \BlackPawnOnWhite & \BlackPawnOnWhite \\
\hline 
\end{tabularx}
\]

The $*$ and $\dagger$ on creation operators refer to \emph{adjoints}, which we discuss next. In the world of lattices and partial orders these are known as \emph{Galois connections}. (See \cite{Me10_Sutured_TQFT} for further details.)

\subsection{Adjoints}

It's standard notation in physics to write annihilation operators with an $a$ and creation operators as $a^*$ or $a^\dagger$. And these operators are supposed to be related: they are \emph{adjoint} with respect to the inner product.
\[
\langle a x | y \rangle = \langle x | a^* y \rangle,
\quad
\langle x | ay \rangle = \langle a^* x | y \rangle.
\]
In our case, the ``inner product'' isn't really an inner product because it's not symmetric. However it is bilinear and nondegenerate (check!). An operator $f$ on chessboards can have an adjoint --- but, in fact, \emph{two} adjoints. We'll write one of these as $f^*$ and one of them as $f^\dagger$, as shown below.
\[
\langle f x | y \rangle = \langle x | f^* y \rangle,
\quad
\langle x | fy \rangle = \langle f^\dagger x | y \rangle.
\]
Note this means that $f^{* \dagger} = f^{\dagger *} = f$. 

In general $f^{**} \neq f$. Some interesting things happen as we repeatedly take adjoints.

\subsection{Initial creation and annihilation are adjoint}

We can now see that our initial pawn creation $a_{p,0}^*$ is indeed the $*$-adjoint of the initial pawn annihilation $a_{p,0}$.
\[
\langle a_{p,0} x  | y \rangle = \langle x | a_{p,0}^* y \rangle
\]
Let's see why. Here $x$ and $y$ are chessboards. Note that if the leftmost square of $x$ is empty (contains an anti-pawn), so that $x = \begin{tabularx}{0.13\textwidth}{|X|X}
\hline
\BlackPawnOnWhite & $x_1 \cdots$ \\
\hline 
\end{tabularx}$, then
\[
\begin{array}{rcl}
\left\langle a_{p,0} x \quad | \quad y \right\rangle &=& \left\langle a_{p,0} \quad
\begin{tabularx}{0.13\textwidth}{|X|X}
\hline
\BlackPawnOnWhite & $x_1 \cdots$  \\
\hline 
\end{tabularx}
\quad | \quad y \right \rangle 
=
\langle 0 \quad | \quad y \rangle = 0 \\
\langle x \quad | \quad a_{p,0}^* y \rangle &=& \left\langle \quad
\begin{tabularx}{0.13\textwidth}{|X|X}
\hline
\BlackPawnOnWhite & $x_1 \cdots$ \\
\hline 
\end{tabularx}
\quad | \quad
\begin{tabularx}{0.13\textwidth}{|X|X}
\hline
\WhitePawnOnWhite & $y \cdots$ \\
\hline 
\end{tabularx}
\quad
\right\rangle
= 0.
\end{array}
\]
The first expression equals zero because we tried to annihilate an initial pawn where there was none and the universe returned an error. The second expression equals zero because a pawn would have to move backwards to get to the leftmost square.

On the other hand, if the leftmost square of $x$ contains a pawn, so that $x = \begin{tabularx}{0.13\textwidth}{|X|X}
\hline
\WhitePawnOnWhite & $x_1 \cdots$ \\
\hline 
\end{tabularx}$, then
\[
\begin{array}{rcl}
\left\langle a_{p,0} x \quad | \quad y \right\rangle &=& \left\langle a_{p,0} \quad
\begin{tabularx}{0.13\textwidth}{|X|X}
\hline
\WhitePawnOnWhite & $x_1 \cdots$  \\
\hline 
\end{tabularx}
\quad | \quad y \right\rangle 
=
\left\langle x_1 \quad | \quad y \right\rangle \\
\left\langle x \quad | \quad a_{p,0}^* y \right\rangle &=&
\left\langle \quad 
\begin{tabularx}{0.13\textwidth}{|X|X}
\hline
\WhitePawnOnWhite & $x_1 \cdots$ \\
\hline 
\end{tabularx} 
\quad | \quad
\begin{tabularx}{0.13\textwidth}{|X|X}
\hline
\WhitePawnOnWhite & $y \cdots$ \\
\hline 
\end{tabularx}
\quad
\right\rangle
=
\left\langle x_1 \quad | \quad y \right\rangle
\end{array}
\]
So we've shown that $a_{p,0}^*$ is indeed the $*$ adjoint of $a_{p,0}$.

To (try to) put it succinctly: $a_{p,0}^* y$ begins with a pawn, so $\langle x | a_{p,0}^* y \rangle$ is only nonzero if $x$ also begins with a pawn; in which case $a_{p,0} x$ removes that pawn so $\langle a_{p,0} x | y \rangle$ gives the same result.

It's not difficult to check by a similar argument that $a_{q,0}^\dagger$, the initial anti-pawn creation operator, is indeed the $\dagger$-adjoint of the initial anti-pawn annihilation operator $a_{q,0}$.

\subsection{Adjoints and adjoints and adjoints}

We have seen one adjoint $a_{p,0}^*$, and by definition $a_{p,0}^{* \dagger} = a_{p,0}$; but what happens if we take the $*$ adjoint twice? In other words, what operator $f$ satisfies
\[
\langle a_{p,0}^* x | y \rangle = \langle x | f y \rangle?
\]
Note $a_{p,0}^* x$ just puts a pawn at the left end of $x$, and we compare $a_{p,0}^* x$ to $y$. Clearly the newly-added first pawn can't move left. The real question to compute $\langle a_{p,0}^* x | y \rangle$ is what happens to all the other pawns. In fact, if you \emph{eliminate} the first pawn from the left in both $a_{p,0}^* x$ and $y$, you'll reduce to the problem of looking at the other pawns.

\[
\left\langle a_{p,0}^* \quad
\begin{tabularx}{0.33\textwidth}{|X|X|X|X|X|}
\hline
& \WhitePawnOnWhite & \WhitePawnOnWhite & & \\
\hline
\end{tabularx}
\quad | \quad 
\begin{tabularx}{0.4\textwidth}{|X|X|X|X|X|X|}
\hline
& \WhitePawnOnWhite & \WhitePawnOnWhite & & \WhitePawnOnWhite & \\
\hline
\end{tabularx}
\quad \right\rangle
\]
\[
= 
\left\langle \quad
\begin{tabularx}{0.4\textwidth}{|X|X|X|X|X|X|}
\hline
\textcolor{red}{\WhitePawnOnWhite} & & \WhitePawnOnWhite & \WhitePawnOnWhite & & \\
\hline
\end{tabularx}
\quad | \quad
\begin{tabularx}{0.4\textwidth}{|X|X|X|X|X|X|}
\hline
& \textcolor{red}{\WhitePawnOnWhite} & \WhitePawnOnWhite & & \WhitePawnOnWhite & \\
\hline
\end{tabularx}
\quad \right\rangle
\]
\[
= \left\langle \quad
\begin{tabularx}{0.33\textwidth}{|X|X|X|X|X|}
\hline
& \WhitePawnOnWhite & \WhitePawnOnWhite & & \\
\hline
\end{tabularx}
\quad | \quad
\begin{tabularx}{0.33\textwidth}{|X|X|X|X|X|}
\hline
& \WhitePawnOnWhite & & \WhitePawnOnWhite & \\
\hline
\end{tabularx}
\quad \right\rangle
\quad = \quad 1
\]
We conclude that $a_{p,0}^{* *}$ is the operator which \emph{annihilates the first pawn on the chessboard}, from left to right. We'll write this as $a_{p,1}$.

But why stop there? What is $a_{p,1}^*$? This is the operator $f$ which satisfies
\[
\langle a_{p,1} x | y \rangle = \langle x | fy \rangle.
\]
Given chessboards $x$ and $y$, you delete the first pawn from $x$, and compare $a_{p,1} x$ and $y$. You now want to come up with a chessboard $fy$, so that comparing the deleted chessboard $a_{p,1} x$ to $y$ always gives the same result as comparing $x$ to $fy$. Well, you certainly don't want to disturb the relative positions of the pawns. You just want to quietly slip a first pawn into $y$ in such a way that the first pawn of $x$ can easily move there. And the place to slip in that pawn is to \emph{double the first pawn in $y$}.
\[
\left\langle \quad a_{p,1} \quad
\begin{tabularx}{0.4\textwidth}{|X|X|X|X|X|X|X|}
\hline
& \WhitePawnOnWhite & & \WhitePawnOnWhite & & & \WhitePawnOnWhite \\
\hline
\end{tabularx}
\quad | \quad
\begin{tabularx}{0.33\textwidth}{|X|X|X|X|X|X|}
\hline
& & & \WhitePawnOnWhite & & \WhitePawnOnWhite \\
\hline
\end{tabularx}
\quad \right\rangle
\]
\[
= \left\langle \quad
\begin{tabularx}{0.33\textwidth}{|X|X|X|X|X|X|}
\hline
& & \WhitePawnOnWhite & & & \WhitePawnOnWhite \\
\hline
\end{tabularx}
\quad | \quad
\begin{tabularx}{0.33\textwidth}{|X|X|X|X|X|X|}
\hline
& & & \WhitePawnOnWhite & & \WhitePawnOnWhite \\
\hline
\end{tabularx}
\quad \right\rangle
\]
\[
= \left\langle \quad
\begin{tabularx}{0.4\textwidth}{|X|X|X|X|X|X|X|}
\hline
& \textcolor{green}{\WhitePawnOnWhite}  & & \WhitePawnOnWhite & & & \WhitePawnOnWhite \\
\hline
\end{tabularx}
\quad | \quad
\begin{tabularx}{0.4\textwidth}{|X|X|X|X|X|X|X|}
\hline
& & & \textcolor{green}{\WhitePawnOnWhite} & \WhitePawnOnWhite & & \WhitePawnOnWhite \\
\hline
\end{tabularx}
\quad \right\rangle
\]

\subsection{Round-up of adjoints}

We can continue in this fashion, computing more and more adjoints. Given what's above, it might not be too surprising to discover that the iterated adjoints of $a_{p,0}$ can be expressed as
\[
\begin{array}{cccccccccccccccccc}
a_{p,0} & \rightarrow & a_{p,0}^* & \rightarrow & a_{p,1} & \rightarrow & a_{p,1}^* & \rightarrow & a_{p,2} & \rightarrow & \cdots & a_{p,n_p} & a_{p,n_p}^* & \rightarrow & a_{p,\Omega} & \rightarrow & a_{p,\Omega}^*
\end{array}
\]
where the arrows represented the operation of taking the $*$-adjoint. The operators $a_{p,i}$ for $1 \leq i \leq n_p$ delete the $i$'th pawn. The operators $a_{p,i}^*$ double the $i$'th pawn. And at the end we obtain \emph{final} creation and annihilation operators, which we've denoted $a_{p,\Omega}^*$ and $a_{p,\Omega}$. These are to the right end of a chessboard what initial creation and annihilation operators are to the left end.

We can do just the same for the anti-pawn creation and annihilation operators. Drawing a similar diagram, with arrows representing the $*$ adjoint (the inverse of the $\dagger$ adjoint), we obtain something similar (although in a different direction).
\[
\begin{array}{cccccccccccccccccc}
a_{q,\Omega}^\dagger & \rightarrow & a_{q,\Omega} & \rightarrow & a_{q,n_q}^\dagger & \rightarrow & a_{q,n_q} & \rightarrow & \cdots & a_{q,2} & \rightarrow & a_{q,1}^\dagger & \rightarrow & a_{q,1} & \rightarrow a_{q,0}^\dagger & \rightarrow & a_{q,0}
\end{array}
\]
It follows that
\[
a_{p,0}^{*^{2n_p + 2}} = a_{p,\Omega} \quad \text{and}
\quad
a_{q,0}^{\dagger^{2n_q + 2}} = a_{q,\Omega}.
\]
But there's no reason to stop there; you can just keep taking adjoints. The description of these adjoints in terms of pawns, however, becomes more complicated.

It turns out that these adjoints are \emph{periodic}.
\begin{thm}
On a chessboard with $n$ squares, 
\[
a_{p,0}^{*^{2n+2}} = a_{p,0}, \quad a_{q,0}^{*^{2n+2}} = a_{q,0}.
\]
\end{thm}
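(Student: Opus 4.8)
The plan is to compute the entire cyclic sequence of iterated $*$-adjoints of $a_{p,0}$ and simply read off when it first returns. Its opening stretch is already in hand: the ``round-up'' list $a_{p,0}, a_{p,0}^*, a_{p,1}, a_{p,1}^*, \dots, a_{p,n_p}, a_{p,n_p}^*, a_{p,\Omega}, a_{p,\Omega}^*$, with each operator identified combinatorially. The first job is to push past $a_{p,\Omega}^*$. Here I would lean on the Dirac-sea symmetry: an $n$-square chessboard with $n_p$ pawns is also one with $n_q = n - n_p$ anti-pawns, and the ``inner product'' is simultaneously the reachability order for pawns sliding right and for anti-pawns sliding left. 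Translating through this dictionary, the ``more complicated'' adjoints beyond $a_{p,\Omega}^*$ acquire a clean description in anti-pawn language --- they are the anti-pawn analogues of the operators already listed, traversed in the opposite direction --- and continuing the chain through them returns to $a_{p,0}$ after exactly $2n_p + 2n_q + 2 = 2n+2$ steps. Concretely this is verified by fixing a chessboard, following where it (or its image under $a_{p,0}$) is sent by each successive adjoint, and checking the two agree after $2n+2$ applications.

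An equivalent and more mechanical route is linear-algebraic. Fix $n$ and the pawn number; let $L$ be the configuration lattice and $P$ its $\Z_2$ zeta matrix, so $P$ is unitriangular (in particular the pairing is nondegenerate, as asserted in the text). A short manipulation of the defining relation $\langle f x \mid y\rangle = \langle x \mid f^* y\rangle$ shows that, at the level of matrices, $a_{p,0}^{*^{2k}}$ is obtained from $a_{p,0}$ by $k$-fold conjugation (on the two sides) by the ``Coxeter-type'' operators $C := P^{-1}P^{T}$ of the source and target configuration lattices, while the odd-indexed adjoints are then forced by the even ones. Periodicity thus reduces to the order of $C$, and the heart of the matter is the assertion that $C^{\,n+1} = \mathrm{Id}$ over $\Z_2$ for the configuration lattice of any $n$-square board, together with the compatibility that makes $n+1$ the actual period for $a_{p,0}$ rather than merely a multiple of it.

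The main obstacle is precisely this order/periodicity claim. It cannot hold for arbitrary unitriangular $(0,1)$-matrices, so the argument must exploit the special shape of $L$: it is the distributive lattice of order ideals of a product of two chains --- equivalently, Young diagrams inside an $n_p \times n_q$ box, or monotone lattice paths across the board --- and it is through this structure that the ``$+1$'' enters (for instance via the $n+1$ places one can cut the board, or via the factorisation of $x^{n+1}-1$ over $\Z_2$, which in the totally ordered case $n_p=1$ makes $C$ literally the companion matrix of $(x^{n+1}-1)/(x-1)$ and yields $C^{\,n+1}=\mathrm{Id}$ at once). I would therefore dispatch the chain case $n_p = 1$ first, and then bootstrap to general $n_p$ using the pawn/anti-pawn bookkeeping above; this bootstrapping, keeping track of how the various intermediate operators interact, is the step demanding the most care. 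Finally the anti-pawn identity $a_{q,0}^{*^{2n+2}} = a_{q,0}$ needs no fresh work: the Dirac-sea involution interchanging pawns and anti-pawns --- reversing the board left-to-right and swapping the roles of $*$ and $\dagger$ --- carries the pawn statement to it.
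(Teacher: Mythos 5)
Your proposal does not close the theorem; in both of your routes the step that carries all the content is left as an assertion. In Route 1, the crucial claim is that the iterated $*$-adjoints beyond $a_{p,\Omega}^*$ ``are the anti-pawn analogues of the operators already listed, traversed in the opposite direction.'' That is exactly what the paper warns against: it says the pawn description of these further adjoints ``becomes more complicated,'' and its proof deliberately leaves the pawn language at this point. Indeed the claim cannot be literally true: transporting to chord diagrams, every iterated $*$-adjoint of $a_{p,0}$ is a $p$-type operator (it inserts a chord creating a white region, or closes off a black region), whereas the $q$-operators do the opposite (create black, close white), so the wrapped-around adjoints are never the operators $a_{q,j}$ or $a_{q,j}^\dagger$; at best they admit some other, genuinely more intricate description. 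Your fallback --- ``fix a chessboard, follow where it is sent by each successive adjoint, and check the two agree after $2n+2$ applications'' --- is not an argument but a restatement of the theorem, and it cannot even be carried out until the unknown intermediate operators are identified. The paper's actual proof sidesteps all of this: via the isomorphism of the chessboard pairing with the cylinder pairing on chord diagrams, $a_{p,0}$ becomes the operator closing off the points $0,1$ on a disc with $2n+2$ marked points, the finger-move relations $\langle a_{p,i}\Gamma_0|\Gamma_1\rangle=\langle \Gamma_0|a_{p,i}^*\Gamma_1\rangle$ and $\langle a_{p,i}^*\Gamma_0|\Gamma_1\rangle=\langle\Gamma_0|a_{p,i+1}\Gamma_1\rangle$ show that each $*$-adjoint just advances the site of the insertion/closure around the boundary circle, and periodicity is immediate because the marked points are counted modulo $2n+2$.

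Route 2 has the same shape of gap: the reduction $f^{**}=C f C^{-1}$ with $C=P^{-1}P^{T}$ is fine (with the two-sided bookkeeping you mention), but the statement $C^{\,n+1}=\mathrm{Id}$ over $\Z_2$ for the full configuration lattice of an $n$-square board is, as you yourself say, ``the heart of the matter,'' and you prove it only in the chain case $n_p=1$, leaving the bootstrap to general $n_p$ unspecified --- precisely the part where a proof is needed, and nothing in your sketch indicates how the pawn/anti-pawn bookkeeping would supply it. Finally, the symmetry you invoke for the second identity is slightly off: the board-reversing, colour-swapping involution conjugates $a_{p,0}$ to $a_{q,\Omega}$ (deletion of a \emph{final} anti-pawn), not to $a_{q,0}$, so it yields $a_{q,\Omega}^{*^{2n+2}}=a_{q,\Omega}$; transferring this to $a_{q,0}$ needs the additional (easy, but unstated) observation that periodicity propagates along an adjoint cycle, or else the same geometric argument applied directly to $a_{q,0}$.
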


We might say that ``$*^{2n+2} = 1$''. Our proof of this involves a little more geometry, by considering the combinatorics of chords on discs, as discussed in the next section.

The various creation and annihilation operators in different positions actually satisfy the relations of a \emph{simplicial set}: see \cite{Me10_Sutured_TQFT} for details.

\subsection{Chessboards and words}

By now ``chessboard notation'' is becoming somewhat unwieldy. In fact, we can denote any chessboard by a sequence of $p$'s and $q$'s, where a $p$ is a pawn and $q$ an anti-pawn. 
\[
qpqq
\quad
\leftrightarrow
\quad
\begin{tabularx}{0.3\textwidth}{|X|X|X|X|}
\hline
\BlackPawnOnWhite & \WhitePawnOnWhite & \BlackPawnOnWhite & \BlackPawnOnWhite \\
\hline 
\end{tabularx}
\]

\[
\langle pqppqq | qppqpq \rangle =
\]
\[
\left\langle \quad
\begin{tabularx}{0.4\textwidth}{|>{\centering}X|>{\centering}X|>{\centering}X|>{\centering}X|>{\centering}X|>{\centering}X|}
\hline
\WhitePawnOnWhite & & \WhitePawnOnWhite & \WhitePawnOnWhite & & \\
\hline
\end{tabularx}
\quad | \quad
\begin{tabularx}{0.4\textwidth}{|X|X|X|X|X|X|}
\hline
& \WhitePawnOnWhite & \WhitePawnOnWhite & & \WhitePawnOnWhite & \\
\hline
\end{tabularx}
\quad \right\rangle
\quad = \quad 1.
\]

\section{Chords on discs}

\subsection{Curves on a disc}

We now consider curves on discs and cylinders.

We consider a disc with $2n+2$ points marked on the boundary, which we number clockwise by integers modulo $2n+2$. The point numbered $0$ is considered a basepoint. We consider \emph{chord diagrams}, which are collections of non-intersecting curves joining the points up in pairs. We only consider them up to isotopy relative to the boundary.

Note that each chord in a chord diagram must connect points with opposite parity. We can shade the complementary regions of a chord diagram, so that each boundary interval $(2i, 2i+1)$ is shaded black, and each interval $(2i-1,2i)$ is shaded white.

\[
\begin{tikzpicture}[
scale=0.9, 
suture/.style={thick, draw=red},
boundary/.style={ultra thick},
vertex/.style={draw=red, fill=red}]

\coordinate [label = above:{$0$}] (12) at (90:2);
\coordinate [label = above right:{$1$}] (1) at (60:2);
\coordinate [label = above right:{$2$}] (2) at (30:2);
\coordinate [label = right:{$3$}] (3) at (0:2);
\coordinate [label = below right:{$4$}] (4) at (-30:2);
\coordinate [label = below right:{$5$}] (5) at (-60:2);
\coordinate [label = below:{$6$}] (6) at (-90:2);
\coordinate [label = below left:{$-5$}] (7) at (-120:2);
\coordinate [label = below left:{$-4$}] (8) at (-150:2);
\coordinate [label = left:{$-3$}] (9) at (-180:2);
\coordinate [label = above left:{$-2$}] (10) at (-210:2);
\coordinate [label = above left:{$-1$}] (11) at (-240:2);

\filldraw[fill=black!10!white, draw=none] (11) to [bend left=90] (10) arc (150:120:2) -- cycle;
\filldraw[fill=black!10!white, draw=none] (12) arc (90:60:2) to [bend right=90] (2) arc (30:0:2) to [bend right=15] (8) arc (210:180:2) to [bend right=45] (12);
\filldraw[fill=black!10!white, draw=none] (4) to [bend right=90] (5) arc (-60:-30:2);
\filldraw[fill=black!10!white, draw=none] (6) to [bend right=90] (7) arc (-120:-90:2);
\draw[suture] (9) to [bend right=45] (12);
\draw[suture] (11) to [bend left=90] (10);
\draw[suture] (1) to [bend right=90] (2);
\draw[suture] (3) to [bend right=15] (8);
\draw[suture] (4) to [bend right=90] (5);
\draw[suture] (6) to [bend right=90] (7);

\draw [boundary] (0,0) circle (2 cm);
\foreach \point in {1, 2, 3, 4, 5, 6, 7, 8, 9, 10, 11, 12}
\fill [vertex] (\point) circle (2pt);

\end{tikzpicture}
\]

Now we will declare that the effect of the \emph{creation operator} $a_{p,0}^*$ on a chord diagram is to insert a new chord between the points currently numbered $0$ and $1$; the new points are numbered $-1$ and $0$ and so the points on the left need to be renumbered. The existing chords remain in place but are pushed around the disc as shown.
\[
\begin{tikzpicture}[
scale=0.9, 
suture/.style={thick, draw=red},
boundary/.style={ultra thick},
vertex/.style={draw=red, fill=red}]

\draw (-4,0) node {$a_{p,0}^*$};

\draw [->] (72:2.5) -- (66:2.3);
\draw [->] (72:2.5) -- (78:2.3);

\coordinate [label = above:{$0$}] (0) at (90:2);
\coordinate [label = above right:{$1$}] (1) at (54:2);
\coordinate [label = above right:{$2$}] (2) at (18:2);
\coordinate [label = right:{$3$}] (3) at (-18:2);
\coordinate [label = below right:{$4$}] (4) at (-54:2);
\coordinate [label = below right:{$5$}] (5) at (-90:2);
\coordinate [label = below:{$-4$}] (6) at (-126:2);
\coordinate [label = below left:{$-3$}] (7) at (-162:2);
\coordinate [label = below left:{$-2$}] (8) at (162:2);
\coordinate [label = left:{$-1$}] (9) at (126:2);

\filldraw[fill=black!10!white, draw=none] (0) to [bend right=90] (1) arc (54:90:2);
\filldraw[fill=black!10!white, draw=none] (9) to [bend right=30] (2) arc (18:-18:2) -- (8) arc (162:126:2);
\filldraw[fill=black!10!white, draw=none] (4) to [bend right=90] (5) arc (-90:-54:2);
\filldraw[fill=black!10!white, draw=none] (6) to [bend right=90] (7) arc (-162:-126:2);
\draw[suture] (9) to [bend right=30] (2);
\draw[suture] (0) to [bend right=90] (1);
\draw[suture] (3) -- (8);
\draw[suture] (4) to [bend right=90] (5);
\draw[suture] (6) to [bend right=90] (7);

\draw [boundary] (0,0) circle (2 cm);
\foreach \point in {0, 1, 2, 3, 4, 5, 6, 7, 8, 9}
\fill [vertex] (\point) circle (2pt);

\draw (4,0) node {$=$};

\coordinate [label = above:{$0$}] (12b) at ($ (90:2) + (8,0) $);
\coordinate [label = above right:{$1$}] (1b) at ($ (60:2) + (8,0) $);
\coordinate [label = above right:{$2$}] (2b) at ($ (30:2) + (8,0) $);
\coordinate [label = right:{$3$}] (3b) at ($ (0:2) + (8,0) $);
\coordinate [label = below right:{$4$}] (4b) at ($ (-30:2) + (8,0) $);
\coordinate [label = below right:{$5$}] (5b) at ($ (-60:2) + (8,0) $);
\coordinate [label = below:{$6$}] (6b) at ($ (-90:2) + (8,0) $);
\coordinate [label = below left:{$-5$}] (7b) at ($ (-120:2) + (8,0) $);
\coordinate [label = below left:{$-4$}] (8b) at ($ (-150:2) + (8,0) $);
\coordinate [label = left:{$-3$}] (9b) at ($ (-180:2) + (8,0) $);
\coordinate [label = above left:{$-2$}] (10b) at ($ (-210:2) + (8,0) $);
\coordinate [label = above left:{$-1$}] (11b) at ($ (-240:2) + (8,0) $);

\filldraw[fill=black!10!white, draw=none] (12b) arc (90:60:2) to [bend left=45] (10b) arc (150:120:2) to [bend right=90] (12b);
\filldraw[fill=black!10!white, draw=none] (2b) arc (30:0:2) to [bend right=15] (8b) arc (210:180:2) to [bend right=15] (2b);
\filldraw[fill=black!10!white, draw=none] (4b) to [bend right=90] (5b) arc (-60:-30:2);
\filldraw[fill=black!10!white, draw=none] (6b) to [bend right=90] (7b) arc (-120:-90:2);
\draw[suture] (9b) to [bend right=15] (2b);
\draw[suture] (1b) to [bend left=45] (10b);
\draw[suture] (11b) to [bend right=90] (12b);
\draw[suture] (3b) to [bend right=15] (8b);
\draw[suture] (4b) to [bend right=90] (5b);
\draw[suture] (6b) to [bend right=90] (7b);

\draw [boundary] (8,0) circle (2 cm);
\foreach \point in {1b, 2b, 3b, 4b, 5b, 6b, 7b, 8b, 9b, 10b, 11b, 12b}
\fill [vertex] (\point) circle (2pt);

\end{tikzpicture}
\]

More generally, we will declare that the creation operator $a_{p,i}^*$  inserts a new chord between the points currently labelled $-2i$ and $-2i+1$; the new points are labelled $-2i-1$ and $-2i$, and points are renumbered accordingly,

For example
\[
\begin{tikzpicture}[
scale=0.9, 
suture/.style={thick, draw=red},
boundary/.style={ultra thick},
vertex/.style={draw=red, fill=red}]

\draw (-4,0) node {$a_{p,1}^*$};

\draw [->] (144:2.5) -- (138:2.3);
\draw [->] (144:2.5) -- (150:2.3);

\coordinate [label = above:{$0$}] (0) at (90:2);
\coordinate [label = above right:{$1$}] (1) at (54:2);
\coordinate [label = above right:{$2$}] (2) at (18:2);
\coordinate [label = right:{$3$}] (3) at (-18:2);
\coordinate [label = below right:{$4$}] (4) at (-54:2);
\coordinate [label = below right:{$5$}] (5) at (-90:2);
\coordinate [label = below:{$-4$}] (6) at (-126:2);
\coordinate [label = below left:{$-3$}] (7) at (-162:2);
\coordinate [label = below left:{$-2$}] (8) at (162:2);
\coordinate [label = left:{$-1$}] (9) at (126:2);

\filldraw[fill=black!10!white, draw=none] (0) to [bend right=90] (1) arc (54:90:2);
\filldraw[fill=black!10!white, draw=none] (9) to [bend right=30] (2) arc (18:-18:2) -- (8) arc (162:126:2);
\filldraw[fill=black!10!white, draw=none] (4) to [bend right=90] (5) arc (-90:-54:2);
\filldraw[fill=black!10!white, draw=none] (6) to [bend right=90] (7) arc (-162:-126:2);
\draw[suture] (9) to [bend right=30] (2);
\draw[suture] (0) to [bend right=90] (1);
\draw[suture] (3) -- (8);
\draw[suture] (4) to [bend right=90] (5);
\draw[suture] (6) to [bend right=90] (7);

\draw [boundary] (0,0) circle (2 cm);
\foreach \point in {0, 1, 2, 3, 4, 5, 6, 7, 8, 9}
\fill [vertex] (\point) circle (2pt);

\draw (4,0) node {$=$};

\coordinate [label = above:{$0$}] (12b) at ($ (90:2) + (8,0) $);
\coordinate [label = above right:{$1$}] (1b) at ($ (60:2) + (8,0) $);
\coordinate [label = above right:{$2$}] (2b) at ($ (30:2) + (8,0) $);
\coordinate [label = right:{$3$}] (3b) at ($ (0:2) + (8,0) $);
\coordinate [label = below right:{$4$}] (4b) at ($ (-30:2) + (8,0) $);
\coordinate [label = below right:{$5$}] (5b) at ($ (-60:2) + (8,0) $);
\coordinate [label = below:{$6$}] (6b) at ($ (-90:2) + (8,0) $);
\coordinate [label = below left:{$-5$}] (7b) at ($ (-120:2) + (8,0) $);
\coordinate [label = below left:{$-4$}] (8b) at ($ (-150:2) + (8,0) $);
\coordinate [label = left:{$-3$}] (9b) at ($ (-180:2) + (8,0) $);
\coordinate [label = above left:{$-2$}] (10b) at ($ (-210:2) + (8,0) $);
\coordinate [label = above left:{$-1$}] (11b) at ($ (-240:2) + (8,0) $);

\filldraw[fill=black!10!white, draw=none] (12b) arc (90:60:2) to [bend left=90] (12b);
\filldraw[fill=black!10!white, draw=none] (2b) arc (30:0:2) to [bend right=15] (8b) arc (210:180:2) to [bend right=90] (10b) arc (150:120:2) to [bend right=45] (2b);
\filldraw[fill=black!10!white, draw=none] (4b) to [bend right=90] (5b) arc (-60:-30:2);
\filldraw[fill=black!10!white, draw=none] (6b) to [bend right=90] (7b) arc (-120:-90:2);
\draw[suture] (9b) to [bend right=90] (10b);
\draw[suture] (1b) to [bend left=90] (12b);
\draw[suture] (11b) to [bend right=45] (2b);
\draw[suture] (3b) to [bend right=15] (8b);
\draw[suture] (4b) to [bend right=90] (5b);
\draw[suture] (6b) to [bend right=90] (7b);

\draw [boundary] (8,0) circle (2 cm);
\foreach \point in {1b, 2b, 3b, 4b, 5b, 6b, 7b, 8b, 9b, 10b, 11b, 12b}
\fill [vertex] (\point) circle (2pt);

\end{tikzpicture}
\]

We will declare the effect of the \emph{annihilation operator} $a_{p,0}$ on a chord diagram to ``close off'' the points $0$ and $1$ by joining them with a chord, and pushing this into the disc. Points numbered $3, 4, \ldots$ retain their numbering; the points labelled $-1, -2, \ldots$ have their label increased by $2$. We obtain a diagram with two fewer points on the boundary. It might have a closed curve as well as chords.

For example,
\[
\begin{tikzpicture}[
scale=0.9, 
suture/.style={thick, draw=red},
boundary/.style={ultra thick},
vertex/.style={draw=red, fill=red}]

\draw (-4,0) node {$a_{p,0}$};

\coordinate [label = above:{$0$}] (12) at (90:2);
\coordinate [label = above right:{$1$}] (1) at (60:2);
\coordinate [label = above right:{$2$}] (2) at (30:2);
\coordinate [label = right:{$3$}] (3) at (0:2);
\coordinate [label = below right:{$4$}] (4) at (-30:2);
\coordinate [label = below right:{$5$}] (5) at (-60:2);
\coordinate [label = below:{$6$}] (6) at (-90:2);
\coordinate [label = below left:{$-5$}] (7) at (-120:2);
\coordinate [label = below left:{$-4$}] (8) at (-150:2);
\coordinate [label = left:{$-3$}] (9) at (-180:2);
\coordinate [label = above left:{$-2$}] (10) at (-210:2);
\coordinate [label = above left:{$-1$}] (11) at (-240:2);

\filldraw[fill=black!10!white, draw=none] (12) arc (90:60:2) to [bend left=90] (12);
\filldraw[fill=black!10!white, draw=none] (2) arc (30:0:2) to [bend right=15] (8) arc (210:180:2) to [bend right=90] (10) arc (150:120:2) to [bend right=45] (2);
\filldraw[fill=black!10!white, draw=none] (4) to [bend right=90] (5) arc (-60:-30:2);
\filldraw[fill=black!10!white, draw=none] (6) to [bend right=90] (7) arc (-120:-90:2);
\draw[suture] (9) to [bend right=90] (10);
\draw[suture] (1) to [bend left=90] (12);
\draw[suture] (11) to [bend right=45] (2);
\draw[suture] (3) to [bend right=15] (8);
\draw[suture] (4) to [bend right=90] (5);
\draw[suture] (6) to [bend right=90] (7);
\draw[red, ultra thick, dotted] (12) to [bend left=90] (1);

\draw [boundary] (0,0) circle (2 cm);
\foreach \point in {1, 2, 3, 4, 5, 6, 7, 8, 9, 10, 11, 12}
\fill [vertex] (\point) circle (2pt);

\draw (4,0) node {$=$};

\coordinate [label = above:{$0$}] (0b) at ($ (90:2) + (8,0) $);
\coordinate [label = above right:{$1$}] (1b) at ($ (54:2) + (8,0) $);
\coordinate [label = above right:{$2$}] (2b) at ($ (18:2) + (8,0) $);
\coordinate [label = right:{$3$}] (3b) at ($ (-18:2) + (8,0) $);
\coordinate [label = below right:{$4$}] (4b) at ($ (-54:2) + (8,0) $);
\coordinate [label = below right:{$5$}] (5b) at ($ (-90:2) + (8,0) $);
\coordinate [label = below:{$-4$}] (6b) at ($ (-126:2) + (8,0) $);
\coordinate [label = below left:{$-3$}] (7b) at ($ (-162:2) + (8,0) $);
\coordinate [label = below left:{$-2$}] (8b) at ($ (162:2) + (8,0) $);
\coordinate [label = left:{$-1$}] (9b) at ($ (126:2) + (8,0) $);

\filldraw[fill=black!10!white, draw=none] (2b) arc (18:-18:2) -- (8b) arc (162:126:2) to [bend right=90] (0b) arc (90:54:2) to [bend right=90] (2b);
\filldraw[fill=black!10!white, draw=none] (4b) to [bend right=90] (5b) arc (-90:-54:2);
\filldraw[fill=black!10!white, draw=none] (6b) to [bend right=90] (7b) arc (-162:-126:2);
\filldraw[fill=black!10!white, draw=red] ($ (36:1.8) + (8,0) $) circle (0.1);
\draw[suture] (0b) to [bend left=90] (9b);
\draw[suture] (1b) to [bend right=90] (2b);
\draw[suture] (3b) -- (8b);
\draw[suture] (4b) to [bend right=90] (5b);
\draw[suture] (6b) to [bend right=90] (7b);

\draw [boundary] (8,0) circle (2 cm);

\foreach \point in {0b, 1b, 2b, 3b, 4b, 5b, 6b, 7b, 8b, 9b}
\fill [vertex] (\point) circle (2pt);

\end{tikzpicture}
\]

More generally, we declare that the annihilation operator $a_{p,i}$ closes off the points $-2i$ and $-2i+1$, and relabels points: the points $-2i+2, -2i+3, \ldots$ retain their numbering but the points $-2i-1, -2i-2, \ldots$ have their label increased by $2$. Again we might see a closed curve as a result.

When $i=0$, both $a_{p,i}^*$ and $a_{p,i}$ perform operations near the basepoint labelled $0$. When $i$ increases by $1$, those operations occur $2$ spots anticlockwise.

We will also declare creation operators $a_{q,i}^\dagger$ and annihilation operators $a_{q,i}$. These have a similar effect but on the other side of the diagram. We declare $a_{q,i}^\dagger$ inserts a new chord between the points labelled $2i-1$ and $2i$; and we declare that $a_{q,i}$ closes off the points $2i-1$ and $2i$. So $a_{q,0}$ and $a_{q,0}^\dagger$ perform operations near the basepoint, and when $i$ increases by $1$, those operations occur $2$ spots clockwise.

\[
\begin{tabular}{|c|c|}
\hline
\begin{tikzpicture}[
scale=0.9, 
suture/.style={thick, draw=red},
boundary/.style={ultra thick},
vertex/.style={draw=red, fill=red}]

\draw (-4,0) node {$a_{p,i}^*$};

\coordinate [label = left:{$-2i+1$}] (a) at (144:2);
\coordinate [label = left:{$-2i$}] (b) at (216:2);
\filldraw[fill=black!10!white, draw=none] (a) -- (-1,0.8) -- (-1,-0.8) -- (b) arc (216:144:2);
\draw[suture] (a) -- (-1,0.8);
\draw[suture] (b) -- (-1,-0.8);
\draw [boundary](-1,1.732) arc (120:240:2);

\draw (0,0) node {$=$};

\coordinate [label = left:{$-2i+1$}] (c) at ($ (144:2) + (4,0) $);
\coordinate [label = left:{$-2i$}] (d) at ($ (168:2) + (4,0) $);
\coordinate [label = left:{$-2i-1$}] (e) at ($ (192:2) + (4,0) $);
\coordinate [label = left:{$-2i-2$}] (f) at ($ (216:2) + (4,0) $);
\filldraw[fill=black!10!white, draw=none] (c) -- (3,0.8) -- (3,-0.8) -- (f) arc (216:192:2) to [bend right=90] (d) arc (168:144:2);
\draw [suture] (c) -- (3,0.8);
\draw [suture] (d) to [bend left=90] (e);
\draw [suture] (f) -- (3,-0.8);
\draw [boundary](3,1.732) arc (120:240:2);

\foreach \point in {a, b, c, d, e, f}
\fill [vertex] (\point) circle (2pt);
\end{tikzpicture}
&
\begin{tikzpicture}[
scale=0.9, 
suture/.style={thick, draw=red},
boundary/.style={ultra thick},
vertex/.style={draw=red, fill=red}]

\draw (-4,0) node {$a_{p,i}$};

\coordinate [label = left:{$-2i+2$}] (a) at (144:2);
\coordinate [label = left:{$-2i+1$}] (b) at (168:2);
\coordinate [label = left:{$-2i$}] (c) at (192:2);
\coordinate [label = left:{$-2i-1$}] (d) at (216:2);
\filldraw[fill=black!10!white, draw=none] (a) -- (-1,1) -- (-1,1.732) arc (120:144:2);
\filldraw[fill=black!10!white, draw=none] (b) -- (-1,0.4) -- (-1,-0.4) -- (c) arc (192:168:2);
\filldraw[fill=black!10!white, draw=none] (d) -- (-1,-1) -- (-1,-1.732) arc (240:216:2);
\draw[suture] (a) -- (-1,1);
\draw[suture] (b) -- (-1,0.4);
\draw[suture] (c) -- (-1,-0.4);
\draw[suture] (d) -- (-1,-1);
\draw [boundary](-1,1.732) arc (120:240:2);

\draw (0,0) node {$=$};

\coordinate [label = left:{$-2i+2$}] (e) at ($ (144:2) + (4,0) $);
\coordinate [label = left:{$-2i+1$}] (f) at ($ (216:2) + (4,0) $);
\filldraw[fill=black!10!white, draw=none] (e) -- (3,1) -- (3,1.732) arc (120:144:2);
\filldraw[fill=black!10!white, draw=none] (f) -- (3,-1) -- (3,-1.732) arc (240:216:2);
\filldraw[fill=black!10!white, draw=none] (3,0.4) .. controls (2.5,0.4) and (2.4,0.5) .. (2.4,0) .. controls (2.4,-0.5) and (2.5,-0.4) .. (3,-0.4);
\draw [suture] (e) -- (3,1);
\draw [suture] (f) -- (3,-1);
\draw [suture] (3,0.4) .. controls (2.5,0.4) and (2.4,0.5) .. (2.4,0) .. controls (2.4,-0.5) and (2.5,-0.4) .. (3,-0.4);
\draw [boundary](3,1.732) arc (120:240:2);

\foreach \point in {a, b, c, d, e, f}
\fill [vertex] (\point) circle (2pt);
\end{tikzpicture}
\\
\hline
\begin{tikzpicture}[
scale=0.9, 
suture/.style={thick, draw=red},
boundary/.style={ultra thick},
vertex/.style={draw=red, fill=red}]

\draw (-3,0) node {$a_{q,i}^\dagger$};

\coordinate [label = right:{$2i-1$}] (a) at ($ (36:2) + (-3,0) $);
\coordinate [label = right:{$2i$}] (b) at ($ (-36:2) + (-3,0) $);
\filldraw[fill=black!10!white, draw=none] (a) -- (-2,0.8) -- (-2,1.732) arc (60:36:2);
\filldraw[fill=black!10!white, draw=none] (b) -- (-2,-0.8) -- (-2,-1.732) arc (-60:-36:2);
\draw[suture] (a) -- (-2,0.8);
\draw[suture] (b) -- (-2,-0.8);
\draw [boundary](-2,1.732) arc (60:-60:2);

\draw (1,0) node {$=$};

\coordinate [label = right:{$2i-1$}] (c) at ($ (36:2) + (1,0) $);
\coordinate [label = right:{$2i$}] (d) at ($ (12:2) + (1,0) $);
\coordinate [label = right:{$2i+1$}] (e) at ($ (-12:2) + (1,0) $);
\coordinate [label = right:{$2i+2$}] (f) at ($ (-36:2) + (1,0) $);
\filldraw[fill=black!10!white, draw=none] (c) -- (2,0.8) -- (2,1.732);
\filldraw[fill=black!10!white, draw=none] (f) -- (2,-0.8) -- (2,-1.732);
\filldraw[fill=black!10!white, draw=none] (d) to [bend right=90] (e); 
\draw [suture] (c) -- (2,0.8);
\draw [suture] (d) to [bend right=90] (e);
\draw [suture] (f) -- (2,-0.8);
\draw [boundary](2,1.732) arc (60:-60:2);

\foreach \point in {a, b, c, d, e, f}
\fill [vertex] (\point) circle (2pt);
\end{tikzpicture}
&
\begin{tikzpicture}[
scale=0.9, 
suture/.style={thick, draw=red},
boundary/.style={ultra thick},
vertex/.style={draw=red, fill=red}]

\draw (-3,0) node {$a_{q,i}$};

\coordinate [label = right:{$2i-2$}] (a) at ($ (36:2) + (-3,0) $);
\coordinate [label = right:{$2i-1$}] (b) at ($ (12:2) + (-3,0) $);
\coordinate [label = right:{$2i$}] (c) at ($ (-12:2) + (-3,0) $);
\coordinate [label = right:{$2i+1$}] (d) at ($ (-36:2) + (-3,0) $);
\filldraw[fill=black!10!white, draw=none] (-2,1) -- (a) arc (36:12:2) -- (-2,0.4) -- cycle;
\filldraw[fill=black!10!white, draw=none] (-2,-0.4) -- (c) arc (-12:-36:2) -- (-2,-1) -- cycle;
\draw[suture] (a) -- (-2,1);
\draw[suture] (b) -- (-2,0.4);
\draw[suture] (c) -- (-2,-0.4);
\draw[suture] (d) -- (-2,-1);
\draw [boundary](-2,1.732) arc (60:-60:2);

\draw (1,0) node {$=$};

\coordinate [label = right:{$2i-2$}] (e) at ($ (36:2) + (1,0) $);
\coordinate [label = right:{$2i-1$}] (f) at ($ (-36:2) + (1,0) $);
\filldraw[fill=black!10!white, draw=none] (2,0.4) .. controls (2.5,0.4) and (2.6,0.5) .. (2.6,0) .. controls (2.6,-0.5) and (2.5,-0.4) .. (2,-0.4) -- (2,-1) -- (f) arc (-36:36:2) -- (2,1) -- cycle;
\draw [suture] (e) -- (2,1);
\draw [suture] (f) -- (2,-1);
\draw [suture] (2,0.4) .. controls (2.5,0.4) and (2.6,0.5) .. (2.6,0) .. controls (2.6,-0.5) and (2.5,-0.4) .. (2,-0.4);
\draw [boundary](2,1.732) arc (60:-60:2);

\foreach \point in {a, b, c, d, e, f}
\fill [vertex] (\point) circle (2pt);
\end{tikzpicture}
\\ 
\hline
\end{tabular}
\]

Note that the $p$-creation operator always creates a new white region, and the $q$-creation operator always creates a new black region. Also, the $p$-annihilation operator always closes off a black region, while the $q$-annihilation operator always closes off a white region. The similarity to pawn colours is not coincidental.

\subsection{Diagrams of chessboards}

We'll call the simplest possible chord diagram, with one chord, \emph{the vacuum} $\Gamma_\emptyset$.

\begin{center}

\begin{tikzpicture}[
scale=0.9, 
suture/.style={thick, draw=red},
boundary/.style={ultra thick},
vertex/.style={draw=red, fill=red}]

\coordinate [label = above:{$0$}] (0) at (90:1);
\coordinate [label = below:{$1$}] (1) at (-90:1);
\filldraw[fill=black!10!white, draw=none] (0) arc (90:-90:1) -- cycle;
\draw [boundary] (0,0) circle (1 cm);
\draw [suture] (0) -- (1);
\foreach \point in {0, 1}
\fill [vertex] (\point) circle (2pt);

\end{tikzpicture}

\end{center}

Now for any word $w$ in the letters $p$ and $q$, we can apply a corresponding sequence of initial creation operators $a_{p,0}^*$ and $a_{q,0}^\dagger$ to the vacuum chord diagram $\Gamma_\emptyset$ to get a chord diagram $\Gamma_w$ for the word $w$.

For example, for the word $w = qpqq$ we obtain
\[
\begin{tikzpicture}[
scale=0.9, 
suture/.style={thick, draw=red},
boundary/.style={ultra thick},
vertex/.style={draw=red, fill=red}]
\coordinate [label = above:{$0$}] (0) at (90:1);
\coordinate [label = below:{$1$}] (1) at (-90:1);
\filldraw[fill=black!10!white, draw=none] (0) arc (90:-90:1) -- cycle;
\draw [boundary] (0,0) circle (1 cm);
\draw [suture] (0) -- (1);
\draw (-7,0) node {$\Gamma_{qpqq} \quad = \quad a_{q,0}^\dagger a_{p,0}^* a_{q,0}^\dagger a_{q,0}^\dagger \quad \Gamma_\emptyset \quad = \quad a_{q,0}^\dagger a_{p,0}^* a_{q,0}^\dagger a_{q,0}^\dagger$};
\foreach \point in {0, 1}
\fill [vertex] (\point) circle (2pt);
\end{tikzpicture}
\]
\[
\begin{tikzpicture}[
scale=0.9, 
suture/.style={thick, draw=red},
boundary/.style={ultra thick},
vertex/.style={draw=red, fill=red}]
\coordinate [label = above:{$0$}] (0) at (90:1);
\coordinate [label = right:{$1$}] (1) at (0:1);
\coordinate [label = below:{$2$}] (2) at (-90:1);
\coordinate [label = left:{$-1$}] (3) at (180:1);
\filldraw[fill=black!10!white, draw=none] (0) to [bend right=45] (1) arc (0:90:1);
\filldraw[fill=black!10!white, draw=none] (2) to [bend right=45] (3) arc (-180:-90:1);
\draw [boundary] (0,0) circle (1 cm);
\draw [suture] (0) to [bend right=45] (1);
\draw [suture] (2) to [bend right=45] (3);
\draw (-4,0) node {$= a_{q,0}^\dagger a_{p,0}^* a_{q,0}^\dagger $};
\foreach \point in {0,1,2,3}
\fill [vertex] (\point) circle (2pt);
\end{tikzpicture}
\begin{tikzpicture}[
scale=0.9, 
suture/.style={thick, draw=red},
boundary/.style={ultra thick},
vertex/.style={draw=red, fill=red}]
\coordinate [label = above:{$0$}] (0) at (90:1);
\coordinate [label = above right:{$1$}] (1) at (30:1);
\coordinate [label = below right:{$2$}] (2) at (-30:1);
\coordinate [label = below:{$3$}] (3) at (-90:1);
\coordinate [label = below left:{$-2$}] (4) at (-150:1);
\coordinate [label = above left:{$-1$}] (5) at (150:1);
\filldraw[fill=black!10!white, draw=none] (0) to [bend right=90] (1) arc (30:90:1);
\filldraw[fill=black!10!white, draw=none] (2) to [bend right=90] (3) arc (-90:0:1);
\filldraw[fill=black!10!white, draw=none] (4) to [bend right=90] (5) arc (150:210:1);
\draw [boundary] (0,0) circle (1 cm);
\draw [suture] (0) to [bend right=90] (1);
\draw [suture] (2) to [bend right=90] (3);
\draw [suture] (4) to [bend right=90] (5);
\draw (-4,0) node {$\quad = \quad a_{q,0}^\dagger a_{p,0}^*$};
\foreach \point in {0,1,2,3,4,5}
\fill [vertex] (\point) circle (2pt);
\end{tikzpicture}
\]
\[
\begin{tikzpicture}[
scale=0.9, 
suture/.style={thick, draw=red},
boundary/.style={ultra thick},
vertex/.style={draw=red, fill=red}]
\coordinate [label = above:{$0$}] (0) at (90:1.5);
\coordinate [label = above right:{$1$}] (1) at (45:1.5);
\coordinate [label = right:{$2$}] (2) at (0:1.5);
\coordinate [label = below right:{$3$}] (3) at (-45:1.5);
\coordinate [label = below:{$4$}] (4) at (-90:1.5);
\coordinate [label = below left:{$-3$}] (5) at (-135:1.5);
\coordinate [label = left:{$-2$}] (6) at (-180:1.5);
\coordinate [label = above left:{$-1$}] (7) at (-225:1.5);
\filldraw[fill=black!10!white, draw=none] (0) arc (90:45:1.5) to [bend left=15] (6) arc (180:135:1.5) to [bend right=90] (0);
\filldraw[fill=black!10!white, draw=none] (2) to [bend right=90] (3) arc (-45:0:1.5);
\filldraw[fill=black!10!white, draw=none] (4) to [bend right=90] (5) arc (-135:-90:1.5);
\draw [boundary] (0,0) circle (1.5 cm);
\draw [suture] (0) to [bend left=90] (7);
\draw [suture] (6) to [bend right=15] (1);
\draw [suture] (2) to [bend right=90] (3);
\draw [suture] (4) to [bend right=90] (5);
\draw (-3,0) node {$= \quad a_{q,0}^\dagger$};
\foreach \point in {0,1,2,3,4,5,6,7}
\fill [vertex] (\point) circle (2pt);
\end{tikzpicture}
\begin{tikzpicture}[
scale=0.9, 
suture/.style={thick, draw=red},
boundary/.style={ultra thick},
vertex/.style={draw=red, fill=red}]
\coordinate [label = above:{$0$}] (0) at (90:2);
\coordinate [label = above right:{$1$}] (1) at (54:2);
\coordinate [label = above right:{$2$}] (2) at (18:2);
\coordinate [label = right:{$3$}] (3) at (-18:2);
\coordinate [label = below right:{$4$}] (4) at (-54:2);
\coordinate [label = below right:{$5$}] (5) at (-90:2);
\coordinate [label = below:{$-4$}] (6) at (-126:2);
\coordinate [label = below left:{$-3$}] (7) at (-162:2);
\coordinate [label = below left:{$-2$}] (8) at (162:2);
\coordinate [label = left:{$-1$}] (9) at (126:2);
\filldraw[fill=black!10!white, draw=none] (0) to [bend right=90] (1) arc (54:90:2);
\filldraw[fill=black!10!white, draw=none] (9) to [bend right=30] (2) arc (18:-18:2) -- (8) arc (162:126:2);
\filldraw[fill=black!10!white, draw=none] (4) to [bend right=90] (5) arc (-90:-54:2);
\filldraw[fill=black!10!white, draw=none] (6) to [bend right=90] (7) arc (-162:-126:2);
\draw[suture] (9) to [bend right=30] (2);
\draw[suture] (0) to [bend right=90] (1);
\draw[suture] (3) -- (8);
\draw[suture] (4) to [bend right=90] (5);
\draw[suture] (6) to [bend right=90] (7);
\draw (-4,0) node {$\quad = \quad$};
\draw [boundary] (0,0) circle (2 cm);
\foreach \point in {0, 1, 2, 3, 4, 5, 6, 7, 8, 9}
\fill [vertex] (\point) circle (2pt);
\end{tikzpicture}
\]

Now a word $w$ in $p$ and $q$ corresponds to a chessboard, where each $p$ stands for a pawn and each $q$ stands for an anti-pawn. So we can say that the chessboard $w$ has chord diagram $\Gamma_w$.

\[
\begin{tikzpicture}[
scale=0.9, 
suture/.style={thick, draw=red},
boundary/.style={ultra thick},
vertex/.style={draw=red, fill=red}]
\coordinate [label = above:{$0$}] (0) at (90:2);
\coordinate [label = above right:{$1$}] (1) at (54:2);
\coordinate [label = above right:{$2$}] (2) at (18:2);
\coordinate [label = right:{$3$}] (3) at (-18:2);
\coordinate [label = below right:{$4$}] (4) at (-54:2);
\coordinate [label = below right:{$5$}] (5) at (-90:2);
\coordinate [label = below:{$-4$}] (6) at (-126:2);
\coordinate [label = below left:{$-3$}] (7) at (-162:2);
\coordinate [label = below left:{$-2$}] (8) at (162:2);
\coordinate [label = left:{$-1$}] (9) at (126:2);
\filldraw[fill=black!10!white, draw=none] (0) to [bend right=90] (1) arc (54:90:2);
\filldraw[fill=black!10!white, draw=none] (9) to [bend right=30] (2) arc (18:-18:2) -- (8) arc (162:126:2);
\filldraw[fill=black!10!white, draw=none] (4) to [bend right=90] (5) arc (-90:-54:2);
\filldraw[fill=black!10!white, draw=none] (6) to [bend right=90] (7) arc (-162:-126:2);
\draw[suture] (9) to [bend right=30] (2);
\draw[suture] (0) to [bend right=90] (1);
\draw[suture] (3) -- (8);
\draw[suture] (4) to [bend right=90] (5);
\draw[suture] (6) to [bend right=90] (7);
\draw (-8,0) node {
$qpqq \quad 
\leftrightarrow \quad 
\begin{tabularx}{0.3\textwidth}{|X|X|X|X|}
\hline
\BlackPawnOnWhite & \WhitePawnOnWhite & \BlackPawnOnWhite & \BlackPawnOnWhite \\
\hline 
\end{tabularx}
\quad
\leftrightarrow \quad$};
\draw [boundary] (0,0) circle (2 cm);
\foreach \point in {0, 1, 2, 3, 4, 5, 6, 7, 8, 9}
\fill [vertex] (\point) circle (2pt);
\end{tikzpicture}
\]

It turns out that the creation and annihilation perators $a_{p,i}, a_{p,i}^*, a_{q,i}, a_{q,i}^\dagger$ act on chessboards and chord diagrams coherently.

\begin{prop}
For any chessboard/word $w$,
\[
\Gamma_{a_{p,i}^* w} = a_{p,i}^* \Gamma_w.
\]
In other words, the following diagram commutes.
\[
\begin{tikzpicture}
\draw (0,0) node {$w$};
\draw (3,0) node {$\Gamma_w$};
\draw (0,-2) node {$a_{p,i}^* w$};
\draw (3,-2) node {$\Gamma_{a_{p,i}^* w}$};
\draw [->,decorate, decoration={snake,amplitude=.4mm,segment length=2mm,post length=1mm}] (0.5,0) -- (2.5,0)
	node [above, align=center, midway] {Draw\\diagram};
\draw [->,decorate, decoration={snake,amplitude=.4mm,segment length=2mm,post length=1mm}] (0.5,-2) -- (2.5,-2)
	node [above, align=center, midway] {Draw\\diagram};
\draw [->] (0,-0.4) -- (0,-1.6)
	node [left, align=center, midway] {Create\\pawn};
\draw [->] (3,-0.4) -- (3,-1.6) node [right, align=center, midway] {Create\\chord};
\end{tikzpicture}
\]
\end{prop}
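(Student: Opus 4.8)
The plan is to bootstrap from the \emph{definition} of $\Gamma_w$ --- namely that, writing $w = c_1 c_2 \cdots c_n$ with each $c_j \in \{p,q\}$, the diagram $\Gamma_w$ is produced from the vacuum diagram $\Gamma_\emptyset$ by applying the initial creation operators, one $a_{p,0}^*$ for each letter $p$ and one $a_{q,0}^\dagger$ for each letter $q$ --- by an induction on the word length $|w|$. The case $i=0$ of the proposition is then nothing but this definition: prepending the letter $p$ to $w$ is precisely what $a_{p,0}^*$ does to a word, and $\Gamma_{pw} = a_{p,0}^* \Gamma_w$ by construction. So I would take the case $i=0$ for granted and establish the statement for $i \geq 1$ by induction on $|w|$, the base case $w = \emptyset$ being vacuous since an empty word has no $i$-th pawn.

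For the inductive step, write $w = c w'$, so that $\Gamma_w = a_{p,0}^* \Gamma_{w'}$ if $c = p$ and $\Gamma_w = a_{q,0}^\dagger \Gamma_{w'}$ if $c = q$. The relevant fact at the word level is elementary: prepending a $q$ does not change which pawn is the $i$-th, so $a_{p,i}^*(qw') = q\,(a_{p,i}^* w')$; and prepending a $p$ reindexes the pawns by one, so $a_{p,i}^*(pw') = p\,(a_{p,i-1}^* w')$ (for $i = 1$ this reads $p\,(a_{p,0}^* w') = ppw'$, as it must). Feeding these into the definition of $\Gamma$ and then applying the inductive hypothesis to $w'$ --- legitimate since $|w'| < |w|$, with the case $i=0$ handled above covering the lowest index --- the two cases become
\[
\Gamma_{a_{p,i}^* w} \;=\; a_{q,0}^\dagger\, a_{p,i}^*\, \Gamma_{w'}
\qquad\text{and}\qquad
\Gamma_{a_{p,i}^* w} \;=\; a_{p,0}^*\, a_{p,i-1}^*\, \Gamma_{w'},
\]
respectively. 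Thus the whole proof reduces to two identities between operators \emph{on chord diagrams}, valid for $i \geq 1$:
\[
a_{p,i}^*\, a_{q,0}^\dagger \;=\; a_{q,0}^\dagger\, a_{p,i}^*
\qquad\text{and}\qquad
a_{p,i}^*\, a_{p,0}^* \;=\; a_{p,0}^*\, a_{p,i-1}^* .
\]
Substituting these turns each right-hand side above into $a_{p,i}^* \Gamma_w$, closing the induction.

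It remains to prove those two diagrammatic identities, and this is where I expect the real work --- and the main obstacle --- to be: it is entirely a matter of keeping the renumbering of boundary points straight. Both $a_{q,0}^\dagger$ and $a_{p,0}^*$ alter a chord diagram only in a small neighbourhood of the basepoint, so for $i \geq 1$ the new chord inserted by $a_{p,i}^*$ --- between the points labelled $-2i$ and $-2i+1$ --- lands in a region literally disjoint from where $a_{q,0}^\dagger$ or $a_{p,0}^*$ act; the only thing that can go wrong is the labelling. For the first identity one must check that, once $a_{q,0}^\dagger$ has renumbered the boundary points, the interval between $-2i$ and $-2i+1$ is still the same interval, so $a_{p,i}^*$ inserts its chord in the same place either way. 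For the second, one must check that, once $a_{p,0}^*$ has renumbered the points, the interval between $-2i$ and $-2i+1$ is exactly the image of the interval between $-2(i-1)$ and $-2(i-1)+1$, so that applying $a_{p,i}^*$ afterwards agrees with applying $a_{p,i-1}^*$ before. The subtlety is that the boundary labels are residues modulo $2n+2$ and the modulus jumps by $2$ with every inserted chord, so the bookkeeping is most delicate near the point antipodal to the basepoint; but it is a finite, mechanical check needing nothing beyond pictures --- which is the point, the statement being pure combinatorics. The analogous compatibility of $a_{p,i}$, $a_{q,i}^\dagger$ and $a_{q,i}$ with ``drawing the diagram'' then follows by the same argument, or by taking adjoints with respect to the bilinear pairing.
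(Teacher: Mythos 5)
Your argument is correct, but it takes a genuinely different route from the paper's. The paper justifies the proposition by giving a second, closed-form description of $\Gamma_w$ --- the slalom ``ski slope'' picture, in which every chord attached to a $p$ enters from the left of the rectangle and every chord attached to a $q$ from the right --- and then simply observes that the diagrammatic move of $a_{p,i}^*$ (inserting a chord at the points labelled $-2i$, $-2i+1$) is literally the same as planting one more pole on the left of the course at position $i$, i.e.\ doubling (or, for $i=0$, prepending) the $i$-th pawn; the same reading handles $a_{q,i}^\dagger$ and the annihilation operators at a glance. You instead bootstrap from the recursive definition $\Gamma_{pw'} = a_{p,0}^*\Gamma_{w'}$, $\Gamma_{qw'} = a_{q,0}^\dagger\Gamma_{w'}$, induct on $|w|$, and reduce everything to the two operator identities $a_{p,i}^*\,a_{q,0}^\dagger = a_{q,0}^\dagger\,a_{p,i}^*$ and $a_{p,i}^*\,a_{p,0}^* = a_{p,0}^*\,a_{p,i-1}^*$ ($i\ge 1$) on chord diagrams, proved by tracking the relabelling of boundary points. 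What your route buys is independence from any auxiliary model: only the stated definitions of the operators are used, and the word-level identities $a_{p,i}^*(qw')=q\,(a_{p,i}^*w')$, $a_{p,i}^*(pw')=p\,(a_{p,i-1}^*w')$ are exactly right. What the paper's route buys is a non-inductive, simultaneous treatment of all the operators, plus a picture (the ski slope and its square decomposition) that is reused later in the paper.

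One caveat on your sketch: the justification that the new chord of $a_{p,i}^*$ ``lands in a region literally disjoint from where $a_{q,0}^\dagger$ or $a_{p,0}^*$ act'' breaks down for the second identity at $i=1$, which reads $a_{p,1}^*\,a_{p,0}^* = a_{p,0}^*\,a_{p,0}^*$: there the second insertion happens inside the interval that the first insertion subdivided, and on \emph{opposite} sides of the first new chord in the two expressions (between the old basepoint and the new chord in one, between the new chord and the old point $1$ in the other). The identity still holds --- both outcomes are the same labelled diagram, with short chords at the points $-3,-2$ and $-1,0$ and all old labels shifted identically, since a chord diagram does not remember which chord was drawn first --- but this case must be checked on its own rather than by disjointness, so it deserves an explicit line in the write-up.
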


A similar result holds for operators $a_{q,i}^\dagger$. And also for annihilation operators $a_{p,i}$, $a_{q,i}$, provided we take any diagram with a closed loop to be zero. Annihilating a pawn in the wrong place gives an error ``universe not found''; annihilating a chord in the wrong place, so that the result is not a chord diagram, gives an error ``chord diagram not found''.

\subsection{Chord diagrams of chessboards as ski slopes}

In fact there's a quicker way to draw the diagram of a chessboard, which also illustrates why the above proposition is true. But we need to consider a different sport: skiing.

We imagine we have a ski slope, where as usual we ski from top to bottom. It is rectangular in shape, with a starting line at the top, a finishing line at the bottom, and a left and right side along the slope.

We consider a slalom run. There are obstacles, which we can imagine as poles in the ground. As you go down the slope, you have to round all the obstacles, from top to bottom; and after rounding each one you have to return to the centre of the course. There are some (white) poles placed on the left, and some (black) on the right of the slope. We'll write $p$ for a pole on the left, and $q$ for a pole on the right. For the slalom course $qpqq$ the course looks as follows.
\[
\begin{tikzpicture}[
scale=0.7, 
suture/.style={thick, draw=red},
boundary/.style={ultra thick},
vertex/.style={draw=red, fill=red},
leftpole/.style={draw=black, fill=white},
rightpole/.style={draw=black, fill=black}]
\draw [boundary] (-2,0) -- (6,0) -- (6,-9) -- (-2,-9) -- cycle;
\coordinate (0) at (2,0);
\coordinate (1) at (4,-1);
\coordinate (2) at (4,-2);
\coordinate (-1) at (0,-3);
\coordinate (-2) at (0,-4);
\coordinate (3) at (4,-5);
\coordinate (4) at (4,-6);
\coordinate (5) at (4,-7);
\coordinate (6) at (4,-8);
\coordinate (7) at (2,-9);
\coordinate (p1) at (0,-3.5);
\coordinate (q1) at (4,-1.5);
\coordinate (q2) at (4,-5.5);
\coordinate (q3) at (4,-7.5);
\draw [suture] (0) .. controls (2,-0.5) and (3.5,-1) .. (1) arc (90:-90:0.5) 
.. controls (3.5,-2) and (0.5,-3) .. (-1)
arc (90:270:0.5)
.. controls (0.5,-4) and (3.5,-5) .. (3)
arc (90:-90:0.5)
.. controls (3.5,-6) and (2,-6.25) .. (2,-6.5) .. controls (2,-6.75) and (3.5,-7) .. (5)
arc (90:-90:0.5)
.. controls (3.5,-8) and (2,-8.5) .. (7);
\foreach \point in {p1}
\fill [leftpole] (\point) circle (5pt);
\foreach \point in {q1, q2, q3}
\fill [rightpole] (\point) circle (5pt);
\end{tikzpicture}
\]
Now, if we just consider that part of the course inside the obstacles, we get an interesting chord diagram. In fact, it is just the chord diagram $\Gamma_{qpqq}$.
\[
\begin{tikzpicture}[
scale=0.7, 
suture/.style={thick, draw=red},
boundary/.style={ultra thick},
vertex/.style={draw=red, fill=red},
leftpole/.style={draw=black, fill=white},
rightpole/.style={draw=black, fill=black}]
\coordinate [label = above:{$0$}] (0) at (2,0);
\coordinate [label = right:{$1$}] (1) at (4,-1);
\coordinate [label = right:{$2$}] (2) at (4,-2);
\coordinate [label = left:{$-1$}] (-1) at (0,-3);
\coordinate [label = left:{$-2$}] (-2) at (0,-4);
\coordinate [label = right:{$3$}] (3) at (4,-5);
\coordinate [label = right:{$4$}] (4) at (4,-6);
\coordinate [label = right:{$5$}] (5) at (4,-7);
\coordinate [label = right:{$6$}] (6) at (4,-8);
\coordinate [label = below:{$7$}] (7) at (2,-9);
\coordinate (p1) at (0,-3.5);
\coordinate (q1) at (4,-1.5);
\coordinate (q2) at (4,-5.5);
\coordinate (q3) at (4,-7.5);
\filldraw[fill=black!10!white, draw=none] (0) .. controls (2,-0.5) and (3.5,-1) .. (1) -- (4,0) -- cycle;
\filldraw[fill=black!10!white, draw=none] (2) .. controls (3.5,-2) and (0.5,-3) .. (-1) -- (-2) .. controls (0.5,-4) and (3.5,-5) .. (3) -- cycle;
\filldraw[fill=black!10!white, draw=none] (4) .. controls (3.5,-6) and (2,-6.25) .. (2,-6.5) .. controls (2,-6.75) and (3.5,-7) .. (5);
\filldraw[fill=black!10!white, draw=none] (6) .. controls (3.5,-8) and (2,-8.5) .. (7) -- (4,-9) -- cycle;
\draw [suture] (0) .. controls (2,-0.5) and (3.5,-1) .. (1);
\draw [suture, dotted] (1) arc (90:-90:0.5);
\draw [suture] (2) .. controls (3.5,-2) and (0.5,-3) .. (-1);
\draw [suture, dotted] (-1) arc (90:270:0.5);
\draw [suture] (-2) .. controls (0.5,-4) and (3.5,-5) .. (3);
\draw [suture, dotted] (3) arc (90:-90:0.5);
\draw [suture] (4) .. controls (3.5,-6) and (2,-6.25) .. (2,-6.5) .. controls (2,-6.75) and (3.5,-7) .. (5);
\draw [suture, dotted] (5) arc (90:-90:0.5);
\draw [suture] (6) .. controls (3.5,-8) and (2,-8.5) .. (7);
\draw [boundary] (0,0) -- (4,0) -- (4,-9) -- (0,-9) -- cycle;
\draw [dotted] (0,0) -- (-2,0) -- (-2,-9) -- (0,-9);
\draw [dotted] (4,0) -- (6,0) -- (6,-9) -- (4,-9);
\foreach \point in {0, 1, 2, 3, 4, 5, 6, 7, -2, -1}
\fill [vertex] (\point) circle (2pt);
\foreach \point in {p1}
\fill [leftpole] (\point) circle (5pt);
\foreach \point in {q1, q2, q3}
\fill [rightpole] (\point) circle (5pt);
\end{tikzpicture}
\]
It's not difficult to see why this skiing algorithm gives the correct chord diagram for each word/chessboard. Having the chord diagram arranged this way, with all the chords coming from pawns / $p$'s on the left, and all the chords coming from anti-pawns / $q$'s on the right, makes it easier to see why the chord diagram operations correspond to operations on chessboards. For instance, $a_{p,i}^*$ adds an extra component to the ski run, entering from the left side, then sharply turning back to the left hand side of the slope. This is precisely what you get when you add an extra pole on the left.

\subsection{Square decomposition}

We can also note that the pawns and anti-pawns correspond to a precise decomposition of the chord diagram into squares, as shown by the green lines.

\[
\begin{tikzpicture}[
scale=0.7, 
suture/.style={thick, draw=red},
boundary/.style={ultra thick},
vertex/.style={draw=red, fill=red},
rightpole/.style={draw=black, fill=black},
leftpole/.style ={draw=black, fill=white},
decomposition/.style={thick, draw=green!50!black}]
\draw (-8,-4.5) node {
$\begin{tabularx}{0.3\textwidth}{|X|X|X|X|}
\hline
\scalebox{0.9}{\BlackPawnOnWhite} 
& \scalebox{0.9}{\WhitePawnOnWhite} & 
\scalebox{0.9}{\BlackPawnOnWhite} 
& 
\scalebox{0.9}{\BlackPawnOnWhite}
\\
\hline 
\end{tabularx}$};
\coordinate [label = below left:{$0$}] (0) at (2,0);
\coordinate [label = right:{$1$}] (1) at (4,-1);
\coordinate [label = right:{$2$}] (2) at (4,-2);
\coordinate [label = left:{$-1$}] (-1) at (0,-3);
\coordinate [label = left:{$-2$}] (-2) at (0,-4);
\coordinate [label = right:{$3$}] (3) at (4,-5);
\coordinate [label = right:{$4$}] (4) at (4,-6);
\coordinate [label = right:{$5$}] (5) at (4,-7);
\coordinate [label = right:{$6$}] (6) at (4,-8);
\coordinate [label = above left:{$7$}] (7) at (2,-9);
\coordinate (p1) at (0,-3.5);
\coordinate (q1) at (4,-1.5);
\coordinate (q2) at (4,-5.5);
\coordinate (q3) at (4,-7.5);
\filldraw[fill=black!10!white, draw=none] (0) .. controls (2,-0.5) and (3.5,-1) .. (1) -- (4,0) -- cycle;
\filldraw[fill=black!10!white, draw=none] (2) .. controls (3.5,-2) and (0.5,-3) .. (-1) -- (-2) .. controls (0.5,-4) and (3.5,-5) .. (3) -- cycle;
\filldraw[fill=black!10!white, draw=none] (4) .. controls (3.5,-6) and (2,-6.25) .. (2,-6.5) .. controls (2,-6.75) and (3.5,-7) .. (5);
\filldraw[fill=black!10!white, draw=none] (6) .. controls (3.5,-8) and (2,-8.5) .. (7) -- (4,-9) -- cycle;
\draw [suture] (0) .. controls (2,-0.5) and (3.5,-1) .. (1);
\draw [suture] (2) .. controls (3.5,-2) and (0.5,-3) .. (-1);
\draw [suture] (-2) .. controls (0.5,-4) and (3.5,-5) .. (3);
\draw [suture] (4) .. controls (3.5,-6) and (2,-6.25) .. (2,-6.5) .. controls (2,-6.75) and (3.5,-7) .. (5);
\draw [suture] (6) .. controls (3.5,-8) and (2,-8.5) .. (7);
\draw [decomposition] (0,-2.5) -- (4,-2.5);
\draw [decomposition] (0,-4.5) -- (4,-4.5);
\draw [decomposition] (0,-6.5) -- (4,-6.5);
\draw [boundary] (0,0) -- (4,0) -- (4,-9) -- (0,-9) -- cycle;
\draw (-2,-4.5) node {$\leftrightarrow$};
\foreach \point in {0, 1, 2, 3, 4, 5, 6, 7, -2, -1}
\fill [vertex] (\point) circle (2pt);
\foreach \point in {q1, q2, q3}
\fill [rightpole] (\point) circle (5pt);
\foreach \point in {p1}
\fill [leftpole] (\point) circle (5pt);
\end{tikzpicture}
\]

One can essentially read the chord diagram as a chessboard in this way: reading the ski slope from top to bottom, reads off the chessboard from left to right.

Moreover, we see that each of these squares in the chord diagram contains chords in a particular configuration, corresponding to a pawn or anti-pawn.

\[
\begin{tikzpicture}[
scale=2, 
suture/.style={thick, draw=red}, 
]
\coordinate (1tl) at (0,1);
\coordinate (1tr) at (1,1);
\coordinate (1bl) at (0,0);
\coordinate (1br) at (1,0);

\draw (-1,0.5) node {$\WhitePawnOnWhite \quad \leftrightarrow$};

\filldraw[fill=black!10!white, draw=none] (0.5,1) -- (1tr) -- (1,0.5) to [bend right=45] (0.5,0) -- (1bl) -- (0,0.5) to [bend right=45] (0.5,1);

\draw (1bl) -- (1br) -- (1tr) -- (1tl) -- cycle;
\draw [suture] (0.5,0) to [bend left=45] (1,0.5);
\draw [suture] (0,0.5) to [bend right=45] (0.5,1);
\foreach \point in {1bl, 1br, 1tl, 1tr}
\fill [black] (\point) circle (1pt);
\end{tikzpicture}
\quad
\quad
\begin{tikzpicture}[
scale=2, 
suture/.style={thick, draw=red}, 
]
\coordinate (2tl) at (3,1);
\coordinate (2tr) at (4,1);
\coordinate (2bl) at (3,0);
\coordinate (2br) at (4,0);

\draw (2,0.5) node {$\BlackPawnOnWhite \quad \leftrightarrow$};

\filldraw[fill=black!10!white, draw=none] (3.5,1) -- (2tr) -- (4,0.5) to [bend left=45] (3.5,1);
\filldraw[fill=black!10!white, draw=none] (3,0.5) to [bend left=45] (3.5,0) -- (2bl) -- cycle;

\draw (2bl) -- (2br) -- (2tr) -- (2tl) -- cycle;
\draw [suture] (3.5,1) to [bend right=45] (4,0.5);
\draw [suture] (3.5,0) to [bend right=45] (3,0.5);

\foreach \point in {2bl, 2br, 2tl, 2tr}
\fill [black] (\point) circle (1pt);

\end{tikzpicture}
\]

In this way, each pawn or anti-pawn provides one of two ways of drawing in chords into the chord diagram --- providing, in a sense, \emph{one bit of information} and suggesting relations to quantum information theory. Moreover, since we have creation and annihilation operators for pawns and anti-pawns, and they can also be regarded as bits of information, one is reminded of the ``it from bit'' idea of John Archibald Wheeler \cite{Wheeler90}. These ideas are more fully developed in \cite{Me12_itsy_bitsy}.

This decomposition of a chord diagram into pieces, each of which has curves in one of two specified configurations, is also reminiscent of statistical mechanics.

\subsection{Curves on cylinders}
\label{sec:curves_on_cylinders}

Consider the cylinder shown. Its boundary consists of discs on the top and bottom, and a vertical annulus. On the vertical annulus we have some vertical curves, drawn in red.

\begin{center}
\includegraphics[scale=0.5]{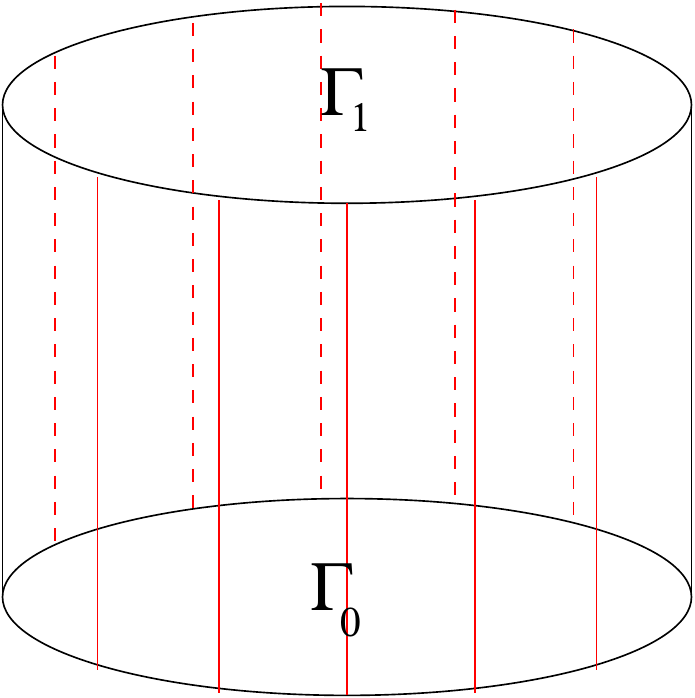}
\end{center}

We are going to draw chord diagrams $\Gamma_0$ and $\Gamma_1$ on the bottom and top discs, and then join up all the curves to obtain curves on the boundary of the cylinder, which of course is topologically a sphere.

However, when we do so, we draw the curves are arranged along the corners as shown:
\begin{center}
\includegraphics[scale=0.4]{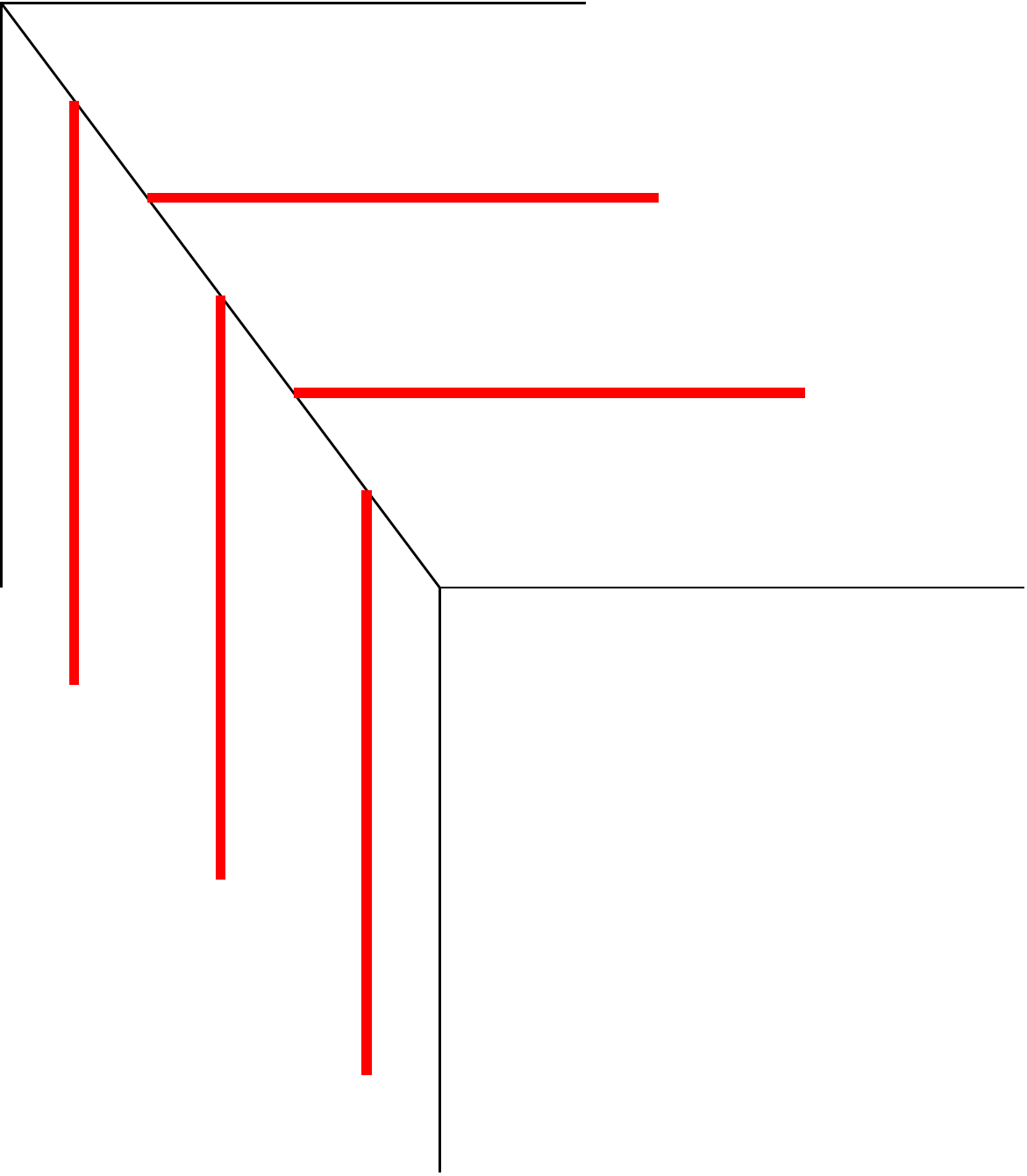}
\end{center}
When we connect up the curves, we do it in the following fashion. We could imagine that we are turning and walking along the corner, as shown on the left; or rounding the corners and the curves, as shown on the right.

\begin{center}
\includegraphics[scale=0.4]{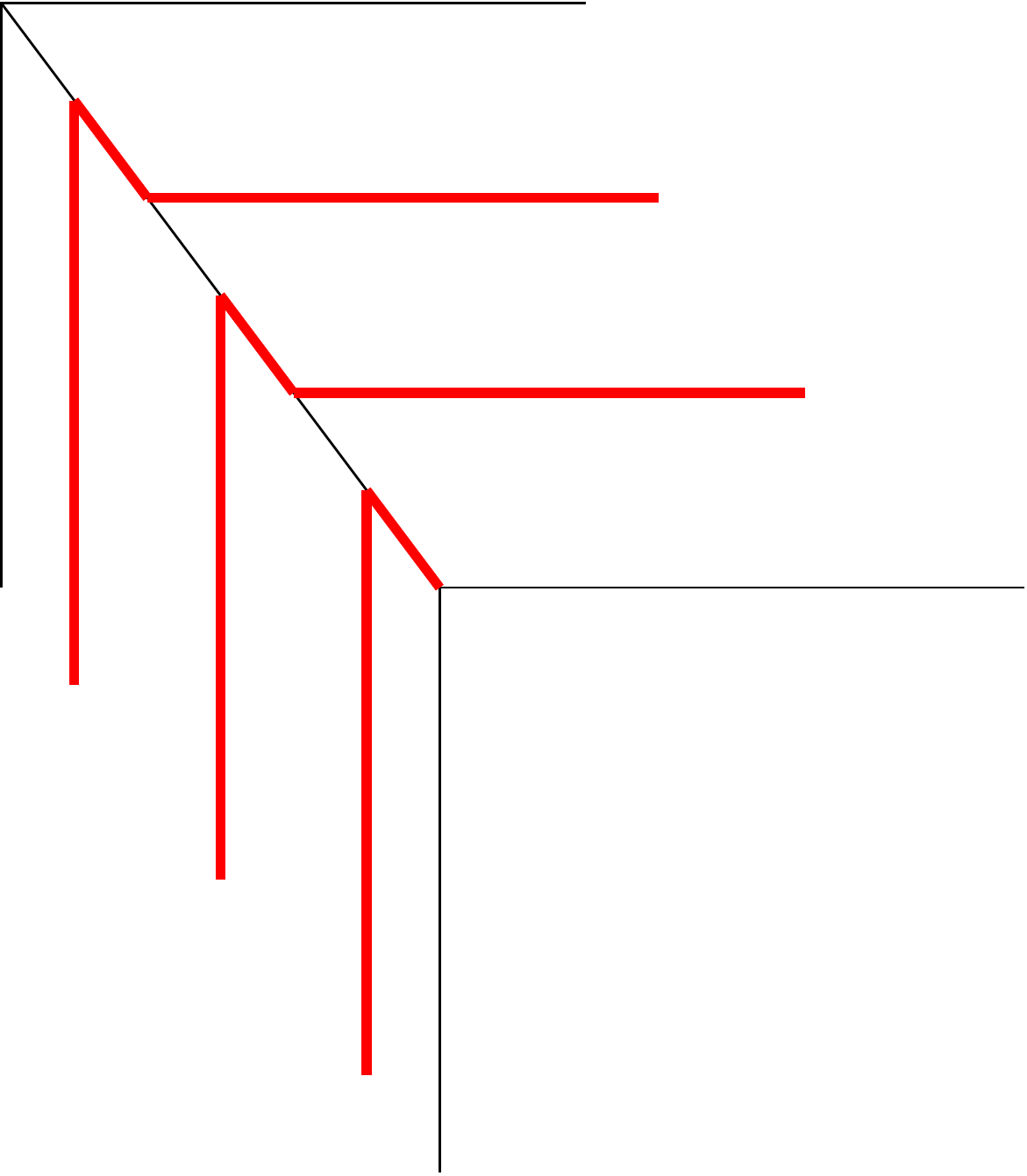}
\quad 
\includegraphics[scale=0.4]{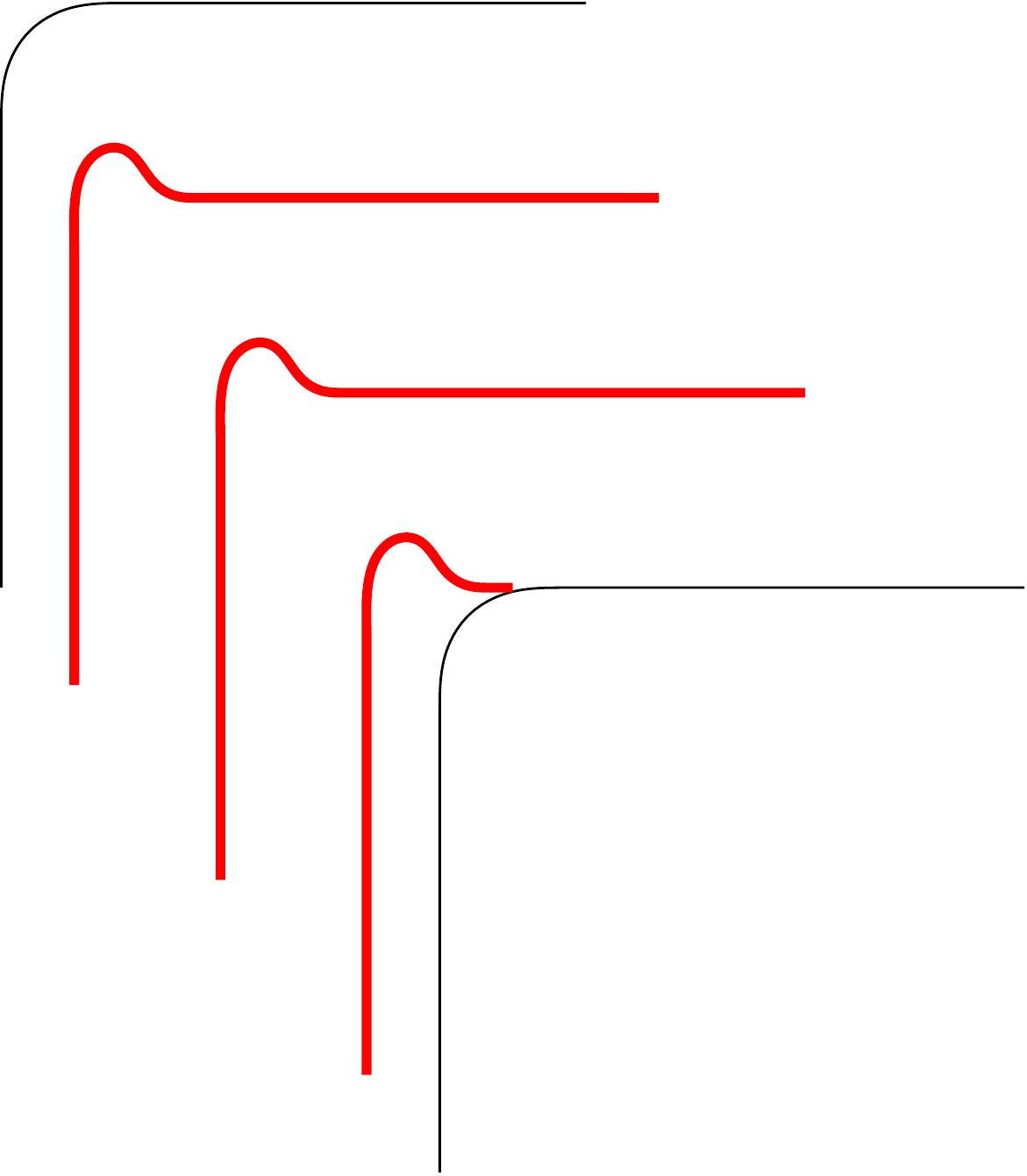}
\end{center}

Here we have an example, with a chord diagram at the top (namely $\Gamma_{qp}$, if we regard the basepoint as being at the back), and another chord diagram at the bottom (namely $\Gamma_{pq}$).
\begin{center}
\includegraphics[scale=0.5]{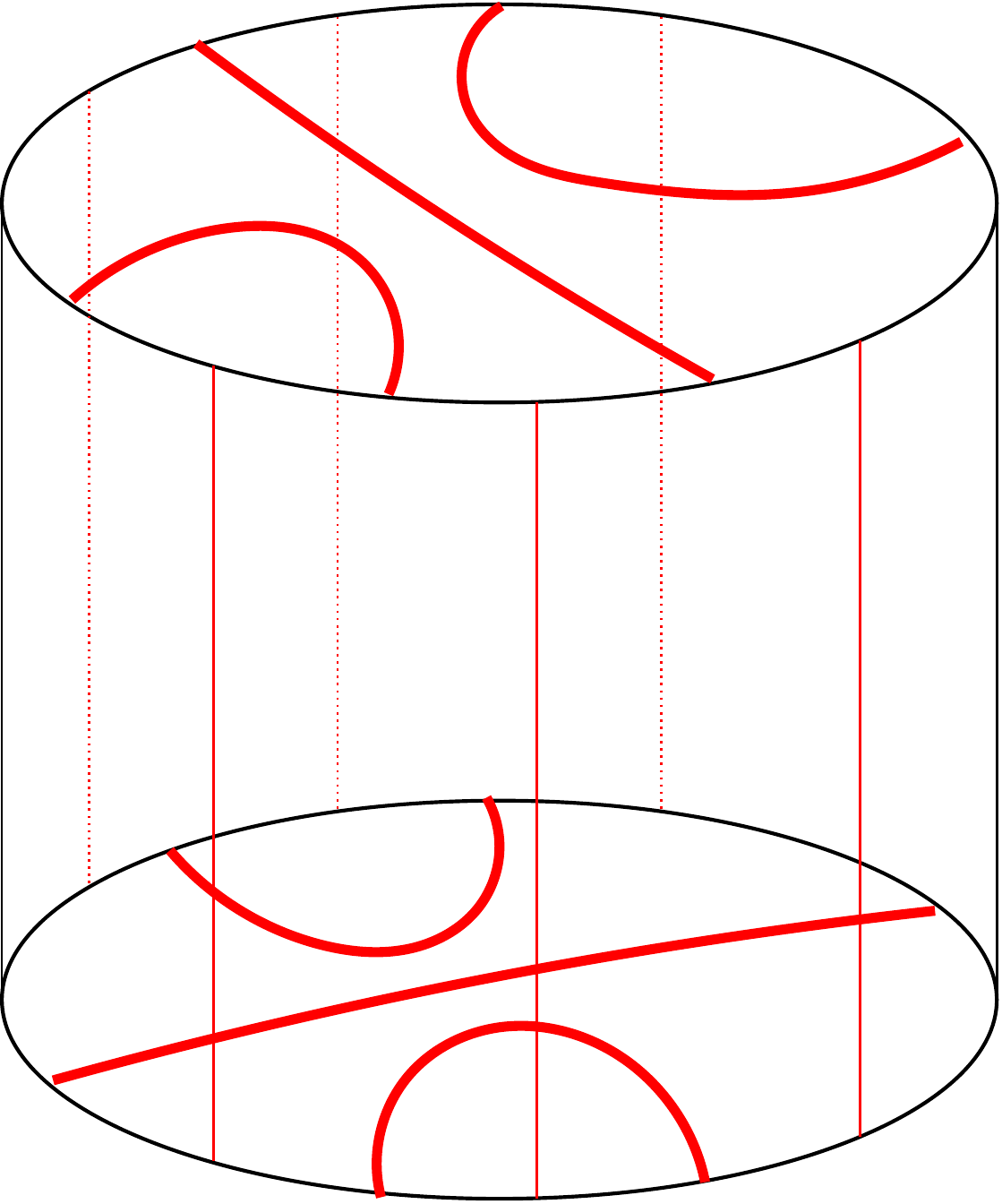}
\end{center}
If we join up the curves as we specified, we obtain the following, which you can check is a single connected curve on the cylinder/sphere. (A fun exercise is to prove that if we draw the \emph{same} chord diagram on the top and bottom of the cylinder, aligned exactly, then joining up the curves on the cylinder in this fashion always gives a single connected curve.)
\begin{center}
\includegraphics[scale=0.5]{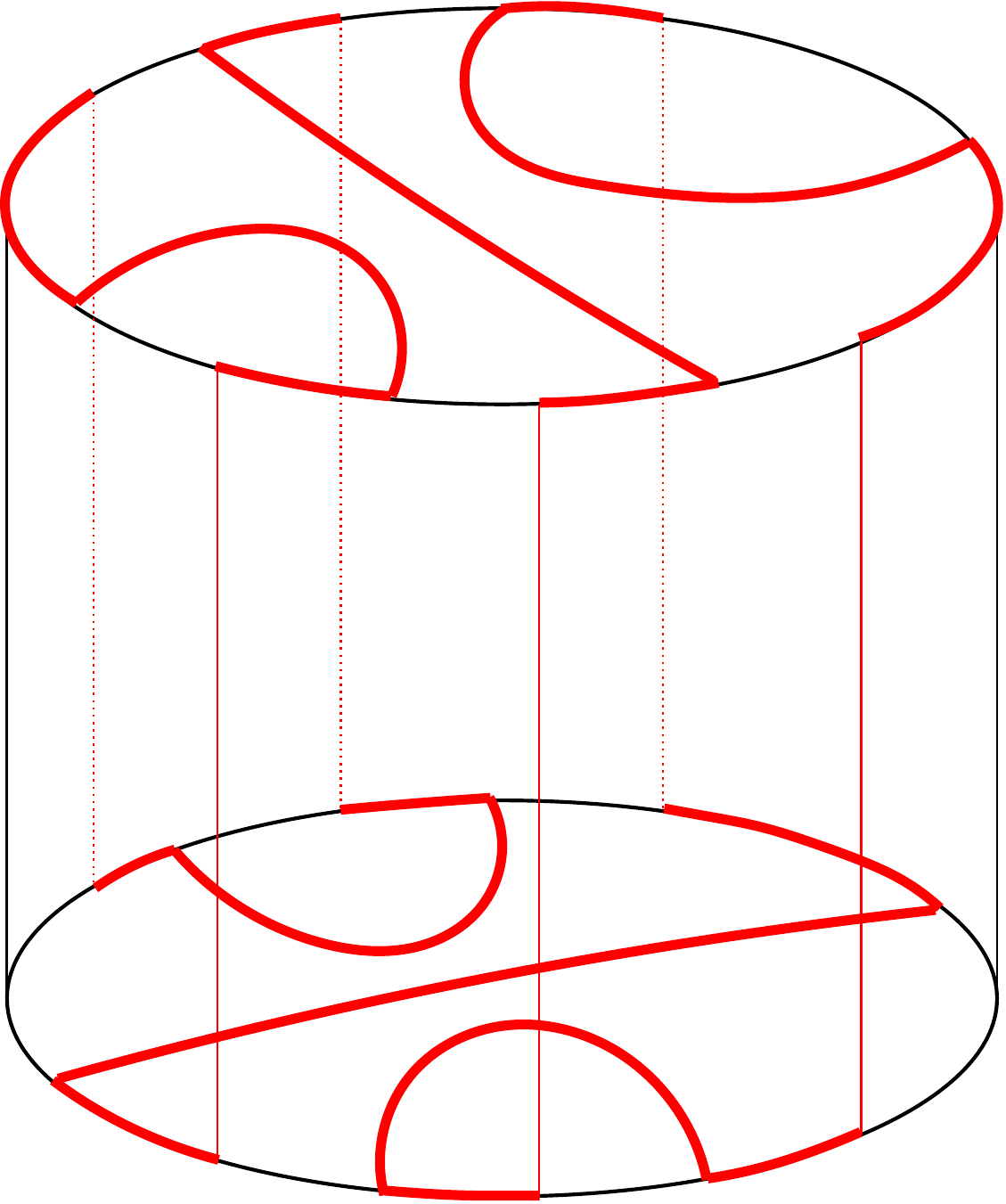}
\end{center}

\subsection{Finger moves}

Now, suppose we perform an annihilation operator $a_{p,i}$ on the chord diagram on the bottom of the cylinder. The topological effect is the same as performing the creation operator $a_{p,i}^*$ on the top of the cylinder --- the two are related by a \emph{finger move} as shown.

\begin{center}
\def\svgwidth{120pt}
\begingroup%
  \makeatletter%
  \providecommand\color[2][]{%
    \errmessage{(Inkscape) Color is used for the text in Inkscape, but the package 'color.sty' is not loaded}%
    \renewcommand\color[2][]{}%
  }%
  \providecommand\transparent[1]{%
    \errmessage{(Inkscape) Transparency is used (non-zero) for the text in Inkscape, but the package 'transparent.sty' is not loaded}%
    \renewcommand\transparent[1]{}%
  }%
  \providecommand\rotatebox[2]{#2}%
  \ifx\svgwidth\undefined%
    \setlength{\unitlength}{340.65934272bp}%
    \ifx\svgscale\undefined%
      \relax%
    \else%
      \setlength{\unitlength}{\unitlength * \real{\svgscale}}%
    \fi%
  \else%
    \setlength{\unitlength}{\svgwidth}%
  \fi%
  \global\let\svgwidth\undefined%
  \global\let\svgscale\undefined%
  \makeatother%
  \begin{picture}(1,1.22209895)%
    \put(0,0){\includegraphics[width=\unitlength]{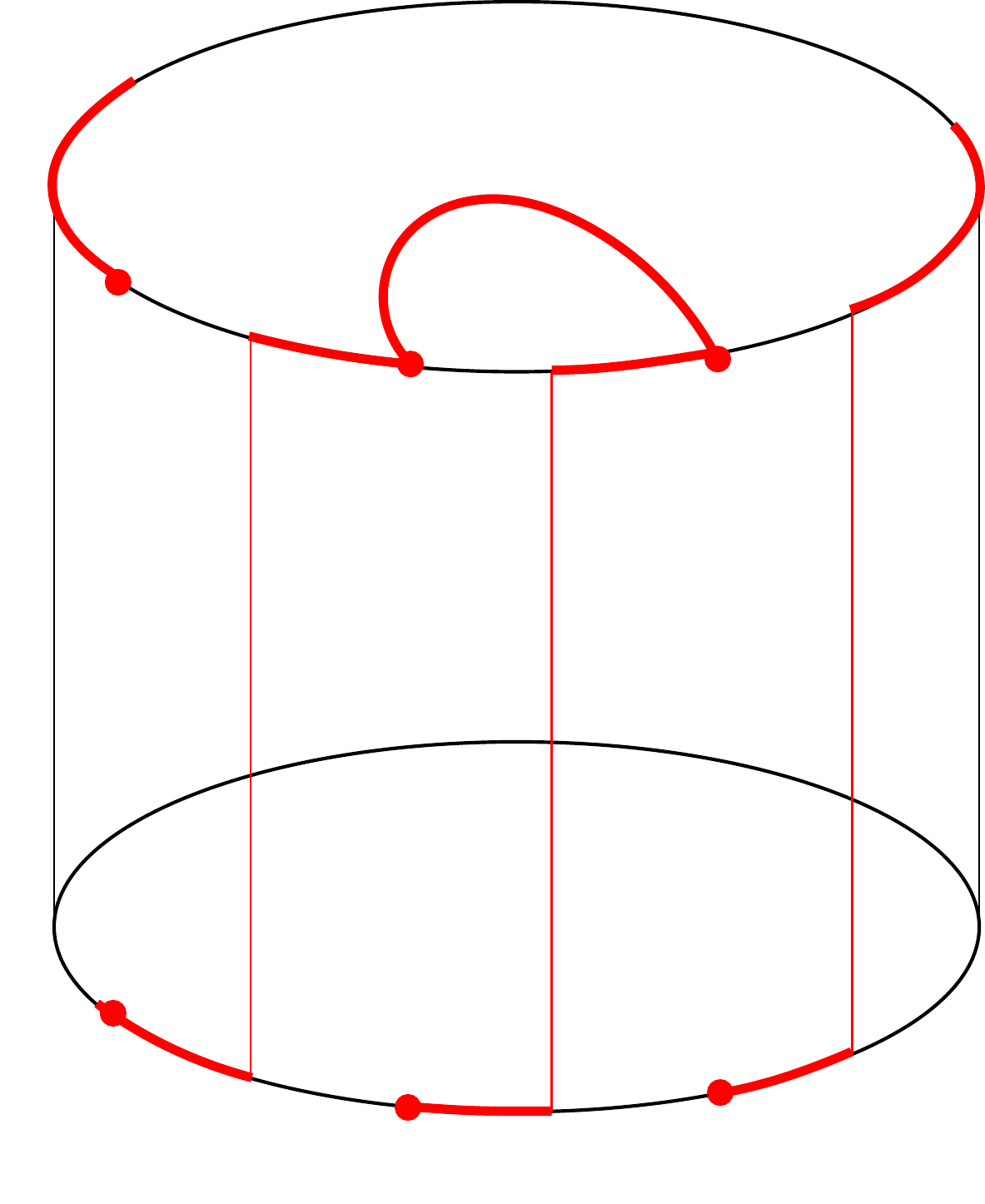}}%
    \put(0.64953066,0.76568568){\color[rgb]{0,0,0}\makebox(0,0)[lb]{\smash{$-2i-1$}}}%
    \put(0.3652226,0.7631697){\color[rgb]{0,0,0}\makebox(0,0)[lb]{\smash{$-2i$}}}%
    \put(0.36773854,0.00836954){\color[rgb]{0,0,0}\makebox(0,0)[lb]{\smash{$-2i$}}}%
    \put(0.62940267,0.01340143){\color[rgb]{0,0,0}\makebox(0,0)[lb]{\smash{$-2i-1$}}}%
    \put(0.0004025,0.08636555){\color[rgb]{0,0,0}\makebox(0,0)[lb]{\smash{$-2i+1$}}}%
    \put(-0.00211352,0.83613374){\color[rgb]{0,0,0}\makebox(0,0)[lb]{\smash{$-2i+1$}}}%
  \end{picture}%
\endgroup%

\quad
\def\svgwidth{120pt}
\begingroup%
  \makeatletter%
  \providecommand\color[2][]{%
    \errmessage{(Inkscape) Color is used for the text in Inkscape, but the package 'color.sty' is not loaded}%
    \renewcommand\color[2][]{}%
  }%
  \providecommand\transparent[1]{%
    \errmessage{(Inkscape) Transparency is used (non-zero) for the text in Inkscape, but the package 'transparent.sty' is not loaded}%
    \renewcommand\transparent[1]{}%
  }%
  \providecommand\rotatebox[2]{#2}%
  \ifx\svgwidth\undefined%
    \setlength{\unitlength}{344.94484687bp}%
    \ifx\svgscale\undefined%
      \relax%
    \else%
      \setlength{\unitlength}{\unitlength * \real{\svgscale}}%
    \fi%
  \else%
    \setlength{\unitlength}{\svgwidth}%
  \fi%
  \global\let\svgwidth\undefined%
  \global\let\svgscale\undefined%
  \makeatother%
  \begin{picture}(1,1.20691591)%
    \put(0,0){\includegraphics[width=\unitlength]{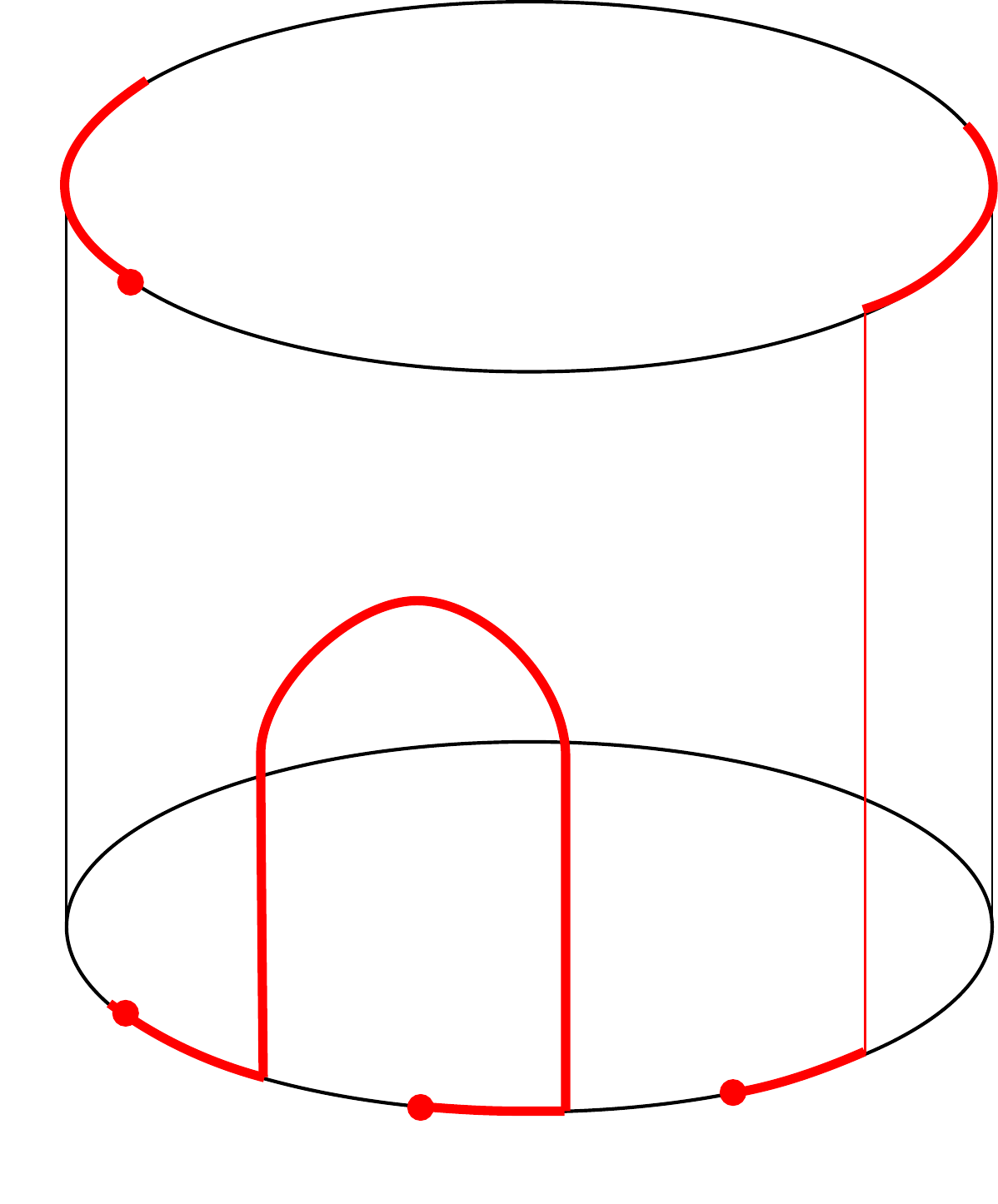}}%
    \put(0.40541048,0.00826556){\color[rgb]{0,0,0}\makebox(0,0)[lb]{\smash{$-2i$}}}%
    \put(0.6538848,0.02317397){\color[rgb]{0,0,0}\makebox(0,0)[lb]{\smash{$-2i-1$}}}%
    \put(0.01033647,0.10517049){\color[rgb]{0,0,0}\makebox(0,0)[lb]{\smash{$-2i+1$}}}%
    \put(-0.00208726,0.82823056){\color[rgb]{0,0,0}\makebox(0,0)[lb]{\smash{$-2i+1$}}}%
  \end{picture}%
\endgroup%

\quad
\def\svgwidth{120pt}
\begingroup%
  \makeatletter%
  \providecommand\color[2][]{%
    \errmessage{(Inkscape) Color is used for the text in Inkscape, but the package 'color.sty' is not loaded}%
    \renewcommand\color[2][]{}%
  }%
  \providecommand\transparent[1]{%
    \errmessage{(Inkscape) Transparency is used (non-zero) for the text in Inkscape, but the package 'transparent.sty' is not loaded}%
    \renewcommand\transparent[1]{}%
  }%
  \providecommand\rotatebox[2]{#2}%
  \ifx\svgwidth\undefined%
    \setlength{\unitlength}{324.90178046bp}%
    \ifx\svgscale\undefined%
      \relax%
    \else%
      \setlength{\unitlength}{\unitlength * \real{\svgscale}}%
    \fi%
  \else%
    \setlength{\unitlength}{\svgwidth}%
  \fi%
  \global\let\svgwidth\undefined%
  \global\let\svgscale\undefined%
  \makeatother%
  \begin{picture}(1,1.2754861)%
    \put(0,0){\includegraphics[width=\unitlength]{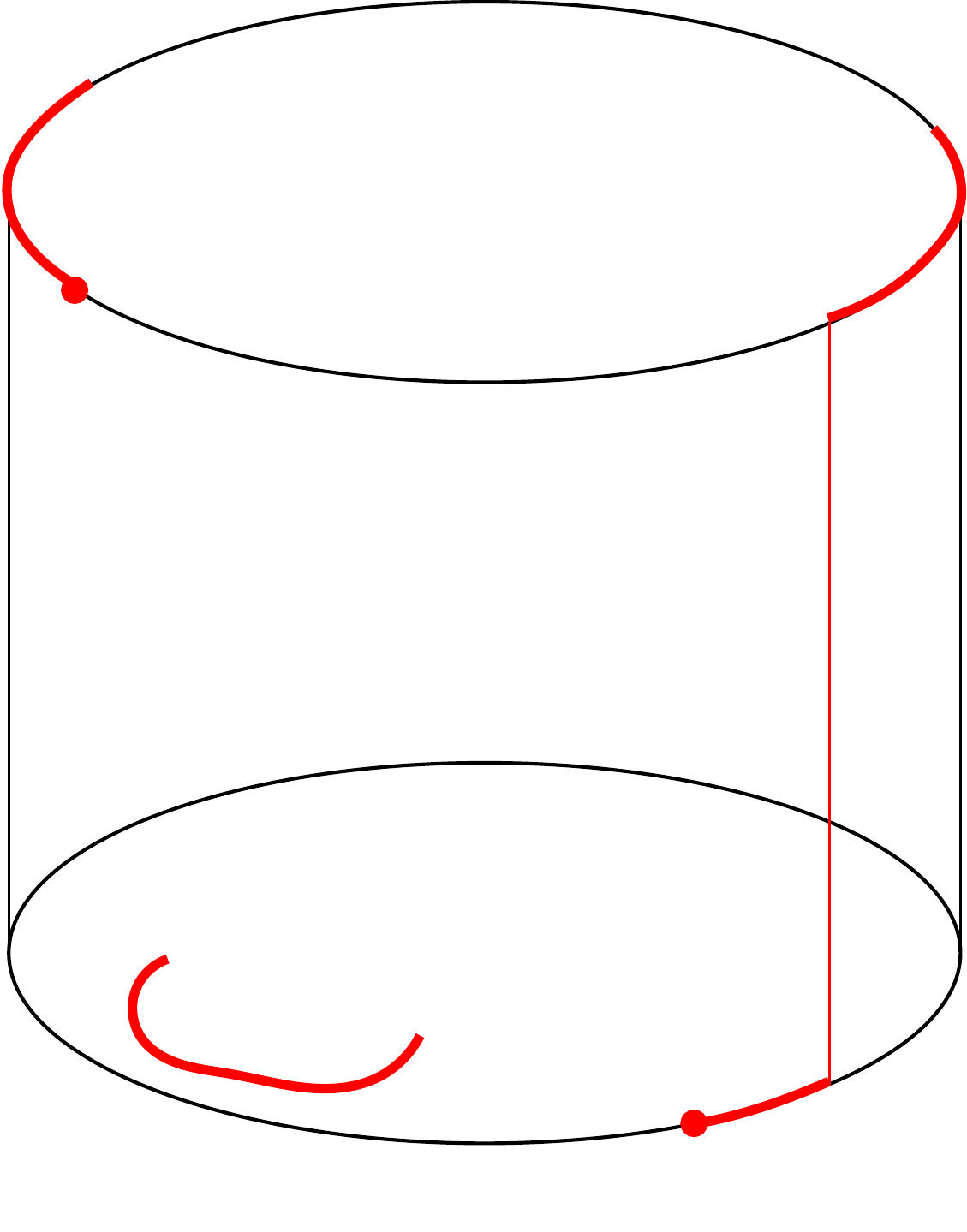}}%
    \put(0.60775854,0.00877546){\color[rgb]{0,0,0}\makebox(0,0)[lb]{\smash{$-2i+1$}}}%
    \put(-0.00221603,0.87617045){\color[rgb]{0,0,0}\makebox(0,0)[lb]{\smash{$-2i+1$}}}%
  \end{picture}%
\endgroup%

\end{center}

Similarly, if we perform the creation operator $a_{q,i}^\dagger$ on the \emph{bottom} of the cylinder, the topological effect is the same as performing the annihilation operator $a_{q,i}$ on the \emph{top}.

\begin{center}
\def\svgwidth{120pt}
\begingroup%
  \makeatletter%
  \providecommand\color[2][]{%
    \errmessage{(Inkscape) Color is used for the text in Inkscape, but the package 'color.sty' is not loaded}%
    \renewcommand\color[2][]{}%
  }%
  \providecommand\transparent[1]{%
    \errmessage{(Inkscape) Transparency is used (non-zero) for the text in Inkscape, but the package 'transparent.sty' is not loaded}%
    \renewcommand\transparent[1]{}%
  }%
  \providecommand\rotatebox[2]{#2}%
  \ifx\svgwidth\undefined%
    \setlength{\unitlength}{340.65934272bp}%
    \ifx\svgscale\undefined%
      \relax%
    \else%
      \setlength{\unitlength}{\unitlength * \real{\svgscale}}%
    \fi%
  \else%
    \setlength{\unitlength}{\svgwidth}%
  \fi%
  \global\let\svgwidth\undefined%
  \global\let\svgscale\undefined%
  \makeatother%
  \begin{picture}(1,1.22209895)%
    \put(0,0){\includegraphics[width=\unitlength]{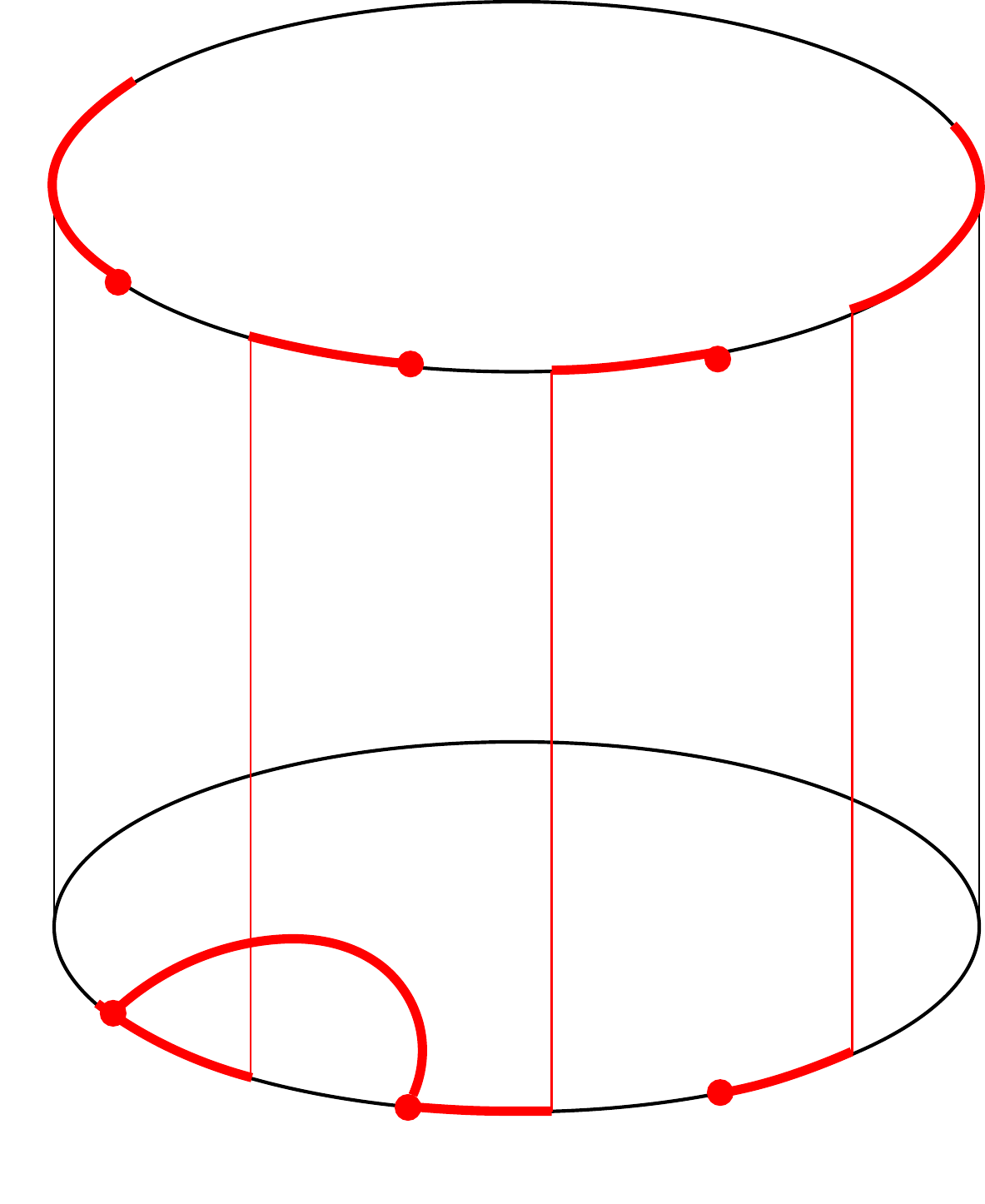}}%
    \put(0.64953066,0.76568568){\color[rgb]{0,0,0}\makebox(0,0)[lb]{\smash{$2i-1$}}}%
    \put(0.3652226,0.7631697){\color[rgb]{0,0,0}\makebox(0,0)[lb]{\smash{$2i$}}}%
    \put(0.36773854,0.00836954){\color[rgb]{0,0,0}\makebox(0,0)[lb]{\smash{$2i$}}}%
    \put(0.62940267,0.01340143){\color[rgb]{0,0,0}\makebox(0,0)[lb]{\smash{$2i-1$}}}%
    \put(0.0004025,0.08636555){\color[rgb]{0,0,0}\makebox(0,0)[lb]{\smash{$2i+1$}}}%
    \put(-0.00211352,0.83613374){\color[rgb]{0,0,0}\makebox(0,0)[lb]{\smash{$2i+1$}}}%
  \end{picture}%
\endgroup%

\quad
\def\svgwidth{120pt}
\begingroup%
  \makeatletter%
  \providecommand\color[2][]{%
    \errmessage{(Inkscape) Color is used for the text in Inkscape, but the package 'color.sty' is not loaded}%
    \renewcommand\color[2][]{}%
  }%
  \providecommand\transparent[1]{%
    \errmessage{(Inkscape) Transparency is used (non-zero) for the text in Inkscape, but the package 'transparent.sty' is not loaded}%
    \renewcommand\transparent[1]{}%
  }%
  \providecommand\rotatebox[2]{#2}%
  \ifx\svgwidth\undefined%
    \setlength{\unitlength}{340.65934272bp}%
    \ifx\svgscale\undefined%
      \relax%
    \else%
      \setlength{\unitlength}{\unitlength * \real{\svgscale}}%
    \fi%
  \else%
    \setlength{\unitlength}{\svgwidth}%
  \fi%
  \global\let\svgwidth\undefined%
  \global\let\svgscale\undefined%
  \makeatother%
  \begin{picture}(1,1.21706706)%
    \put(0,0){\includegraphics[width=\unitlength]{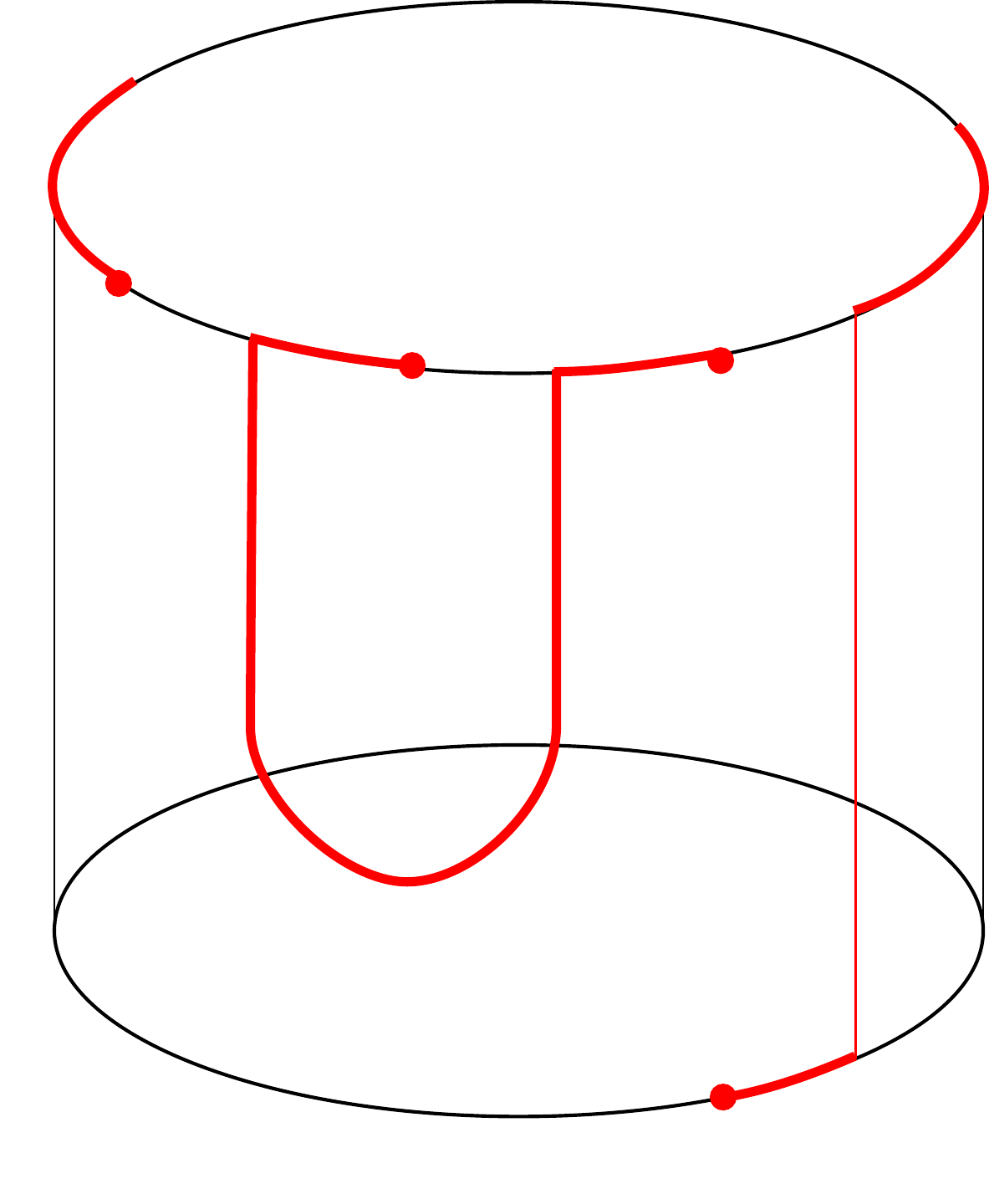}}%
    \put(0.64953066,0.76065379){\color[rgb]{0,0,0}\makebox(0,0)[lb]{\smash{$2i-1$}}}%
    \put(0.3652226,0.75813781){\color[rgb]{0,0,0}\makebox(0,0)[lb]{\smash{$2i$}}}%
    \put(0.62940267,0.00836954){\color[rgb]{0,0,0}\makebox(0,0)[lb]{\smash{$2i-1$}}}%
    \put(-0.00211352,0.83110185){\color[rgb]{0,0,0}\makebox(0,0)[lb]{\smash{$2i+1$}}}%
  \end{picture}%
\endgroup%

\quad
\def\svgwidth{120pt}
\begingroup%
  \makeatletter%
  \providecommand\color[2][]{%
    \errmessage{(Inkscape) Color is used for the text in Inkscape, but the package 'color.sty' is not loaded}%
    \renewcommand\color[2][]{}%
  }%
  \providecommand\transparent[1]{%
    \errmessage{(Inkscape) Transparency is used (non-zero) for the text in Inkscape, but the package 'transparent.sty' is not loaded}%
    \renewcommand\transparent[1]{}%
  }%
  \providecommand\rotatebox[2]{#2}%
  \ifx\svgwidth\undefined%
    \setlength{\unitlength}{340.65934272bp}%
    \ifx\svgscale\undefined%
      \relax%
    \else%
      \setlength{\unitlength}{\unitlength * \real{\svgscale}}%
    \fi%
  \else%
    \setlength{\unitlength}{\svgwidth}%
  \fi%
  \global\let\svgwidth\undefined%
  \global\let\svgscale\undefined%
  \makeatother%
  \begin{picture}(1,1.21706706)%
    \put(0,0){\includegraphics[width=\unitlength]{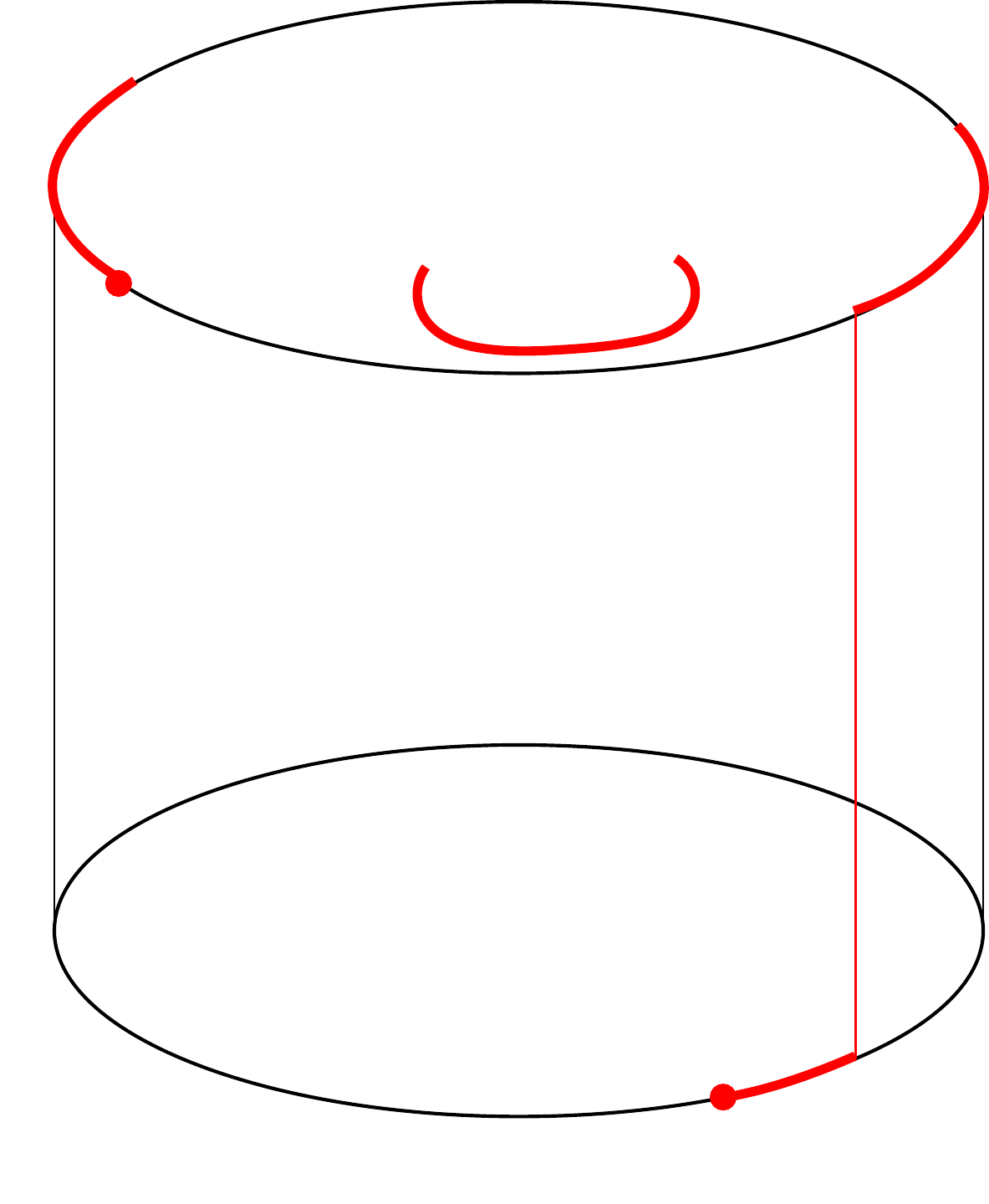}}%
    \put(0.62940267,0.00836954){\color[rgb]{0,0,0}\makebox(0,0)[lb]{\smash{$2i-1$}}}%
    \put(-0.00211352,0.83110185){\color[rgb]{0,0,0}\makebox(0,0)[lb]{\smash{$2i-1$}}}%
  \end{picture}%
\endgroup%

\end{center}

So these creation and annihilation operators, inserting and closing off chords in a chord diagram, are related in a way that is quite similar to an adjoint.

\subsection{``Inner product'' on chord diagrams}

In fact, based on the above, we will define an  ``inner product'' of two chord diagrams $\langle \Gamma_0 | \Gamma_1 \rangle$ via what happens when you insert those two diagrams into the cylinder and round the corners. The result must be a collection of curves on the sphere; it may be one connected curve, or it may be disconnected and consist of several connected curve components. 

We set
\[
\left\langle \Gamma_0 | \Gamma_1 \right\rangle = \left\{ \begin{array}{ll}
1 & \text{if the result of rounding the curves on the cylinder} \\ & \quad \text{is a single connected curve on the sphere;} \\
0 & \text{if the result is disconnected.}
\end{array} \right.
\]

Note the asymmetry: we require that $\Gamma_0$ go on the bottom, and $\Gamma_1$ on the top.

The example drawn above in section \ref{sec:curves_on_cylinders} shows that
\[
\langle \Gamma_{pq} | \Gamma_{qp} \rangle = 1.
\]
The finger move we drew above shows that
\[
\langle a_{q,i}^\dagger \Gamma_0 | \Gamma_1 \rangle = \langle \Gamma_0 | a_{q,i} \Gamma_1 \rangle.
\]
Similar finger moves show that annihilation and creation operators on chord diagrams are adjoint --- just as on chessboards.
\begin{align*}
\langle a_{p,i} \Gamma_0 | \Gamma_1 \rangle &= \langle \Gamma_0 | a_{p,i}^* \Gamma_1 \rangle \\
\langle a_{q,i+1} \Gamma_0 | \Gamma_1 \rangle &= \langle \Gamma_0 | a_{q,i}^\dagger \Gamma_1 \rangle \\
\langle a_{p,i}^* \Gamma_0 | \Gamma_1 \rangle &= \langle \Gamma_0 | a_{p,i+1} \Gamma_1 \rangle.
\end{align*}

One can prove that this bilinear form is isomorphic to the one defined on chessboards.
\begin{thm}[\cite{Me09Paper, Me10_Sutured_TQFT}]
For any two chessboards/words $w_0, w_1$,
\[
\langle w_0 | w_1 \rangle = \langle \Gamma_{w_0} | \Gamma_{w_1} \rangle.
\]
\end{thm}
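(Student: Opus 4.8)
The plan is to prove the identity by induction on the common length $n = |w_0| = |w_1|$, exploiting the principle that the two bilinear forms transform in exactly the same way under the creation and annihilation operators. First observe that if $|w_0| \neq |w_1|$ both sides vanish: on the chessboard side the number of squares is preserved by pawn moves, and on the chord side the diagrams $\Gamma_{w_0}$ and $\Gamma_{w_1}$ have different numbers of boundary points and so cannot be glued into the cylinder. So we may assume $|w_0| = |w_1| = n$ and induct on $n$. The base case $n = 0$ reads $\langle \emptyset | \emptyset \rangle = 1 = \langle \Gamma_\emptyset | \Gamma_\emptyset \rangle$: zero moves carry the empty chessboard to itself, and inserting the one-chord diagram $\Gamma_\emptyset$ at the top and bottom of the cylinder plainly produces a single connected curve.

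For the inductive step, let $x \in \{p,q\}$ be the first letter of $w_0$, so $w_0 = x w_0'$ with $|w_0'| = n-1$. By the definition of $\Gamma_w$ together with the coherence Proposition, $\Gamma_{w_0} = a_{p,0}^* \Gamma_{w_0'}$ if $x = p$, and $\Gamma_{w_0} = a_{q,0}^\dagger \Gamma_{w_0'}$ if $x = q$. The finger-move adjoint relations for chord diagrams now move this initial creation operator into the other slot of the pairing, producing an annihilation operator $A$ acting on $\Gamma_{w_1}$: namely $A = a_{p,1}$ when $x = p$ (since $\langle a_{p,0}^* \Gamma_0 | \Gamma_1 \rangle = \langle \Gamma_0 | a_{p,1} \Gamma_1 \rangle$) and $A = a_{q,0}$ when $x = q$ (since $\langle a_{q,0}^\dagger \Gamma_0 | \Gamma_1 \rangle = \langle \Gamma_0 | a_{q,0} \Gamma_1 \rangle$). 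By coherence applied to annihilation operators --- with the convention that any diagram carrying a closed loop is $0$ --- we have $A \Gamma_{w_1} = \Gamma_{A w_1}$, where on the chessboard side $a_{p,1} w_1$ deletes the first pawn and $a_{q,0} w_1$ deletes an initial anti-pawn, yielding a word of length $n-1$ or the value $0$. If $A w_1 = 0$, then $\langle \Gamma_{w_0} | \Gamma_{w_1}\rangle = 0$ and, by the matching chessboard relation below, $\langle w_0 | w_1 \rangle = \langle w_0' | A w_1 \rangle = 0$ as well. Otherwise the inductive hypothesis gives $\langle \Gamma_{w_0'} | \Gamma_{A w_1}\rangle = \langle w_0' | A w_1 \rangle$, and the chessboard adjoint relations --- which by the ``round-up of adjoints'' pair $a_{p,0}^*$ with $a_{p,1}$ and $a_{q,0}^\dagger$ with $a_{q,0}$ in exactly the same manner as the finger moves --- run the computation back: $\langle w_0' | a_{p,1} w_1 \rangle = \langle a_{p,0}^* w_0' | w_1 \rangle = \langle w_0 | w_1 \rangle$, and likewise for $x = q$. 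This closes the induction.

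The genuinely substantive content is not the bookkeeping above but the structural facts it consumes, and the main obstacle is establishing the coherence of the operators with the $\Gamma$-construction \emph{together with} the agreement of the two notions of ``zero''. Concretely, one must verify that annihilating a pawn or an anti-pawn ``in the wrong place'', which returns the chessboard error, corresponds exactly to the chord-diagram annihilation creating a closed loop, and conversely. This requires knowing precisely which boundary points of $\Gamma_w$ are joined by a chord, which is cleanest via the ski-slope / square decomposition: each letter of $w$ contributes one standard square of chords, one reads off the local chord pattern directly, and in particular $a_{p,0}^*$ always produces an innermost chord whose removal by an ill-placed annihilation closes a bigon. Alongside this one needs that the finger moves on the cylinder genuinely establish the listed chord-diagram adjoint relations, and that these coincide with the chessboard relations obtained by iterating the $*$- and $\dagger$-adjoints. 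Granted all of this, the equality of the two forms is forced.
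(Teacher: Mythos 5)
Your argument is sound, but note that this note never proves the theorem itself --- it is stated with citations to \cite{Me09Paper, Me10_Sutured_TQFT} --- so the honest comparison is with the machinery the paper does set up rather than with a written-out proof. Your induction consumes exactly that machinery: the identity $\Gamma_{xw'} = a_{p,0}^*\Gamma_{w'}$ or $a_{q,0}^\dagger \Gamma_{w'}$ (this is just the definition of $\Gamma_w$; no coherence needed there), the finger-move adjunctions $\langle a_{p,0}^*\Gamma_0|\Gamma_1\rangle = \langle \Gamma_0|a_{p,1}\Gamma_1\rangle$ and $\langle a_{q,0}^\dagger\Gamma_0|\Gamma_1\rangle = \langle \Gamma_0|a_{q,0}\Gamma_1\rangle$, the matching chessboard adjunctions $a_{p,0}^{**}=a_{p,1}$ and $a_{q,0}^{\dagger\dagger\,-1}$-type relation $\langle a_{q,0}^\dagger x|y\rangle=\langle x|a_{q,0}y\rangle$, and the coherence $A\Gamma_{w_1}=\Gamma_{Aw_1}$ for the annihilation operators with the two ``zero'' conventions identified. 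You correctly isolate that last point as the real content, and it does check out: from the ski-slope description, $\Gamma_w$ contains a chord joining the points labelled $-1$ and $-2$ precisely when $w$ contains no $p$ (if there is a first pawn, $-1$ and $-2$ are its two points, and chords never join the two points of a single letter; if there is none, the bottom chord is $\{2n,2n+1\}\equiv\{-2,-1\}$), and $\Gamma_w$ contains a chord joining $0$ and $-1$ precisely when $w$ begins with $p$; these match exactly the chessboard errors of $a_{p,1}$ and $a_{q,0}$. Your one muddled sentence is the parenthetical about ``$a_{p,0}^*$ always produces an innermost chord whose removal by an ill-placed annihilation closes a bigon'' --- the clean statement is the chord-location fact just given, and you should also record that when no loop is created, closing off $\{-1,-2\}$ (resp.\ $\{0,-1\}$ shifted appropriately) merges the two chords adjacent to the deleted square, which is visibly the square-decomposition picture of deleting that pawn or anti-pawn. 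Minor quibbles: the opening remark that both sides ``vanish'' for words of different lengths is really a statement that the pairings are only defined for equal lengths, and the finger-move adjunction in the error case implicitly uses that a configuration containing a closed loop glues up disconnected, which is consistent with the convention but worth a sentence. With those points made explicit, the induction is complete and is a perfectly good self-contained route to the theorem.
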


\[
\begin{tikzpicture}
\draw (0,0) node {$(w_0, w_1)$};
\draw (5,0) node {$(\Gamma_{w_0}, \Gamma_{w_1})$};
\draw (0,-3) node {$\langle w_0 | w_1 \rangle$};
\draw (5,-3) node {$\langle \Gamma_{w_0} | \Gamma_{w_1} \rangle$};
\draw [->,decorate, decoration={snake,amplitude=.4mm,segment length=2mm,post length=1mm}] (1,0) -- (4,0)
	node [above, align=center, midway] {Draw\\diagrams};
\draw [<->]
(1,-3) -- (4,-3)
	node [above, align=center, midway] {$=$};
\draw [->] (0,-0.4) -- (0,-2.6)
	node [left, align=center, midway] {``Pawn dynamics''\\``inner product''\\bilinear form};
\draw [->] (5,-0.4) -- (5,-2.6) node [right, align=center, midway] {Insert into cylinder\\``inner product''\\bilinear form};
\end{tikzpicture}
\]

That is, the property of pawns on a chessboard being able to move from one setup to another, is precisely the property of curves on the cylinder joining up to give a single connected curve.

Having seen this, it is perhaps more plausible why repeated adjoints of chessboard operators might be periodic.

\subsection{Bypass discs and $120^\circ$ rotations}

Suppose we have a chord diagram $\Gamma$ on a disc $D$, and we consider a sub-disc $B \subset D$ on which the chord diagram appears as shown.
\[
\begin{tikzpicture}[
scale=0.9, 
suture/.style={thick, draw=red},
boundary/.style={ultra thick},
vertex/.style={draw=red, fill=red}]
\coordinate (0) at (90:1);
\coordinate (1) at (30:1);
\coordinate (2) at (-30:1);
\coordinate (3) at (-90:1);
\coordinate (4) at (-150:1);
\coordinate (5) at (150:1);
\filldraw[fill=black!10!white, draw=none] (0) arc (90:30:1) to [bend right=90] (2) arc (-30:-90:1) -- cycle;
\filldraw[fill=black!10!white, draw=none] (4) to [bend right=90] (5) arc (150:210:1);
\draw [boundary] (0,0) circle (1 cm);
\draw [suture] (1) to [bend right=90] (2);
\draw [suture] (0) -- (3);
\draw [suture] (4) to [bend right=90] (5);
\end{tikzpicture}
\]
Such a disc, on which the chord diagram restricts to $3$ parallel arcs, is called a \emph{bypass disc}. There are two natural ways to adjust this chord diagram, consistent with the colours, which amount to $120^\circ$ rotations. These operations are called \emph{bypass surgeries}.
\[
\begin{tikzpicture}[
scale=0.9, 
suture/.style={thick, draw=red},
boundary/.style={ultra thick},
vertex/.style={draw=red, fill=red}]
\coordinate (0a) at ($ (90:1) + (-4,0) $);
\coordinate (1a) at ($ (30:1) + (-4,0) $);
\coordinate (2a) at ($ (-30:1) + (-4,0) $);
\coordinate (3a) at ($ (-90:1) + (-4,0) $);
\coordinate (4a) at ($ (-150:1) + (-4,0) $);
\coordinate (5a) at ($ (150:1) + (-4,0) $);
\filldraw[fill=black!10!white, draw=none] (4a) arc (210:150:1) to [bend right=90] (0a) arc (90:30:1) -- cycle;
\filldraw[fill=black!10!white, draw=none] (2a) to [bend right=90] (3a) arc (-90:-30:1);
\draw [boundary] (-4,0) circle (1 cm);
\draw [suture] (5a) to [bend right=90] (0a);
\draw [suture] (4a) -- (1a);
\draw [suture] (2a) to [bend right=90] (3a);
\draw (-4,-2) node {$\Gamma'$};
\coordinate (0) at (90:1);
\coordinate (1) at (30:1);
\coordinate (2) at (-30:1);
\coordinate (3) at (-90:1);
\coordinate (4) at (-150:1);
\coordinate (5) at (150:1);
\filldraw[fill=black!10!white, draw=none] (0) arc (90:30:1) to [bend right=90] (2) arc (-30:-90:1) -- cycle;
\filldraw[fill=black!10!white, draw=none] (4) to [bend right=90] (5) arc (150:210:1);
\draw [boundary] (0,0) circle (1 cm);
\draw [suture] (1) to [bend right=90] (2);
\draw [suture] (0) -- (3);
\draw [suture] (4) to [bend right=90] (5);
\draw (0,-2) node {$\Gamma$};
\coordinate (0b) at ($ (90:1) + (4,0) $);
\coordinate (1b) at ($ (30:1) + (4,0) $);
\coordinate (2b) at ($ (-30:1) + (4,0) $);
\coordinate (3b) at ($ (-90:1) + (4,0) $);
\coordinate (4b) at ($ (-150:1) + (4,0) $);
\coordinate (5b) at ($ (150:1) + (4,0) $);
\filldraw[fill=black!10!white, draw=none] (2b) arc (-30:-90:1) to [bend right=90] (4b) arc (210:150:1) -- cycle;
\filldraw[fill=black!10!white, draw=none] (0b) to [bend right=90] (1b) arc (30:90:1);
\draw [boundary] (4,0) circle (1 cm);
\draw [suture] (3b) to [bend right=90] (4b);
\draw [suture] (2b) -- (5b);
\draw [suture] (0b) to [bend right=90] (1b);
\draw (4,-2) node {$\Gamma''$};
\end{tikzpicture}
\]
With these surgeries on $B \subset D$, we have three chord diagrams on $D$, which we denote $\Gamma, \Gamma', \Gamma''$. We consider inserting $\Gamma, \Gamma', \Gamma''$ successively into one end of a cylinder, with some other chord diagram $\Gamma_1$ on the other end. We obtain three sets of curves on the cylinder.

Below is drawn a possible arrangment of curves on the cylinder. We draw what happens inside $B$ inside a circle, and what happens outside $B$ outside the circle --- this does not necessarily correspond to the top and bottom of the cylinder.
\[
\begin{tikzpicture}[
scale=0.9, 
suture/.style={thick, draw=red},
boundary/.style={ultra thick},
vertex/.style={draw=red, fill=red}]
\coordinate (0a) at ($ (90:1) + (-4,0) $);
\coordinate (1a) at ($ (30:1) + (-4,0) $);
\coordinate (2a) at ($ (-30:1) + (-4,0) $);
\coordinate (3a) at ($ (-90:1) + (-4,0) $);
\coordinate (4a) at ($ (-150:1) + (-4,0) $);
\coordinate (5a) at ($ (150:1) + (-4,0) $);
\filldraw[fill=black!10!white, draw=none] (4a) arc (210:150:1) to [bend right=90] (0a) arc (90:30:1) -- cycle;
\filldraw[fill=black!10!white, draw=none] (2a) to [bend right=90] (3a) arc (-90:-30:1);
\draw [boundary] (-4,0) circle (1 cm);
\draw [suture] (5a) to [bend right=90] (0a);
\draw [suture] (4a) -- (1a);
\draw [suture] (2a) to [bend right=90] (3a);
\draw [suture] (0a) .. controls ($ (-4,0) + (90:2) $) and ($ (-4,0) + (1.8,1) $) .. ($ (-4,0) + (1.8,0) $) .. controls ($ (-4,0) + (1.8,-1) $) and ($ (-4,0) + (-90:2) $) .. (3a);
\draw [suture] (1a) .. controls ($ (1a) + (30:1) $) and ($ (2a) + (-30:1) $) .. (2a);
\draw [suture] (4a) .. controls ($ (4a) + (-150:1) $) and ($ (5a) + (150:1) $) .. (5a);
\draw (-4,-2) node {$\langle \Gamma' | \Gamma_1 \rangle$};
\coordinate (0) at (90:1);
\coordinate (1) at (30:1);
\coordinate (2) at (-30:1);
\coordinate (3) at (-90:1);
\coordinate (4) at (-150:1);
\coordinate (5) at (150:1);
\filldraw[fill=black!10!white, draw=none] (0) arc (90:30:1) to [bend right=90] (2) arc (-30:-90:1) -- cycle;
\filldraw[fill=black!10!white, draw=none] (4) to [bend right=90] (5) arc (150:210:1);
\draw [boundary] (0,0) circle (1 cm);
\draw [suture] (1) to [bend right=90] (2);
\draw [suture] (0) -- (3);
\draw [suture] (4) to [bend right=90] (5);
\draw [suture] (0) .. controls ($ (0,0) + (90:2) $) and ($ (0,0) + (1.8,1) $) .. ($ (0,0) + (1.8,0) $) .. controls ($ (0,0) + (1.8,-1) $) and ($ (0,0) + (-90:2) $) .. (3);
\draw [suture] (1) .. controls ($ (1) + (30:1) $) and ($ (2) + (-30:1) $) .. (2);
\draw [suture] (4) .. controls ($ (4) + (-150:1) $) and ($ (5) + (150:1) $) .. (5);
\draw (0,-2) node {$\langle \Gamma | \Gamma_1 \rangle$};
\coordinate (0b) at ($ (90:1) + (4,0) $);
\coordinate (1b) at ($ (30:1) + (4,0) $);
\coordinate (2b) at ($ (-30:1) + (4,0) $);
\coordinate (3b) at ($ (-90:1) + (4,0) $);
\coordinate (4b) at ($ (-150:1) + (4,0) $);
\coordinate (5b) at ($ (150:1) + (4,0) $);
\filldraw[fill=black!10!white, draw=none] (2b) arc (-30:-90:1) to [bend right=90] (4b) arc (210:150:1) -- cycle;
\filldraw[fill=black!10!white, draw=none] (0b) to [bend right=90] (1b) arc (30:90:1);
\draw [boundary] (4,0) circle (1 cm);
\draw [suture] (3b) to [bend right=90] (4b);
\draw [suture] (2b) -- (5b);
\draw [suture] (0b) to [bend right=90] (1b);
\draw [suture] (0b) .. controls ($ (4,0) + (90:2) $) and ($ (4,0) + (1.8,1) $) .. ($ (4,0) + (1.8,0) $) .. controls ($ (4,0) + (1.8,-1) $) and ($ (4,0) + (-90:2) $) .. (3b);
\draw [suture] (1b) .. controls ($ (1b) + (30:1) $) and ($ (2b) + (-30:1) $) .. (2b);
\draw [suture] (4b) .. controls ($ (4b) + (-150:1) $) and ($ (5b) + (150:1) $) .. (5b);
\draw (4,-2) node {$\langle \Gamma'' | \Gamma_1 \rangle$};
\end{tikzpicture}
\]
We find here that, of the three sets of curves on the cylinder, $2$ of them are connected. The key observation is that \emph{whatever chord diagrams we have for $\Gamma$ and $\Gamma_1$, out of the $3$ sets of curves obtained on the cylinder, the number of sets of curves which is connected is even.}

\begin{prop}
Let $\Gamma, \Gamma_1$ be chord diagrams on the disc $D$ with the same number of chords. Then, with $\Gamma', \Gamma''$ obtained from $\Gamma$ as above,
\[
\langle \Gamma | \Gamma_1 \rangle  +  \langle \Gamma' | \Gamma_1 \rangle + \langle \Gamma'' | \Gamma_1 \rangle = 0.
\]
In other words,
\[
\langle \quad \Gamma + \Gamma' + \Gamma'' \quad | \quad \Gamma_1 \quad  \rangle = 0.
\]
\end{prop}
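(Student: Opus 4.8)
The plan is to \emph{localize} the problem inside the bypass disc $B$. Since $\Gamma$, $\Gamma'$, $\Gamma''$ differ only on $B$, inserting each into one end of the cylinder against a fixed $\Gamma_1$ and rounding corners produces three $1$-manifolds on the sphere $S^2=\partial(\text{cylinder})$ that agree everywhere outside $B$, so the statement should collapse to a finite combinatorial check on $B$. Concretely: $B$ meets the chords in six boundary points $z_1,\dots,z_6$ in cyclic order, and inside $B$ each of $\Gamma,\Gamma',\Gamma''$ consists of exactly three arcs among the $z_i$; by hypothesis these three local pictures are the three $120^\circ$ rotations of the standard triple of parallel arcs, giving three matchings $X_0,X_1,X_2$ of the $z_i$. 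In the complementary disc $D'=S^2\setminus\operatorname{int}B$ the three configurations coincide, forming one fixed properly embedded $1$-manifold $\Theta$ with $\partial\Theta=\{z_1,\dots,z_6\}$; being a $1$-manifold in a disc, $\Theta$ is a disjoint union of closed loops $L$ together with arcs realizing a \emph{non-crossing} perfect matching $M$ of the $z_i$ (non-crossing for the cyclic order on $\partial D'$, which is the reverse of that on $\partial B$). Hence for each $j$ the full curve system on $S^2$ is $(X_j\cup M)\sqcup L$, where $X_j\cup M$ denotes the two matchings glued along $z_1,\dots,z_6$; crucially $M$ and $L$ are independent of the choice among $\Gamma,\Gamma',\Gamma''$.

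Now if $L\neq\emptyset$ then all three systems are disconnected and $\langle\Gamma|\Gamma_1\rangle=\langle\Gamma'|\Gamma_1\rangle=\langle\Gamma''|\Gamma_1\rangle=0$. If $L=\emptyset$, then $\langle\Gamma^{(j)}|\Gamma_1\rangle=1$ exactly when $X_j\cup M$ is a single circle, and it remains to prove: \emph{for any non-crossing perfect matching $M$ of six cyclically ordered points and any three matchings $X_0,X_1,X_2$ related to one another by $120^\circ$ rotation, the number of $j$ with $X_j\cup M$ connected is even.} There are only $C_3=5$ non-crossing perfect matchings of six points; under $120^\circ$ rotation they form one orbit of size three together with two fixed matchings, so up to symmetry only three need checking. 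For each I would list $X_0\cup M,\,X_1\cup M,\,X_2\cup M$ and count cycles (a union of two perfect matchings on six points is always a disjoint union of one, two, or three cycles). The outcome: the number of connected unions is always $0$ or $2$. A rotation-fixed $M$ shares exactly one arc with each $X_j$, so every union contains a doubled arc and is disconnected; when $M$ lies in the size-three orbit it coincides with one $X_j$ (a disconnected union) and is arc-disjoint from the other two, whose unions turn out to be hexagons. This yields $\langle\Gamma|\Gamma_1\rangle+\langle\Gamma'|\Gamma_1\rangle+\langle\Gamma''|\Gamma_1\rangle=0$ in $\Z_2$.

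The only delicate point is the topological bookkeeping in the first paragraph: checking that everything outside $B$ genuinely reduces to a single non-crossing matching $M$ (plus closed loops) independent of which of $\Gamma,\Gamma',\Gamma''$ we use, and that connectivity of the whole sphere picture depends only on $X_j\cup M$. I do not anticipate a real obstacle, only care with the cylinder-and-corner-rounding conventions; after that, the five-case analysis and the resulting ``$0$ or $2$'' parity are routine. One can also recognize this statement as the elementary shadow of the bypass exact triangle in sutured Floer homology.
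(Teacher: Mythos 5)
Your proof is correct and takes essentially the same approach as the paper's: the paper's (very terse) proof just exhibits the two possible pictures for how the exterior of the bypass disc $B$ joins up the six endpoints, the generic arrangement giving $1+0+1=0$ and the degenerate one giving $0+0+0=0$. Your write-up fills in what those pictures leave implicit --- the fixed exterior $1$-manifold, the possibility of closed loops, and the enumeration of the five non-crossing matchings --- and the three-case check you outline (rotation-fixed $M$ gives $0+0+0$, orbit $M$ gives $0+1+1$) does come out exactly as you claim.
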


\begin{proof}
The arrangement is either as in the above diagram, giving $1+0+1 = 0$ mod $2$, or more degenerate, giving $0+0+0=0$. See below.
\[
\begin{tikzpicture}[
scale=0.9, 
suture/.style={thick, draw=red},
boundary/.style={ultra thick},
vertex/.style={draw=red, fill=red}]
\coordinate (0a) at ($ (90:1) + (-4,0) $);
\coordinate (1a) at ($ (30:1) + (-4,0) $);
\coordinate (2a) at ($ (-30:1) + (-4,0) $);
\coordinate (3a) at ($ (-90:1) + (-4,0) $);
\coordinate (4a) at ($ (-150:1) + (-4,0) $);
\coordinate (5a) at ($ (150:1) + (-4,0) $);
\filldraw[fill=black!10!white, draw=none] (4a) arc (210:150:1) to [bend right=90] (0a) arc (90:30:1) -- cycle;
\filldraw[fill=black!10!white, draw=none] (2a) to [bend right=90] (3a) arc (-90:-30:1);
\draw [boundary] (-4,0) circle (1 cm);
\draw [suture] (5a) to [bend right=90] (0a);
\draw [suture] (4a) -- (1a);
\draw [suture] (2a) to [bend right=90] (3a);
\draw [suture] (0a) .. controls ($ (-4,0) + (90:2) $) and ($ (-4,0) + (1.8,1) $) .. ($ (-4,0) + (1.8,0) $) .. controls ($ (-4,0) + (1.8,-1) $) and ($ (-4,0) + (-90:2) $) .. (3a);
\draw [suture] (1a) .. controls ($ (1a) + (30:1) $) and ($ (2a) + (-30:1) $) .. (2a);
\draw [suture] (4a) .. controls ($ (4a) + (-150:1) $) and ($ (5a) + (150:1) $) .. (5a);
\draw (-4,-2) node {$1$};
\draw (-2,-2) node {$+$};
\coordinate (0) at (90:1);
\coordinate (1) at (30:1);
\coordinate (2) at (-30:1);
\coordinate (3) at (-90:1);
\coordinate (4) at (-150:1);
\coordinate (5) at (150:1);
\filldraw[fill=black!10!white, draw=none] (0) arc (90:30:1) to [bend right=90] (2) arc (-30:-90:1) -- cycle;
\filldraw[fill=black!10!white, draw=none] (4) to [bend right=90] (5) arc (150:210:1);
\draw [boundary] (0,0) circle (1 cm);
\draw [suture] (1) to [bend right=90] (2);
\draw [suture] (0) -- (3);
\draw [suture] (4) to [bend right=90] (5);
\draw [suture] (0) .. controls ($ (0,0) + (90:2) $) and ($ (0,0) + (1.8,1) $) .. ($ (0,0) + (1.8,0) $) .. controls ($ (0,0) + (1.8,-1) $) and ($ (0,0) + (-90:2) $) .. (3);
\draw [suture] (1) .. controls ($ (1) + (30:1) $) and ($ (2) + (-30:1) $) .. (2);
\draw [suture] (4) .. controls ($ (4) + (-150:1) $) and ($ (5) + (150:1) $) .. (5);
\draw (0,-2) node {$0$};
\draw (2,-2) node {$+$};
\coordinate (0b) at ($ (90:1) + (4,0) $);
\coordinate (1b) at ($ (30:1) + (4,0) $);
\coordinate (2b) at ($ (-30:1) + (4,0) $);
\coordinate (3b) at ($ (-90:1) + (4,0) $);
\coordinate (4b) at ($ (-150:1) + (4,0) $);
\coordinate (5b) at ($ (150:1) + (4,0) $);
\filldraw[fill=black!10!white, draw=none] (2b) arc (-30:-90:1) to [bend right=90] (4b) arc (210:150:1) -- cycle;
\filldraw[fill=black!10!white, draw=none] (0b) to [bend right=90] (1b) arc (30:90:1);
\draw [boundary] (4,0) circle (1 cm);
\draw [suture] (3b) to [bend right=90] (4b);
\draw [suture] (2b) -- (5b);
\draw [suture] (0b) to [bend right=90] (1b);
\draw [suture] (0b) .. controls ($ (4,0) + (90:2) $) and ($ (4,0) + (1.8,1) $) .. ($ (4,0) + (1.8,0) $) .. controls ($ (4,0) + (1.8,-1) $) and ($ (4,0) + (-90:2) $) .. (3b);
\draw [suture] (1b) .. controls ($ (1b) + (30:1) $) and ($ (2b) + (-30:1) $) .. (2b);
\draw [suture] (4b) .. controls ($ (4b) + (-150:1) $) and ($ (5b) + (150:1) $) .. (5b);
\draw (4,-2) node {$1$};
\draw (6,-2) node {$=$};
\draw (8,-2) node {$0$};
\end{tikzpicture}
\]
\[
\begin{tikzpicture}[
scale=0.9, 
suture/.style={thick, draw=red},
boundary/.style={ultra thick},
vertex/.style={draw=red, fill=red}]
\coordinate (0a) at ($ (90:1) + (-4,0) $);
\coordinate (1a) at ($ (30:1) + (-4,0) $);
\coordinate (2a) at ($ (-30:1) + (-4,0) $);
\coordinate (3a) at ($ (-90:1) + (-4,0) $);
\coordinate (4a) at ($ (-150:1) + (-4,0) $);
\coordinate (5a) at ($ (150:1) + (-4,0) $);
\filldraw[fill=black!10!white, draw=none] (4a) arc (210:150:1) to [bend right=90] (0a) arc (90:30:1) -- cycle;
\filldraw[fill=black!10!white, draw=none] (2a) to [bend right=90] (3a) arc (-90:-30:1);
\draw [boundary] (-4,0) circle (1 cm);
\draw [suture] (5a) to [bend right=90] (0a);
\draw [suture] (4a) -- (1a);
\draw [suture] (2a) to [bend right=90] (3a);
\draw [suture] (0a) .. controls ($ (0a) + (90:1) $) and ($ (1a) + (30:1) $) .. (1a);
\draw [suture] (2a) .. controls ($ (2a) + (-30:1) $) and ($ (3a) + (-90:1) $) .. (3a);
\draw [suture] (4a) .. controls ($ (4a) + (-150:1) $) and ($ (5a) + (150:1) $) .. (5a);
\draw (-4,-2) node {$0$};
\draw (-2,-2) node {$+$};
\coordinate (0) at (90:1);
\coordinate (1) at (30:1);
\coordinate (2) at (-30:1);
\coordinate (3) at (-90:1);
\coordinate (4) at (-150:1);
\coordinate (5) at (150:1);
\filldraw[fill=black!10!white, draw=none] (0) arc (90:30:1) to [bend right=90] (2) arc (-30:-90:1) -- cycle;
\filldraw[fill=black!10!white, draw=none] (4) to [bend right=90] (5) arc (150:210:1);
\draw [boundary] (0,0) circle (1 cm);
\draw [suture] (1) to [bend right=90] (2);
\draw [suture] (0) -- (3);
\draw [suture] (4) to [bend right=90] (5);
\draw [suture] (0) .. controls ($ (0) + (90:1) $) and ($ (1) + (30:1) $) .. (1);
\draw [suture] (2) .. controls ($ (2) + (-30:1) $) and ($ (3) + (-90:1) $) .. (3);
\draw [suture] (4) .. controls ($ (4) + (-150:1) $) and ($ (5) + (150:1) $) .. (5);
\draw (0,-2) node {$0$};
\draw (2,-2) node {$+$};
\coordinate (0b) at ($ (90:1) + (4,0) $);
\coordinate (1b) at ($ (30:1) + (4,0) $);
\coordinate (2b) at ($ (-30:1) + (4,0) $);
\coordinate (3b) at ($ (-90:1) + (4,0) $);
\coordinate (4b) at ($ (-150:1) + (4,0) $);
\coordinate (5b) at ($ (150:1) + (4,0) $);
\filldraw[fill=black!10!white, draw=none] (2b) arc (-30:-90:1) to [bend right=90] (4b) arc (210:150:1) -- cycle;
\filldraw[fill=black!10!white, draw=none] (0b) to [bend right=90] (1b) arc (30:90:1);
\draw [boundary] (4,0) circle (1 cm);
\draw [suture] (3b) to [bend right=90] (4b);
\draw [suture] (2b) -- (5b);
\draw [suture] (0b) to [bend right=90] (1b);
\draw [suture] (0b) .. controls ($ (0b) + (90:1) $) and ($ (1b) + (30:1) $) .. (1b);
\draw [suture] (2b) .. controls ($ (2b) + (-30:1) $) and ($ (3b) + (-90:1) $) .. (3b);
\draw [suture] (4b) .. controls ($ (4b) + (-150:1) $) and ($ (5b) + (150:1) $) .. (5b);
\draw (4,-2) node {$0$};
\draw (6,-2) node {$=$};
\draw (8,-2) node {$0$};
\end{tikzpicture}
\]
\end{proof}

\subsection{A vector space of chord diagrams}

We have seem how some chord diagrams can be drawn from chessboards. But this is a very small class of chord diagrams; there are many more chord diagrams than chessboard / ski slope chord diagrams.

On the other hand, our creation and annihilation operators are defined on any chord diagram, not just chessboard ones. It's s easy to draw in extra chords, or close off chords, on any chord diagram, not just those drawn from chessboards. The adjoint relations from finger moves also work generally, and the bilinear form / ``inner product'' defined from insertion into a cylinder also works for any chord diagram.

We can relate general chord diagrams, to chessboard chord diagrams, with the following observations.

If our ``inner product'' is supposed to be nondegenerate, then based on our previous theorem, we should set
\[
\Gamma + \Gamma' + \Gamma'' = 0
\]
for any triple of chord diagrams $\Gamma, \Gamma', \Gamma''$ obtained by byass surgeries.

This relation is called the \emph{bypass relation} and can be written schematically as

\[
\begin{tikzpicture}[
scale=0.9, 
suture/.style={thick, draw=red},
boundary/.style={ultra thick},
vertex/.style={draw=red, fill=red}]
\coordinate (0a) at ($ (90:1) + (-4,0) $);
\coordinate (1a) at ($ (30:1) + (-4,0) $);
\coordinate (2a) at ($ (-30:1) + (-4,0) $);
\coordinate (3a) at ($ (-90:1) + (-4,0) $);
\coordinate (4a) at ($ (-150:1) + (-4,0) $);
\coordinate (5a) at ($ (150:1) + (-4,0) $);
\filldraw[fill=black!10!white, draw=none] (4a) arc (210:150:1) to [bend right=90] (0a) arc (90:30:1) -- cycle;
\filldraw[fill=black!10!white, draw=none] (2a) to [bend right=90] (3a) arc (-90:-30:1);
\draw [boundary] (-4,0) circle (1 cm);
\draw [suture] (5a) to [bend right=90] (0a);
\draw [suture] (4a) -- (1a);
\draw [suture] (2a) to [bend right=90] (3a);
\draw (-2,0) node {$+$};
\coordinate (0) at (90:1);
\coordinate (1) at (30:1);
\coordinate (2) at (-30:1);
\coordinate (3) at (-90:1);
\coordinate (4) at (-150:1);
\coordinate (5) at (150:1);
\filldraw[fill=black!10!white, draw=none] (0) arc (90:30:1) to [bend right=90] (2) arc (-30:-90:1) -- cycle;
\filldraw[fill=black!10!white, draw=none] (4) to [bend right=90] (5) arc (150:210:1);
\draw [boundary] (0,0) circle (1 cm);
\draw [suture] (1) to [bend right=90] (2);
\draw [suture] (0) -- (3);
\draw [suture] (4) to [bend right=90] (5);
\draw (2,0) node {$+$};
\coordinate (0b) at ($ (90:1) + (4,0) $);
\coordinate (1b) at ($ (30:1) + (4,0) $);
\coordinate (2b) at ($ (-30:1) + (4,0) $);
\coordinate (3b) at ($ (-90:1) + (4,0) $);
\coordinate (4b) at ($ (-150:1) + (4,0) $);
\coordinate (5b) at ($ (150:1) + (4,0) $);
\filldraw[fill=black!10!white, draw=none] (2b) arc (-30:-90:1) to [bend right=90] (4b) arc (210:150:1) -- cycle;
\filldraw[fill=black!10!white, draw=none] (0b) to [bend right=90] (1b) arc (30:90:1);
\draw [boundary] (4,0) circle (1 cm);
\draw [suture] (3b) to [bend right=90] (4b);
\draw [suture] (2b) -- (5b);
\draw [suture] (0b) to [bend right=90] (1b);
\draw (6,0) node {$=$};
\draw (8,0) node {$0$};
\end{tikzpicture}
\]

We therefore define a vector space $V_n$ as follows: $V_n$ is generated over $\Z_2$ by all chord diagrams with $n$ chords, subject to the bypass relation, i.e. the relation that any three chord diagrams related by bypass surgeries sum to zero.
\[
V_n = \frac{ \Z_2 \langle \text{Chord diagrams with $n$ chords} \rangle }{ \text{Bypass relation} }
\]
Before taking a quotient, the vector space is freely generated by chord diagrams, and its dimension is equal to the number of chord diagrams with $n$ chords, which is $C_n$, the $n$'th Catalan number.

After the quotient, something interesting happens. The dimension is reduced to $2^{n-1}$ and a basis is rather familiar.
\begin{thm}[\cite{Me09Paper}]
The $\Z_2$ vector space $V_n$ has dimension $2^{n-1}$ and the diagrams from chessboards of $n-1$ squares form a basis.
\end{thm}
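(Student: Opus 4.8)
The plan is to establish the two inequalities $\dim V_n \geq 2^{n-1}$ (linear independence of the chessboard diagrams) and $\dim V_n \leq 2^{n-1}$ (the chessboard diagrams span), and then conclude that the $2^{n-1}$ chessboard diagrams of $n-1$ squares form a basis. The independence half is where the machinery built so far pays off; the spanning half is the genuinely combinatorial part.

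For linear independence I would use the bilinear form. The preceding Proposition says precisely that the bypass relation lies in the left kernel of $\langle \cdot \mid \cdot \rangle$, so the pairing descends to one with left argument a class in $V_n$. Suppose $\sum_w c_w \Gamma_w = 0$ in $V_n$, the sum being over words $w \in \{p,q\}^{n-1}$ with $c_w \in \Z_2$. Pairing on the right with $\Gamma_{w_1}$ for an arbitrary word $w_1$ and invoking the theorem $\langle \Gamma_{w_0} \mid \Gamma_{w_1} \rangle = \langle w_0 \mid w_1 \rangle$ gives $\sum_w c_w \langle w \mid w_1 \rangle = 0$. Now $\langle \cdot \mid \cdot \rangle$ restricted to chessboards is the ``booleanized'' partial order of pawn motion: $\langle w \mid w \rangle = 1$ for every $w$, and $\langle w_0 \mid w_1 \rangle = 0$ unless $w_0$ precedes $w_1$ in that order. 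Listing the $2^{n-1}$ words according to any linear extension of the order therefore makes the Gram matrix $(\langle w_i \mid w_j \rangle)$ upper unitriangular, hence invertible over $\Z_2$; so all $c_w = 0$ and the $\Gamma_w$ are independent.

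For spanning I would show, by induction on $n$, that modulo the bypass relation every chord diagram with $n$ chords is a $\Z_2$-sum of chessboard diagrams. The base case $n = 1$ is just the vacuum $\Gamma_\emptyset$. For the inductive step, first observe that inserting a fixed chord in a fixed position — applying $a_{p,0}^*$ or $a_{q,0}^\dagger$ — carries bypass triples to bypass triples (the bypass disc may be isotoped to avoid the newly inserted innermost chord), so these operators descend to maps $V_{n-1} \to V_n$ sending chessboard diagrams of $k$ squares to chessboard diagrams of $k+1$ squares. The crux is then a reduction lemma: \emph{modulo the bypass relation, every chord diagram with $n$ chords is a sum of diagrams each lying in the image of $a_{p,0}^*$ or of $a_{q,0}^\dagger$} — i.e.\ each having a short chord joining two adjacent boundary points next to the basepoint. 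Granting this, each summand is a creation operator applied to an $(n-1)$-chord diagram, which by the inductive hypothesis is a sum of chessboard diagrams; applying the operator then writes the summand as a sum of chessboard diagrams, and we are done.

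I expect the reduction lemma to be the main obstacle. I would prove it by induction on a well-founded complexity measure of the diagram — for instance the ``length'' of the chord meeting the boundary point adjacent to the basepoint (the number of endpoints it separates from the basepoint), with ties broken lexicographically by what happens further around the disc. If that chord is already short the diagram is in the required image; otherwise one must locate near it a sub-disc on which the diagram restricts to three parallel arcs — a bypass disc — and apply $\Gamma = \Gamma' + \Gamma''$, checking that both $\Gamma'$ and $\Gamma''$ are strictly smaller in the chosen measure. The delicate points are that such a bypass disc always exists in the non-short case and that the resolution strictly decreases the measure without creating new obstructions elsewhere; the ski-slope picture and the square decomposition of the earlier subsections are the natural bookkeeping device here, since a chessboard diagram is precisely one in ``ski-slope normal form.'' Combining the two halves, the chessboard diagrams of $n-1$ squares are a spanning linearly independent set, hence a basis, and $\dim V_n = 2^{n-1}$.
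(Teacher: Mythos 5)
The paper itself never proves this theorem --- it is imported from \cite{Me09Paper} --- so there is no in-text proof to compare against; but your outline assembles exactly the ingredients this note sets up, and it is essentially the argument of the cited source: linear independence from triangularity of the stacking pairing with respect to the pawn-move partial order, and spanning from a bypass-relation reduction. Your independence half is complete as written: the Proposition gives descent of $\langle \cdot | \cdot \rangle$ in the left slot only, which is all you use; words with different numbers of pawns pair to $0$, so in any linear extension of the order the Gram matrix $(\langle w_i | w_j \rangle)$ is upper unitriangular, hence invertible over $\Z_2$, and all $c_w$ vanish.

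The one step you leave open --- the reduction lemma, with its well-founded measure and lexicographic tie-breaking --- is in fact much easier than you fear: a single bypass move suffices, so no measure is needed. If the chord $c$ through the basepoint $0$ does not already join $0$ to an adjacent point $\pm 1$, then the chords $c_-, c, c_+$ through $-1, 0, 1$ are three distinct chords (a chord from $-1$ to $1$ is impossible, since it would separate $0$ from every other marked point), and a neighbourhood of a boundary-parallel arc crossing $c_-, c, c_+$ near the basepoint is a bypass disc on which they appear as three parallel strands. Of the two surgeries there, one reconnects the strands so that the basepoint lies on an outermost chord from $-1$ to $0$ (so the diagram is in the image of $a_{p,0}^*$), the other so that it lies on an outermost chord from $0$ to $1$ (image of $a_{q,0}^\dagger$); everything outside the bypass disc is untouched. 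Hence, modulo the bypass relation, $\Gamma$ is a sum of two diagrams each obtained by an initial creation operator from an $(n-1)$-chord diagram, and your induction on $n$ --- using the Proposition $\Gamma_{a_{p,0}^* w} = a_{p,0}^* \Gamma_w$ and its $q$-analogue, together with your correct observation that inserting the new outermost chord carries bypass triples to bypass triples --- finishes the spanning half. With that observation supplied, your proposal is a complete and correct proof, matching the route of \cite{Me09Paper}.
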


Indeed, there are $2^{n-1}$ configurations of pawns on a chessboard with $n-1$ squares, and these give the chessboard chord diagrams with $n$ chords. (Each square, again, contributing one bit of information.) Alternatively, this is the $\Z_2$ vector space freely generated by words in $p$ and $q$.

\begin{cor}
\[
V_n \cong \left( p \Z_2 \oplus q \Z_2 \right)^{\otimes (n-1)}.
\]
\end{cor}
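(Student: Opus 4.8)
The plan is to derive the corollary directly from the preceding theorem; the only real work is to identify the combinatorial basis of $V_n$ with the standard basis of the tensor product. First I would recall the elementary fact that if $W_1, \ldots, W_{n-1}$ are $\Z_2$-vector spaces with respective bases $\mathcal{B}_1, \ldots, \mathcal{B}_{n-1}$, then $W_1 \otimes \cdots \otimes W_{n-1}$ has as a basis the set of pure tensors $\{ b_1 \otimes \cdots \otimes b_{n-1} : b_j \in \mathcal{B}_j \}$. Taking each $W_j = p\Z_2 \oplus q\Z_2$ with basis $\{p,q\}$, it follows that $(p\Z_2 \oplus q\Z_2)^{\otimes(n-1)}$ has a basis given by the $2^{n-1}$ pure tensors $x_1 \otimes \cdots \otimes x_{n-1}$ with each $x_j \in \{p,q\}$. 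But such a pure tensor is exactly the same data as a word $w = x_1 x_2 \cdots x_{n-1}$ of length $n-1$ in the letters $p$ and $q$, equivalently a chessboard with $n-1$ squares.

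Next I would invoke the theorem just stated: the chessboard chord diagrams $\Gamma_w$, as $w$ ranges over words of length $n-1$, form a basis of $V_n$. I would then define the $\Z_2$-linear map $\Phi : V_n \to (p\Z_2 \oplus q\Z_2)^{\otimes(n-1)}$ by sending each basis vector $\Gamma_w$, with $w = x_1 \cdots x_{n-1}$, to the pure tensor $x_1 \otimes \cdots \otimes x_{n-1}$. Since $\Phi$ carries one basis bijectively onto another, it is a linear isomorphism, which is the assertion of the corollary. Because $\Phi$ is specified on a basis there is nothing to check for well-definedness, and the bypass relation has already been factored out in the construction of $V_n$, so there is no further compatibility to verify on the domain side.

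For a more conceptual account — one explaining why the tensor-product structure, and not merely the dimension, is the natural one — I would appeal to the square decomposition described earlier. A chessboard chord diagram with $n$ chords cuts, along the green lines, into $n-1$ square blocks, and each block carries exactly one of two admissible chord configurations: the ``pawn'' configuration or the ``anti-pawn'' configuration. Reading these off from top to bottom exhibits $\Gamma_w$ as assembled from $n-1$ independent binary choices, and assigning the $j$th choice to the generator $p$ or $q$ of the $j$th tensor factor makes the bijection above manifest. This is the ``one bit of information per square'' observation, now read as a tensor decomposition.

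I expect no genuine obstacle here: all the substantive content lies in the preceding theorem (the dimension count $2^{n-1}$ and the linear independence of the chessboard diagrams modulo the bypass relation), and the corollary is essentially a translation of that result into the language of tensor products. The one point worth stating carefully is just the standard description of a basis of an iterated tensor product, so that the equality $2^{n-1} = 2^{n-1}$ on the two sides is realised by an explicit bijection of bases rather than a bare coincidence of dimensions.
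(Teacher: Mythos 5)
Your proposal is correct and follows essentially the same route as the paper, which deduces the corollary immediately from the preceding theorem by noting that the basis of chessboard diagrams with $n-1$ squares corresponds bijectively to words of length $n-1$ in $p$ and $q$, i.e.\ to the pure tensors forming the standard basis of $\left( p \Z_2 \oplus q \Z_2 \right)^{\otimes (n-1)}$. Your extra remark about the square decomposition (one bit per square) is precisely the ``each square contributing one bit of information'' observation the paper makes alongside the corollary, so nothing is missing or superfluous.
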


There is much more structure in this vector space, encoding various combinatorial and representation-theoretic properties of chord diagrams. For instance, $V_n$ has $2^{2^{n-1}}$ elements; the $C_n$ of these which correspond to chord diagrams are distributed in a combinatorially interesting way.

To give an idea of how chord diagrams decompose into chessboard diagrams, we give a computation below.

\[
\begin{tikzpicture}[
scale=0.5, 
suture/.style={thick, draw=red},
boundary/.style={ultra thick},
vertex/.style={draw=red, fill=red},
filling/.style={fill=black!10!white, draw=none},
a/.style={xshift=6 cm},
b/.style={xshift=12 cm},
c/.style={xshift=6cm, yshift=-6 cm},
d/.style={xshift=12 cm, yshift=-6 cm},
e/.style={xshift=18 cm, yshift=-6 cm},
f/.style={xshift=24 cm, yshift=-6 cm}]
\coordinate (0) at (2,0);
\coordinate (1) at (4,-1);
\coordinate (-1) at (0,-1);
\coordinate (2) at (4,-2);
\coordinate (-2) at (0,-2);
\coordinate (3) at (4,-3);
\coordinate (-3) at (0,-3);
\coordinate (4) at (4,-4);
\coordinate (-4) at (0,-4);
\coordinate (5) at (2,-5);
\filldraw[filling] (0) -- (4,0) -- (1) arc (90:270:0.5) -- (3) to [bend left=15] (0);
\filldraw[filling] (-1) .. controls (0.5,-1) and (3.5, -4) .. (4) -- (4,-5) -- (5) to [bend right=15] (-2) -- cycle;
\filldraw[filling] (-3) -- (-4) arc (-90:90:0.5);
\draw [suture] (0) to [bend right=15] (3);
\draw [suture] (1) arc (90:270:0.5);
\draw [suture] (-1) .. controls (0.5,-1) and (3.5,-4) .. (4);
\draw [suture] (5) to [bend right=15] (-2);
\draw [suture] (-3) arc (90:-90:0.5);
\draw (0.5,-0.5) -- (3.75,-0.5) -- (3.75, -1.5) -- (0.5,-1.5) -- cycle;
\draw [boundary] (0,0) -- (4,0) -- (4,-5) -- (0,-5) -- cycle;
\foreach \point in {0, 1, 2, 3, 4, 5, -1,-2,-3,-4}
\fill [vertex] (\point) circle (2pt);
\draw (5,-2.5) node {$=$};
\coordinate (0a) at (8,0);
\coordinate (1a) at (10,-1);
\coordinate (-1a) at (6,-1);
\coordinate (2a) at (10,-2);
\coordinate (-2a) at (6,-2);
\coordinate (3a) at (10,-3);
\coordinate (-3a) at (6,-3);
\coordinate (4a) at (10,-4);
\coordinate (-4a) at (6,-4);
\coordinate (5a) at (8,-5);
\filldraw [filling, a] (0a) -- (4,0) -- (1a) arc (90:270:1.5) -- (4,-5) -- (5a) to [bend right=15] (-2a) -- (-1a) to [bend right=15] (0a);
\filldraw [filling, a] (2a) arc (90:270:0.5) -- cycle;
\filldraw [filling, a] (-3a) arc (90:-90:0.5) -- cycle;
\draw [suture, a] (0a) to [bend left=15] (-1a);
\draw [suture, a] (1a) arc (90:270:1.5);
\draw [suture, a] (2a) arc (90:270:0.5);
\draw [suture, a] (-2a) to [bend left=15] (5a);
\draw [suture, a] (-3a) arc (90:-90:0.5);
\draw [a] (0.25,-2.5) -- (3,-2.5) -- (3,-3.5) -- (0.25,-3.5) -- cycle;
\draw [boundary, a] (0,0) -- (4,0) -- (4,-5) -- (0,-5) -- cycle;
\foreach \point in {0a, 1a, 2a, 3a, 4a, 5a, -1a,-2a,-3a,-4a}
\fill [vertex] (\point) circle (2pt);
\draw (11,-2.5) node {$+$};
\coordinate (0b) at (14,0);
\coordinate (1b) at (16,-1);
\coordinate (-1b) at (12,-1);
\coordinate (2b) at (16,-2);
\coordinate (-2b) at (12,-2);
\coordinate (3b) at (16,-3);
\coordinate (-3b) at (12,-3);
\coordinate (4b) at (16,-4);
\coordinate (-4b) at (12,-4);
\coordinate (5b) at (14,-5);
\filldraw [filling, b] (0b) -- (4,0) -- (1b) to [bend left=15] (0b);
\filldraw [filling, b] (-1b) .. controls (0.5,-1) and (3.5,-2) .. (2b) -- (3b) arc (90:270:0.5) -- (4,-5) -- (5b) to [bend right=15] (-2b) -- cycle;
\filldraw [filling, b] (-3b) arc (90:-90:0.5) -- cycle;
\draw [suture, b] (0b) to [bend right=15] (1b);
\draw [suture, b] (-1b) .. controls (0.5,-1) and (3.5,-2) .. (2b);
\draw [suture, b] (-2b) to [bend left=15] (5b);
\draw [suture, b] (3b) arc (90:270:0.5);
\draw [suture, b] (-3b) arc (90:-90:0.5);
\draw [b] (0.25,-3.25) -- (3.75,-3.25) -- (3.75,-3.75) -- (0.25,-3.75) -- cycle;
\draw [boundary, b] (0,0) -- (4,0) -- (4,-5) -- (0,-5) -- cycle;
\foreach \point in {0b, 1b, 2b, 3b, 4b, 5b, -1b,-2b,-3b,-4b}
\fill [vertex] (\point) circle (2pt);
\draw (5,-8.5) node {$=$};
\coordinate (0c) at (8,-6);
\coordinate (1c) at (10,-7);
\coordinate (-1c) at (6,-7);
\coordinate (2c) at (10,-8);
\coordinate (-2c) at (6,-8);
\coordinate (3c) at (10,-9);
\coordinate (-3c) at (6,-9);
\coordinate (4c) at (10,-10);
\coordinate (-4c) at (6,-10);
\coordinate (5c) at (8,-11);
\filldraw [filling, c] (0c) -- (4,0) -- (1c) .. controls (3.5,-1) and (0.5,-4) .. (-4c) -- (-3c) arc (-90:90:0.5) -- (-1c) to [bend right=15] (0c);
\filldraw [filling, c] (2c) arc (90:270:0.5) -- cycle;
\filldraw [filling, c] (4c) -- (4,-5) -- (5c) to [bend left=15] (4c);
\draw [suture, c] (0c) to [bend left=15] (-1c);
\draw [suture, c] (2c) arc (90:270:0.5);
\draw [suture, c] (-2c) arc (90:-90:0.5);
\draw [suture, c] (1c) .. controls (3.5,-1) and (0.5,-4) .. (-4c);
\draw [suture, c] (4c) to [bend right=15] (5c);
\draw [boundary, c] (0,0) -- (4,0) -- (4,-5) -- (0,-5) -- cycle;
\foreach \point in {0c, 1c, 2c, 3c, 4c, 5c, -1c,-2c,-3c,-4c}
\fill [vertex] (\point) circle (2pt);
\draw (11,-8.5) node {$+$};
\coordinate (0d) at (14,-6);
\coordinate (1d) at (16,-7);
\coordinate (-1d) at (12,-7);
\coordinate (2d) at (16,-8);
\coordinate (-2d) at (12,-8);
\coordinate (3d) at (16,-9);
\coordinate (-3d) at (12,-9);
\coordinate (4d) at (16,-10);
\coordinate (-4d) at (12,-10);
\coordinate (5d) at (14,-11);
\filldraw [filling, d] (0d) -- (4,0) -- (1d) .. controls (3.5,-1) and (0.5,-2) .. (-2d) -- (-1d) to [bend right=15] (0d);
\filldraw [filling, d] (2d) arc (90:270:0.5) -- cycle;
\filldraw [filling, d] (-3d) .. controls (0.5,-3) and (3.5,-4) .. (4d) -- (4,-5) -- (5d) to [bend right=15] (-4d) -- cycle;
\draw [suture, d] (0d) to [bend left=15] (-1d);
\draw [suture, d] (2d) arc (90:270:0.5);
\draw [suture, d] (1d) .. controls (3.5,-1) and (0.5,-2) .. (-2d);
\draw [suture, d] (-3d) .. controls (0.5,-3) and (3.5,-4) .. (4d);
\draw [suture, d] (-4d) to [bend left=15] (5d);
\draw [boundary, d] (0,0) -- (4,0) -- (4,-5) -- (0,-5) -- cycle;
\foreach \point in {0d, 1d, 2d, 3d, 4d, 5d, -1d,-2d,-3d,-4d}
\fill [vertex] (\point) circle (2pt);
\draw (17,-8.5) node {$+$};
\coordinate (0e) at (20,-6);
\coordinate (1e) at (22,-7);
\coordinate (-1e) at (18,-7);
\coordinate (2e) at (22,-8);
\coordinate (-2e) at (18,-8);
\coordinate (3e) at (22,-9);
\coordinate (-3e) at (18,-9);
\coordinate (4e) at (22,-10);
\coordinate (-4e) at (18,-10);
\coordinate (5e) at (20,-11);
\filldraw [filling, e] (0e) -- (4,0) -- (1e) to [bend left=15] (0e);
\filldraw [filling, e] (-1e) .. controls (0.5,-1) and (3.5,-2) .. (2e) -- (3e) .. controls (3.5,-3) and (0.5,-4) .. (-4e) -- (-3e) arc (-90:90:0.5) -- (-1e);
\filldraw [filling, e] (4e) to [bend right=15] (5e) -- (4,-5) -- cycle;
\draw [suture, e] (0e) to [bend right=15] (1e);
\draw [suture, e] (-1e) .. controls (0.5,-1) and (3.5,-2) .. (2e);
\draw [suture, e] (-2e) arc (90:-90:0.5);
\draw [suture, e] (3e) .. controls (3.5,-3) and (0.5,-4) .. (-4e);
\draw [suture, e] (4e) to [bend right=15] (5e);
\draw [boundary, e] (0,0) -- (4,0) -- (4,-5) -- (0,-5) -- cycle;
\foreach \point in {0e, 1e, 2e, 3e, 4e, 5e, -1e,-2e,-3e,-4e}
\fill [vertex] (\point) circle (2pt);
\draw (23,-8.5) node {$+$};
\coordinate (0f) at (26,-6);
\coordinate (1f) at (28,-7);
\coordinate (-1f) at (24,-7);
\coordinate (2f) at (28,-8);
\coordinate (-2f) at (24,-8);
\coordinate (3f) at (28,-9);
\coordinate (-3f) at (24,-9);
\coordinate (4f) at (28,-10);
\coordinate (-4f) at (24,-10);
\coordinate (5f) at (26,-11);
\filldraw [filling, f] (0f) -- (4,0) -- (1f) to [bend left=15] (0f);
\filldraw [filling, f] (-1f) .. controls (0.5,-1) and (3.5,-2) .. (2f) -- (3f) .. controls (3.5,-3) and (0.5,-2) .. (-2f) -- cycle;
\filldraw [filling, f] (-3f) .. controls (0.5,-3) and (3.5,-4) .. (4f) -- (4,-5) -- (5f) to [bend right=15] (-4f) -- cycle;
\draw [suture, f] (0f) to [bend right=15] (1f);
\draw [suture, f] (-1f) .. controls (0.5,-1) and (3.5,-2) .. (2f);
\draw [suture, f] (-2f) .. controls (0.5,-2) and (3.5,-3) .. (3f);
\draw [suture, f] (-3f) .. controls (0.5,-3) and (3.5,-4) .. (4f);
\draw [suture, f] (-4f) to [bend left=15] (5f);
\draw [boundary, f] (0,0) -- (4,0) -- (4,-5) -- (0,-5) -- cycle;
\foreach \point in {0f, 1f, 2f, 3f, 4f, 5f, -1f, -2f, -3f, -4f}
\fill [vertex] (\point) circle (2pt);
\draw (5, -13) node {$=$};
\draw (8,-13) node {$\Gamma_{ppqq}$};
\draw (11,-13) node {$+$};
\draw (14,-13) node {$\Gamma_{pqqp}$};
\draw (17,-13) node {$+$};
\draw (20,-13) node {$\Gamma_{qppq}$};
\draw (23,-13) node {$+$};
\draw (26,-13) node {$\Gamma_{qpqp}$};
\draw (5,-15) node {$\cong$};
\draw (8,-15) node {$ppqq$};
\draw (11,-15) node {$+$};
\draw (14,-15) node {$pqqp$};
\draw (17,-15) node {$+$};
\draw (20,-15) node {$qppq$};
\draw (23,-15) node {$+$};
\draw (26,-15) node {$qpqp$};
\end{tikzpicture}
\]

We should point out that this is not the only interesting basis of $V_n$. In \cite{Me12_itsy_bitsy} we defined a large class of bases, one from any \emph{quadrangulation} of a surface.

\section{Contact topology}

As it turns out, all these curves on surfaces and stuffing into cylinders and $120^\circ$ rotations describe 3-dimensional contact topology.

\subsection{Chord diagrams and contact structures}

First of all, \emph{a chord diagram $\Gamma$ on a disc $D$ describes a contact structure $\xi_\Gamma$ on $D \times I$}. This contact structure $\xi_\Gamma$ consists of planes which are, roughly (and inaccurately) speaking,
\begin{itemize}
\item
Tangent to $D$ around the boundary of $D$
\item
``Perpendicular'' to $D$ precisely along $\Gamma$.\footnote{This statement makes no sense: a contact manifold has no metric, and no notion of perpendicularity! Rather, we should say that the vector field in the $I$ direction lies in $\xi$.}
\end{itemize}

In the simple example below of a disc with two chords, as we proceed from one side to the other, crossing both chords, the contact planes rotate through a full $360^\circ$. A rough (and inaccurate) interpretation of the contact condition of non-integrability is that if we follow a curve $C$ tangent to the contact structure $\xi$, dotted in the diagram, then the contact planes always rotate in the same direction about $C$. 

\begin{center}
\includegraphics[scale=1]{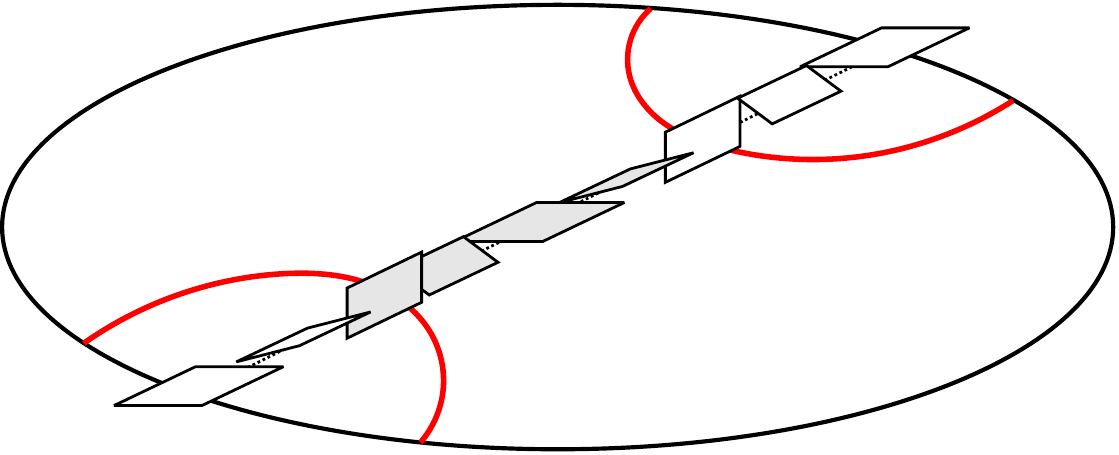}
\end{center}

If we colour one side of the contact planes white and the other side black (grey) then the black and white regions in a chord diagram correspond to which side of the contact planes are visible from above.
\begin{center}
\includegraphics[scale=1]{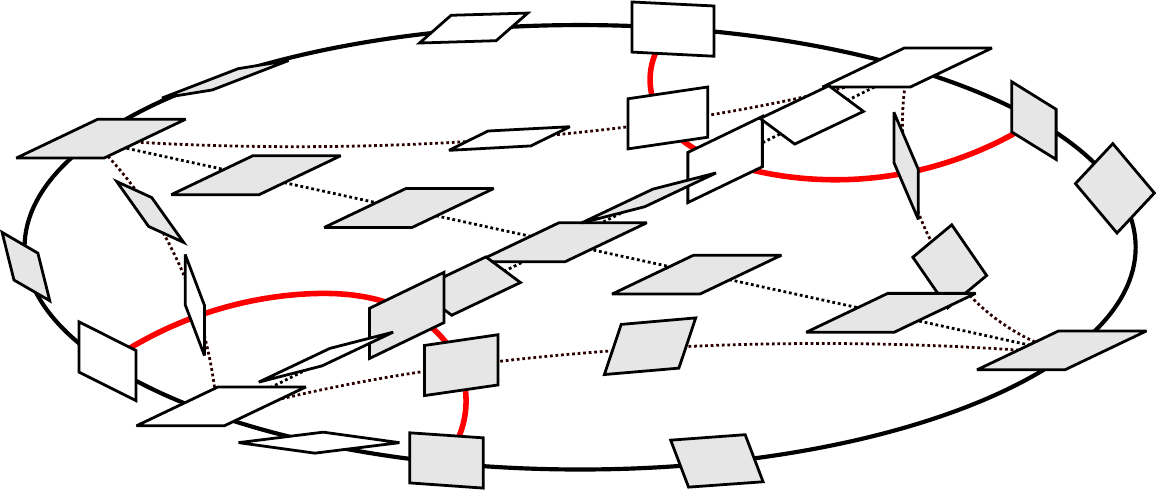}
\end{center}

It turns out that, in a sense which can be made precise, there is only one ``way up to isotopy'' to draw contact planes consistent with the chords. The lines of intersection of the contact planes with the disc $D$ trace out a foliation called the \emph{characteristic foliation}, some curves of which are dotted in the diagrams. The characteristic foliation determines the germ of a contact structure near $D$. The characteristic foliation is transverse to the chords $\Gamma$ (known as a \emph{dividing set} in contact geometry) and in fact can be directed by a vector field to exponentially dilate an area form on each component of $D \backslash \Gamma$ exiting through $\Gamma$. All such foliations compatible with $\Gamma$ give isotopic germs of contact structures. This is all part of the theory of \emph{convex surfaces} developed by Giroux \cite{Gi91} in 1991.

\subsection{Contact structures and overtwisted discs}

Given a 3-manifold $M$, it's an interesting and difficult question to find all the isotopy classes of contact structures on $M$.

Eliashberg in \cite{ElOT} showed that there are fundamentally two types of contact structures, called \emph{overtwisted} and \emph{tight}. An overtwisted contact structure is one that contains an object called an \emph{overtwisted disc}; a tight contact structure is one that does not.

An \emph{overtwisted disc} is a neighbourhood of a disc with a contact structure as shown. It corresponds to seeing a \emph{contractible closed curve} in a chord diagram / dividing set on a disc.

\begin{center}
\includegraphics[scale=1]{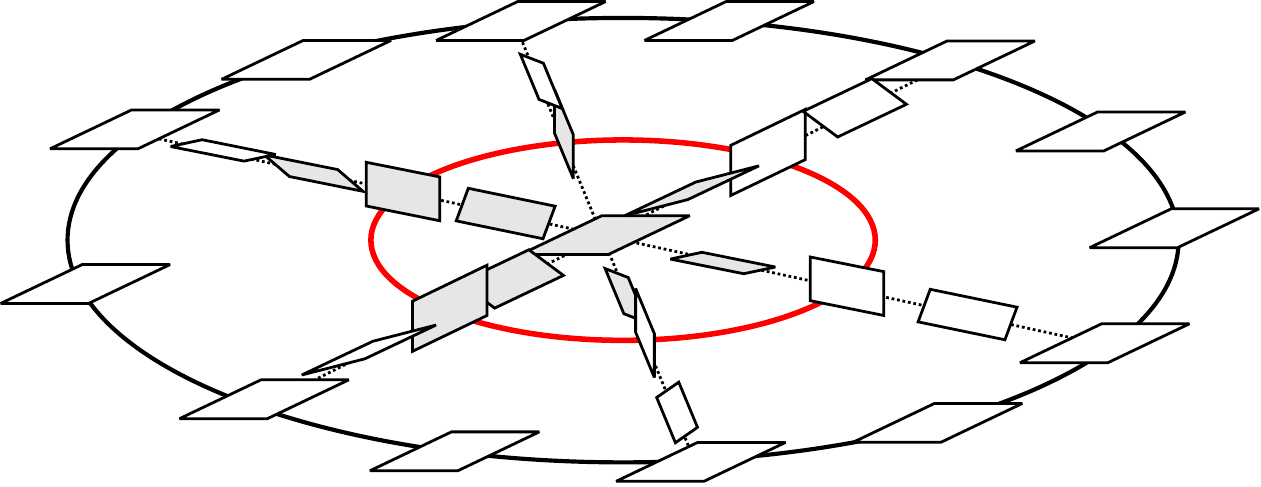}
\end{center}

Eliashberg in 1989 \cite{ElOT} proved that the classification of overtwisted contact structures on a 3-manifold $M$ is equivalent to the homotopy classification of plane fields on $M$. In this case the contact geometry reduces to homotopy theory (which is well understood) and contact geometry offers nothing new. So contact topologists tend to regard an overtwisted disc as ``spoiling'' a contact structure or rendering it trivial. It is quite surprising that the presence of one particular disc in a contact 3-manifold has such global topological consequences.

The \emph{tight} contact structures on $M$, on the other hand, offer important information about the topology of $M$. Not every 3-manifold has a tight contact structure, as Etnyre and Honda proved in 1999 \cite{EH99}; and the number of tight contact structures on a $3$-manifold depends intricately on the topology of $M$ (see, e.g., \cite{Gi00, GiBundles, Hon00I, Hon00II}).

This makes it quite reasonable that we regard a diagram on a disc, with a red closed curve, as ``trivial'', and counting it as $0$, such as in this figure seen previously:
\[
\begin{tikzpicture}[
scale=0.9, 
suture/.style={thick, draw=red},
boundary/.style={ultra thick},
vertex/.style={draw=red, fill=red}]

\coordinate [label = above:{$0$}] (0b) at ($ (90:2) + (8,0) $);
\coordinate [label = above right:{$1$}] (1b) at ($ (54:2) + (8,0) $);
\coordinate [label = above right:{$2$}] (2b) at ($ (18:2) + (8,0) $);
\coordinate [label = right:{$3$}] (3b) at ($ (-18:2) + (8,0) $);
\coordinate [label = below right:{$4$}] (4b) at ($ (-54:2) + (8,0) $);
\coordinate [label = below right:{$5$}] (5b) at ($ (-90:2) + (8,0) $);
\coordinate [label = below:{$-4$}] (6b) at ($ (-126:2) + (8,0) $);
\coordinate [label = below left:{$-3$}] (7b) at ($ (-162:2) + (8,0) $);
\coordinate [label = below left:{$-2$}] (8b) at ($ (162:2) + (8,0) $);
\coordinate [label = left:{$-1$}] (9b) at ($ (126:2) + (8,0) $);

\filldraw[fill=black!10!white, draw=none] (2b) arc (18:-18:2) -- (8b) arc (162:126:2) to [bend right=90] (0b) arc (90:54:2) to [bend right=90] (2b);
\filldraw[fill=black!10!white, draw=none] (4b) to [bend right=90] (5b) arc (-90:-54:2);
\filldraw[fill=black!10!white, draw=none] (6b) to [bend right=90] (7b) arc (-162:-126:2);
\filldraw[fill=black!10!white, draw=red] ($ (36:1.8) + (8,0) $) circle (0.1);
\draw[suture] (0b) to [bend left=90] (9b);
\draw[suture] (1b) to [bend right=90] (2b);
\draw[suture] (3b) -- (8b);
\draw[suture] (4b) to [bend right=90] (5b);
\draw[suture] (6b) to [bend right=90] (7b);

\draw [boundary] (8,0) circle (2 cm);

\foreach \point in {0b, 1b, 2b, 3b, 4b, 5b, 6b, 7b, 8b, 9b}
\fill [vertex] (\point) circle (2pt);

\end{tikzpicture}
\]

\subsection{Contact structures on spheres and balls}

Contact structures in the neighbourhood of a sphere $S^2$ are again given by dividing sets on $S^2$, which we continue to draw in red. It turns out that there is essentially only one tight contact structure in the neighbourhood of an $S^2$, and that is given by a \emph{single} connected curve. Any dividing set with more than one curve gives an overtwisted contact structure.

\begin{center}
\includegraphics[scale=0.3]{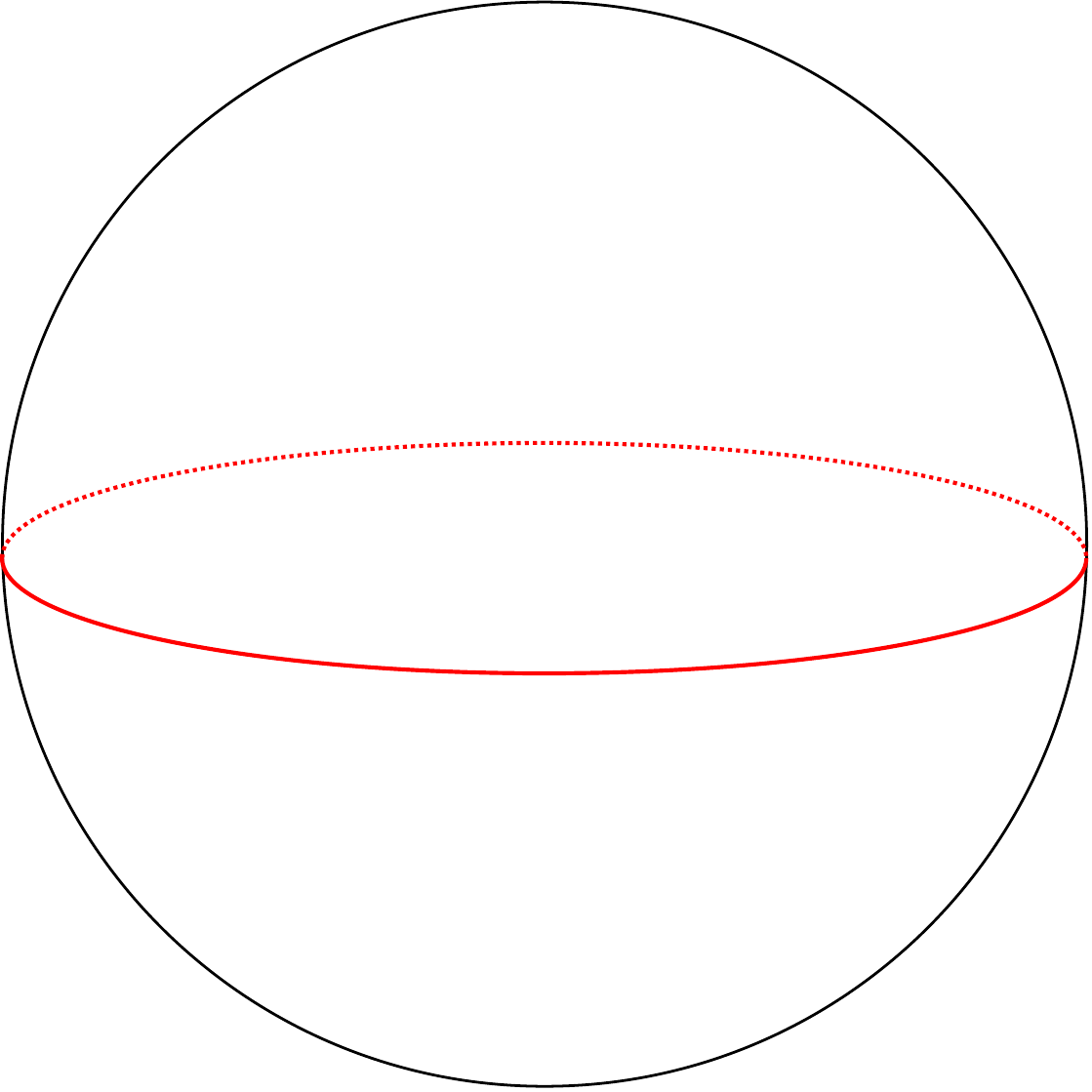}
\quad
\includegraphics[scale=0.3]{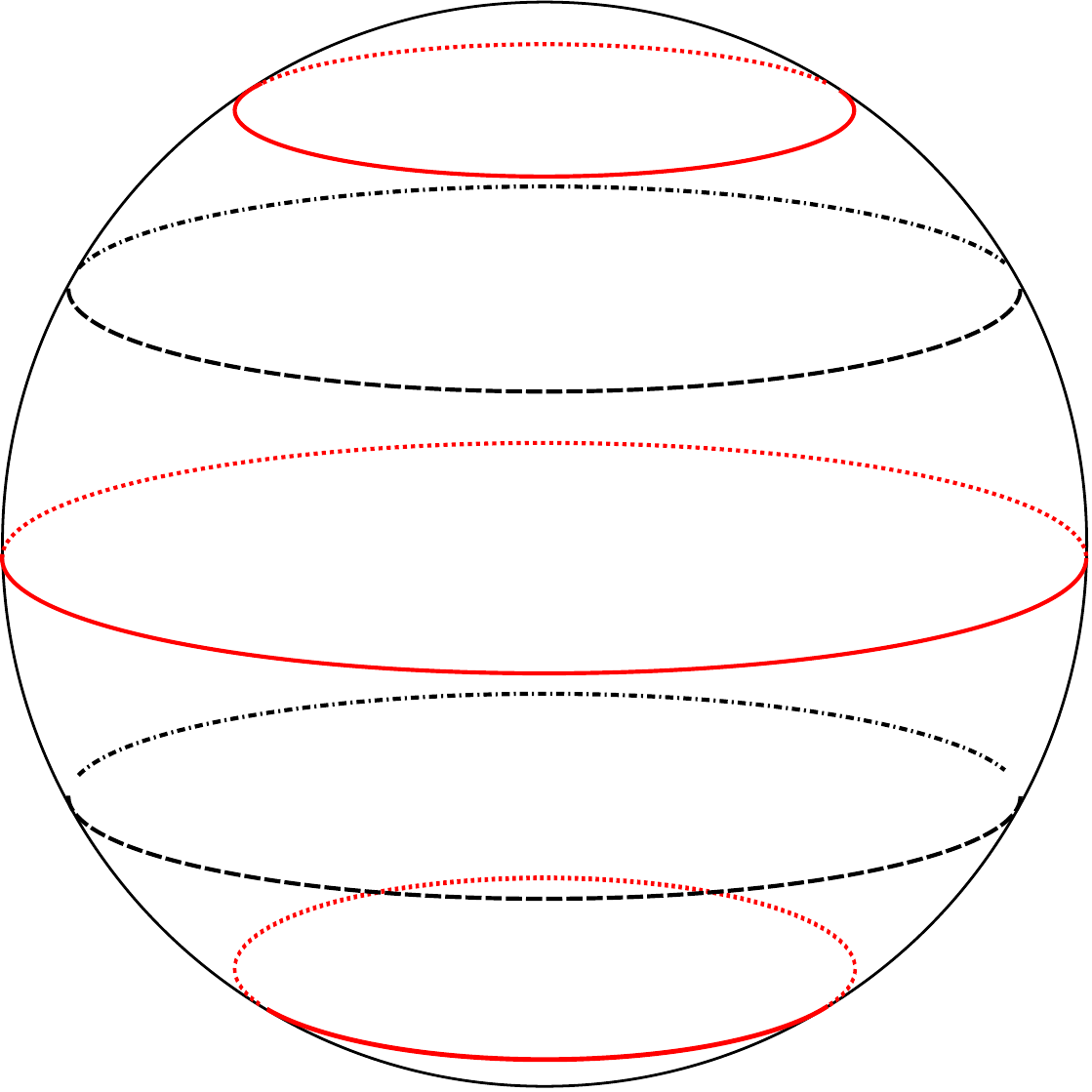}
\end{center}

The sphere on the left has a tight contact neighbourhood. The sphere on the right has an overtwisted contact neighbourhood, and  boundaries of two overtwisted discs are drawn in dashed black.

Now consider each sphere $S^2$ as the boundary of a ball $B^3$. Contact structures on balls were classified by Eliashberg in 1992 \cite{ElMartinet}. Putting aside the overtwisted structures, there is essentially only one \emph{tight} contact structure on the ball, up to isotopy. Given a dividing set on the boundary sphere which is connected, there is an essentially unique way to fill it in.

\subsection{Contact corners}

When considering a surface $S$ with boundary $\partial S$ in a contact 3-manifold $(M, \xi)$, we often require the contact planes to be tangent to the boundary $\partial S$. Above, all our diagrams of discs with contact planes have been drawn in this way.

When two surfaces meet along their boundary, forming a \emph{corner}, then the dividing sets do not intersect along the corner --- with our rough interpretation of dividing set as ``where contact planes are perpendicular to the surface'', this would mean that the contact planes are perpendicular to both surfaces meeting at the corner, as well as tangent to the corner, which is impossible.

Rather, the contact planes rotate around the corner curve, and the the dividing sets \emph{interleave} as shown.

\begin{center}
\includegraphics[scale=0.6]{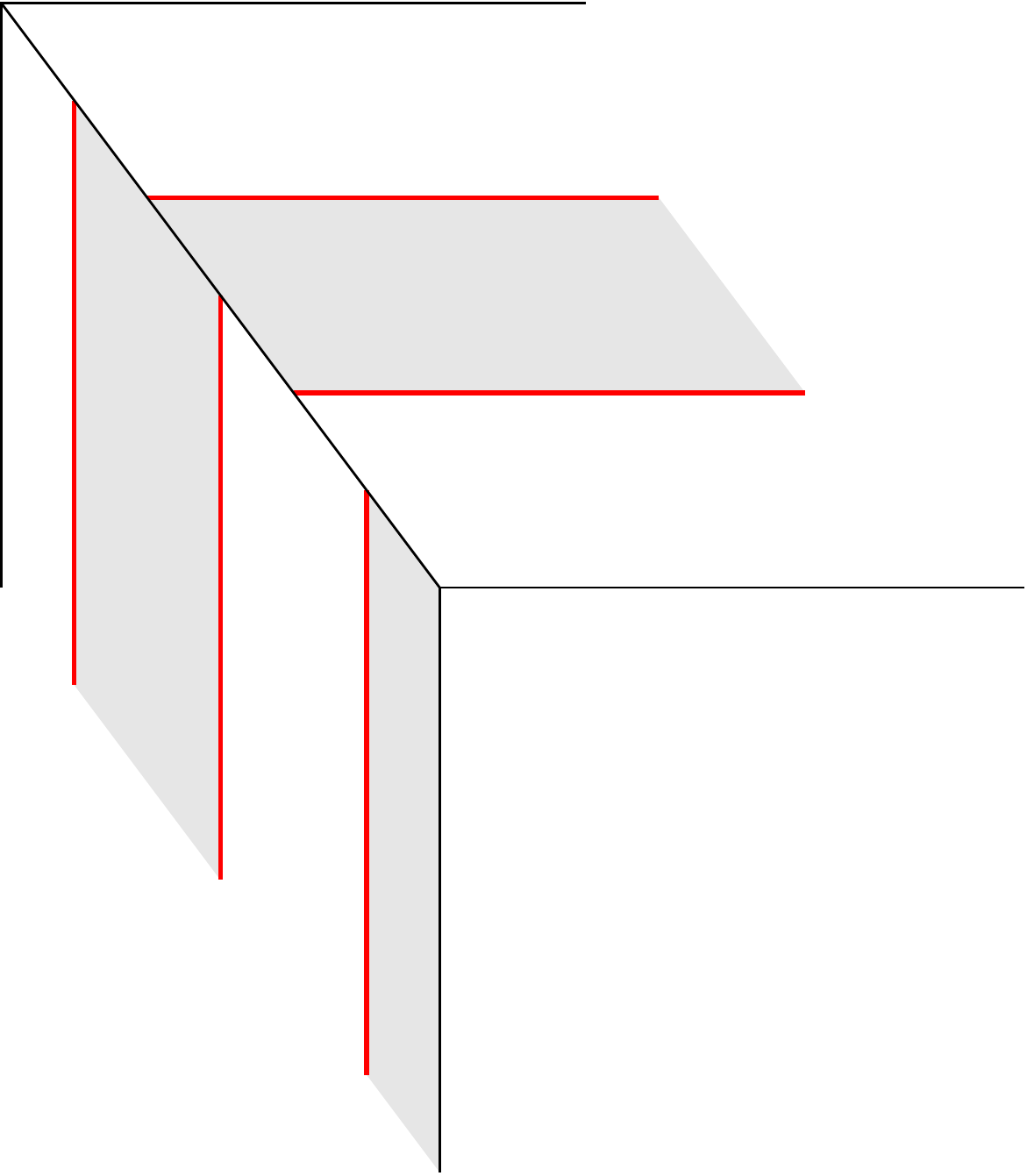}
\quad
\includegraphics[scale=0.6]{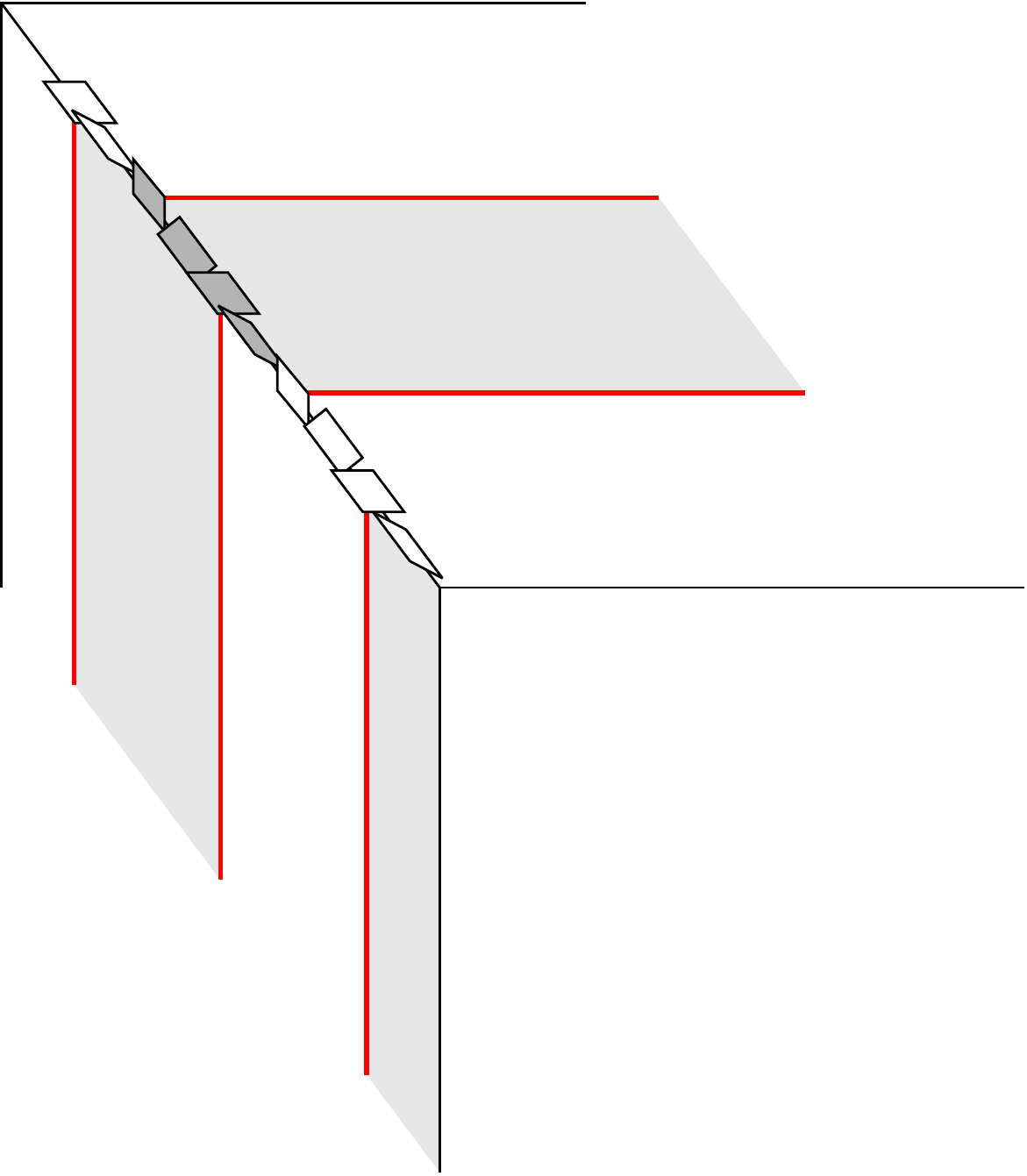}
\end{center}

It's not too difficult to believe, then, that if we \emph{round} the corners, then dividing curves behave as we proposed earlier.

\begin{center}
\includegraphics[scale=0.6]{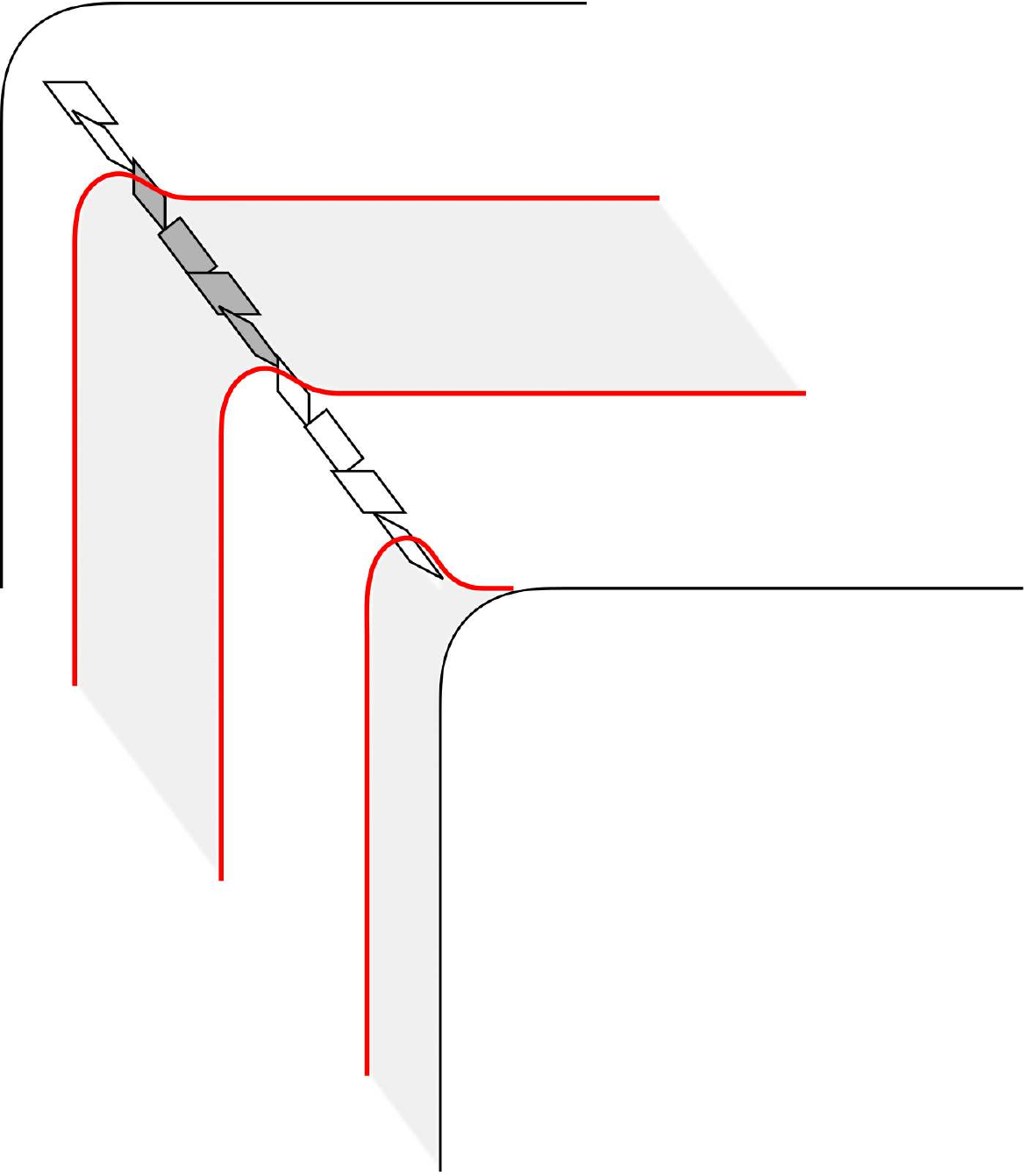}
\end{center}

It follows then that the ``inner product'' on chord diagrams, involving insertion into a cylinder, has a simple contact geometry interpretation.

\begin{prop}
Let $\Gamma_0, \Gamma_1$ be chord diagrams. The following are equivalent:
\begin{enumerate}
\item
$\langle \Gamma_0 | \Gamma_1 \rangle = 1$.
\item
The solid cylinder with dividing set $\Gamma_0$ on the bottom and $\Gamma_1$ on the top has a tight contact structure.
\end{enumerate}
\qed
\end{prop}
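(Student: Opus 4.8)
The plan is to identify the solid cylinder $D \times I$ with a $3$-ball $B^3$ and reduce both statements to the classification results about tight contact structures near a sphere and on a ball that are recalled in this section.

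First I would pin down the meaning of statement (2): a contact structure on $D\times I$ for which the bottom disc $D\times\{0\}$ is convex with dividing set $\Gamma_0$, the top disc $D\times\{1\}$ is convex with dividing set $\Gamma_1$, and the annular side $\partial D \times I$ is convex with vertical dividing curves as in Section~\ref{sec:curves_on_cylinders}. Then I would invoke Giroux's edge-rounding: smoothing the two corner circles where the discs meet the annulus turns $\partial(D\times I)$ into a convex sphere $S^2$, and the dividing set it carries is obtained from $\Gamma_0$, $\Gamma_1$ and the vertical curves precisely by the ``stack and round the corners'' operation described earlier. By the very definition of the bilinear form, this dividing set on $S^2$ is a single connected curve if and only if $\langle \Gamma_0 | \Gamma_1 \rangle = 1$. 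So the proposition reduces to: the ball $B^3 = D\times I$ admits a tight contact structure with the prescribed convex boundary data if and only if the induced dividing set on $S^2 = \partial B^3$ is connected.

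For $(1) \Rightarrow (2)$: assuming the dividing set on $S^2$ is connected, Eliashberg's theorem that a connected dividing set on $\partial B^3$ has an essentially unique tight filling produces a tight contact structure on $B^3$ realising that boundary behaviour; transporting it across the corner-rounding yields a tight contact structure on $D\times I$ with $\Gamma_0$ on the bottom and $\Gamma_1$ on the top. For $(2) \Rightarrow (1)$: if $D\times I$ carries such a tight contact structure, then its restriction to a collar neighbourhood of $S^2$ is again tight, since an overtwisted disc inside a submanifold is one in $D \times I$. But a tight contact neighbourhood of $S^2$ must have connected dividing set --- a disconnected one bounds an overtwisted disc, as recalled above --- so the dividing set on $S^2$ is connected and hence $\langle \Gamma_0 | \Gamma_1 \rangle = 1$.

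The main obstacle is two external inputs rather than any computation: (i) Giroux's edge-rounding lemma, which is what certifies that the purely combinatorial ``round the corners'' recipe actually computes the dividing set of the smoothed convex sphere --- here one must check that the interleaving of dividing curves along the corners matches the recipe; and (ii) Eliashberg's classification of tight contact structures on the ball, whose existence half drives $(1)\Rightarrow(2)$ and whose uniqueness half ensures statement (2) is insensitive to the auxiliary choices made in setting up the convex boundary. Granting these, the argument is a dictionary translation and requires no delicate estimates.
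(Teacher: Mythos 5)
Your argument is correct and is essentially the one the paper intends: the proposition is stated with an immediate \qed precisely because it follows from the preceding discussion of corner rounding (Giroux convexity), the definition of $\langle \cdot | \cdot \rangle$ via rounding on the cylinder, and Eliashberg's classification of contact structures on the ball --- tight iff the boundary dividing set is a single connected curve. Your write-up simply makes explicit the same dictionary translation (edge-rounding identifies the combinatorial rounding recipe with the convex boundary sphere, then apply the ball theorem in both directions), so there is no substantive difference from the paper's route.
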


The ``finger moves'' we saw earlier for curves on the cylinder now correspond simply to isotopy of contact structures near the sphere, and in the ball.

\subsection{Bypasses}

Suppose we start from a surface $S$ with a dividing set on it --- such as a disc with a chord diagram. This describes a contact structure near $S$; effectively, on $S \times I$. We could then try to build up a contact manifold by gluing on more pieces on top of $S \times I$, and gluing them up.

It turns out that any contact 3-manifold can be built up in this way, using only fundamental building blocks called \emph{bypasses}. Drawn in terms of chord diagrams and cylinders, a bypass is actually something we saw earlier in computing $\langle \Gamma_{pq} \; | \; \Gamma_{qp} \rangle$. It is drawn below.

\begin{center}
\includegraphics[scale=0.5]{stack_2-eps-converted-to.pdf}
\end{center}

A bypass is nothing but a particular contact 3-ball. Ware consider the cylinder as bounding a 3-ball; from Eliashberg's theorem we know that the contact structure on the boundary of the cylinder extends uniquely throughout the ball to a tight contact structure.

Thus, the elementary building blocks of contact topology are, in a certain sense, precisely the elementary pawn moves
\[
\begin{array}{c}
pq \\
\begin{tabularx}{0.15\textwidth}{|X|X|}
\hline
\WhitePawnOnWhite & \BlackPawnOnWhite \\
\hline
\end{tabularx}
\end{array}
\quad \rightarrow \quad
\begin{array}{c}
qp \\
\begin{tabularx}{0.15\textwidth}{|X|X|}
\hline
\BlackPawnOnWhite & \WhitePawnOnWhite \\
\hline
\end{tabularx}
\end{array}
\]

However, beware! If you stack two bypasses directly on top of each other, you will find an overtwisted disc. We might say that the shortest step in contact topology is half way to oblivion.

\begin{center}
\includegraphics[scale=0.4]{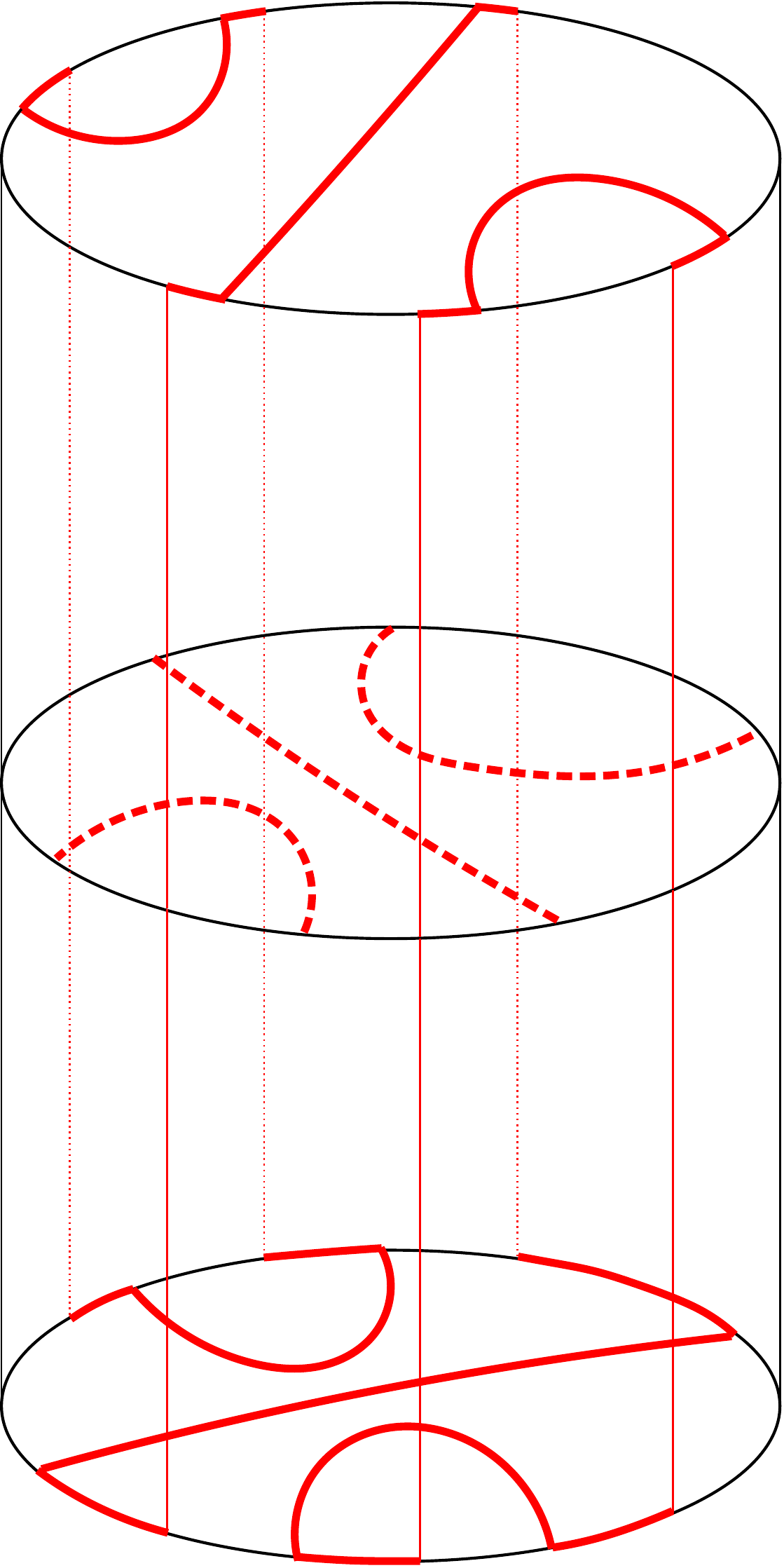}
\end{center}

To build a tight contact structure, you'll need to place bypasses in more sophisticated locations than just on top of each other.

We see that addition of a bypass on top of a disc performs a $120^\circ$ rotation in the chord diagram. This is precisely the operation described earlier and we defined the vector space $V_n$ by setting triples of bypass-related chord diagrams to sum to zero.

In fact, our creation and annihilation operators, which added or closed off curves in a chord diagram, can also be seen as building onto an existing contact structure.

\subsection{The contact category}

Honda has introduced the concept of the \emph{contact category} \cite{HonCat}. 
For a disc, the contact category $\mathcal{C}_n$ consists of objects and morphisms as follows.
\begin{enumerate}
\item
The \emph{objects} are chord diagrams with $n$ chords, i.e. contact structures near discs $D$.
\item
The \emph{morphisms} $\Gamma_0 \To \Gamma_1$ are contact structures on the cylinder $D \times I$, with $\Gamma_0$ on the bottom and $\Gamma_1$ on the top.
\item
\emph{Composition of morphisms} is given by stacking cylinders on top of each other.
\end{enumerate}

Honda showed that this category has many of the properties of a \emph{triangulated category}. In particular bypass triples behave very much like exact triangles. If $\Gamma, \Gamma', \Gamma''$ are successively obtained from each other by adding bypasses, then the composition of any two is overtwisted, hence $0$.
\[
\begin{tikzpicture}[
scale=0.9, 
suture/.style={thick, draw=red},
boundary/.style={ultra thick},
vertex/.style={draw=red, fill=red}]
\coordinate (0a) at ($ (90:1) + (-150:3) $);
\coordinate (1a) at ($ (30:1) + (-150:3) $);
\coordinate (2a) at ($ (-30:1) + (-150:3) $);
\coordinate (3a) at ($ (-90:1) + (-150:3) $);
\coordinate (4a) at ($ (-150:1) + (-150:3) $);
\coordinate (5a) at ($ (150:1) + (-150:3) $);
\filldraw[fill=black!10!white, draw=none] (4a) arc (210:150:1) to [bend right=90] (0a) arc (90:30:1) -- cycle;
\filldraw[fill=black!10!white, draw=none] (2a) to [bend right=90] (3a) arc (-90:-30:1);
\draw [boundary] (-150:3) circle (1 cm);
\draw [suture] (5a) to [bend right=90] (0a);
\draw [suture] (4a) -- (1a);
\draw [suture] (2a) to [bend right=90] (3a);
\draw (-150:1.5) node {$\Gamma'$};
\coordinate (0) at ($ (90:1) + (90:3) $);
\coordinate (1) at ($ (30:1) + (90:3) $);
\coordinate (2) at ($ (-30:1)  + (90:3) $);
\coordinate (3) at ($ (-90:1) + (90:3) $);
\coordinate (4) at ($ (-150:1) + (90:3) $);
\coordinate (5) at ($ (150:1) + (90:3) $);
\filldraw[fill=black!10!white, draw=none] (0) arc (90:30:1) to [bend right=90] (2) arc (-30:-90:1) -- cycle;
\filldraw[fill=black!10!white, draw=none] (4) to [bend right=90] (5) arc (150:210:1);
\draw [boundary] (90:3) circle (1 cm);
\draw [suture] (1) to [bend right=90] (2);
\draw [suture] (0) -- (3);
\draw [suture] (4) to [bend right=90] (5);
\draw (90:1.5) node {$\Gamma$};
\coordinate (0b) at ($ (90:1) + (-30:3) $);
\coordinate (1b) at ($ (30:1) + (-30:3) $);
\coordinate (2b) at ($ (-30:1) + (-30:3) $);
\coordinate (3b) at ($ (-90:1) + (-30:3) $);
\coordinate (4b) at ($ (-150:1) + (-30:3) $);
\coordinate (5b) at ($ (150:1) + (-30:3) $);
\filldraw[fill=black!10!white, draw=none] (2b) arc (-30:-90:1) to [bend right=90] (4b) arc (210:150:1) -- cycle;
\filldraw[fill=black!10!white, draw=none] (0b) to [bend right=90] (1b) arc (30:90:1);
\draw [boundary] (-30:3) circle (1 cm);
\draw [suture] (3b) to [bend right=90] (4b);
\draw [suture] (2b) -- (5b);
\draw [suture] (0b) to [bend right=90] (1b);
\draw (-30:1.5) node {$\Gamma''$};
\draw [->] (115:1.5) -- (-165:1.5);
\draw [->] (-135:1.5) -- (-45:1.5);
\draw [->] (-15:1.5) -- (75:1.5);
\end{tikzpicture}
\]

But we have seen here that chord diagrams themselves can be described by pawns and chessboards --- and the dynamics of pawns on chessboards is itself essentially a \emph{partial order}, which is a type of category. Our work in \cite{Me09Paper} defined a \emph{contact 2-category} out of this geometric structure.

In fact, looking now at the vector space $V_n$ we defined, we have essentially
\[
V_n = \frac{\Z_2 \langle \text{Chord diagrams with $n$ chords} \rangle }{ \text{Bypass relation} } = \frac{\Z_2 \langle \text{Objects of $\mathcal{C}_n$} \rangle }{ \text{Exact triangles sum to zero} }
\]
and thus $V_n$ is the \emph{Grothendieck group} of $\mathcal{C}_n$.

\subsection{Combinatorics of contact geometry}

To summarise, we've now seen various ways in which the combinatorics of chords on discs and cylinders, quantum pawn dynamics, and contact geometry are connected:
\begin{enumerate}
\item
Any chord diagram describes a contact structure near a disc.
\item
The ``inner product'' $\langle \cdot | \cdot \rangle$ can be interpreted as pawn dynamics on a chessboard, insertion of chord diagrams into a cylinder, or the existence of tight contact structures in solid cylinders.
\item
Creation and annihilation operators can be defined as pawn/antipawn creation/annihilation, or the insertion/closure of curves in a chord diagram, or building a contact structure.
\item
The adjoint property of creation and annihilation can be interepreted via the combinatorics of chessboards, finger moves on cylinders, or isotopy of contact structures on the sphere.
\item
A diagram on a disc with a closed curve describes an overtwisted contact structure, hence is ``trivial'' in terms of contact geometry. Similarly for curves on a cylinder with more than one component.
\item
Bypass triples of chord diagrams correspond to: building blocks of contact 3-manifolds; triples which must sum to zero for the ``insertion into cylinder inner product'' $\langle \cdot | \cdot \rangle$ to be nondegenerate; and exact triangles in the contact category.
\item
Chessboards correspond to a special class of chord diagrams, which look like slalom ski slopes, forming a basis for $V_n$, the Grothendieck group of the contact category of the disc.
\end{enumerate}

Much of this structure is similar to \emph{topological quantum field theories} (TQFT's). In particular, we have assigned vector spaces $V_n$ to discs, which are 2-dimensional. To cylinders, which we can think of as $(2+1)$-dimensional, or as the ``time evolution'' of a disc, we have associated an element of $\Z_2$, or a morphism in the contact category. And we have the ``inner product'' which describes not so much a probability amplitude, but a \emph{possibility amplitude} for a tight contact structure to evolve from one disc to another. We can call this structure \emph{contact TQFT} and summarise much of what we have said as
\[
\text{Contact TQFT} \quad \cong \quad \text{QPD}.
\]

\section{Holomorphic invariants}

In fact, all of this structure actually arose from some of the holomorphic invariants mentioned earlier.

\subsection{Sutured Floer homology}

Sutured Floer homology is a variant of Heegaard Floer homology. A sutured 3-manifold $(M, \Gamma)$ is (roughly) a 3-manifold $M$ with boundary, and some curves $\Gamma$ on the boundary $\partial M$, which divide $\partial M$ into positive and negative regions. Sutured 3-manifolds were studied by Gabai in the 1980s in the context of foliation theory \cite{Gabai83}, but also describe contact 3-manifolds with boundary --- indeed, the diagrams we have drawn with discs, red curves, and complementary regions coloured black and white are sutures. There are close relationships between foliation theory, sutured manifolds and contact topology \cite{HKM02, Eliashberg_Thurston}.

Sutured Floer homology, again speaking roughly and imprecisely, is defined as follows. A sutured 3-manifold\footnote{Strictly speaking, a \emph{balanced} sutured 3-manifold.} $(M, \Gamma)$ has a Heegaard decomposition, consisting of a Heegaard surface with boundary $\Sigma$, and two sets of curves $\alpha_1, \ldots, \alpha_k$ and $\beta_1, \ldots, \beta_k$ on $\Sigma$. The manifold can be recovered from the decomposition by gluing to $\Sigma \times [0,1]$ discs along $\alpha_i \times \{0\}$ and $\beta_i \times \{1\}$. The sutures lie along $\partial \Sigma \times [0,1]$, the negative part of the boundary is $\Sigma \times \{0\}$ surgered along the $\alpha_i \times \{0\}$, and the positive part of the boundary is $\Sigma \times \{1\}$ surgered along the $\beta_i \times \{1\}$. We then consider holomorphic curves 
\[
u \; : \; S \To \Sigma \times I \times \R
\]
where $S$ is a Riemann surface with boundary and with negative boundary punctures $p_1, \ldots, p_k$ and positive boundary punctures $q_1, \ldots, q_k$, satisfying conditions including the following (among others)\footnote{There are also some more technical conditions including conditions on the complex structure, non-constant preojections of $u$, and finite energy. We just highlight the fact that the boundary conditions come from 3-manifold topology. See \cite{Lip} and \cite{Ju06}.}:
\begin{enumerate}
\item
$u$ sends $\partial S$ to 
\[
\left( \left( \cup_i \alpha_i \right) \times \{1\} \times \R \right) \cup \left( \left( \cup_i \beta_i \right) \times \{0\} \times \R \right).
\]
\item
$u$ asymptotically sends each $p_i$ to $-\infty$ in the $\R$ coordinate, and each $q_i$ to $+\infty$.
\item
Each $u^{-1}(\alpha_i \times \{1\} \times \R)$ and $u^{-1} (\beta_i \times \{0\} \times \R)$ consists of precisely one segment of $\partial S \backslash \{p_1, \ldots, p_k, q_1, \ldots, q_k\}$.
\end{enumerate}

The point I want to make here is that, whatever the precise boundary conditions for holomorphic curves, they are defined in terms of the Heegaard decomposition. At $+\infty$, the curve $u(S)$ (or rather, its projection to $\Sigma$) runs through several intersections of $\alpha$ and $\beta$ curves. Indeed, at $+\infty$ we obtain a set of points $z_1, \ldots, z_k \in \Sigma$, such that
\[
z_1 \in \alpha_1 \cap \beta_{\sigma(1)}, \quad
z_2 \in \alpha_2 \cap \beta_{\sigma(2)}, \quad
\ldots, \quad
z_k \in \alpha_k \cap \beta_{\sigma(k)}
\]
for some permutation $\sigma$. We obtain a similar set of intersection points at $-\infty$.


We define a chain complex $\widehat{CF}$ generated over $\Z_2$ by sets of such intersection points,
\[
{\bf x} = \{z_1, \ldots, z_k\}.
\]
If we fix ${\bf x}$ and ${\bf y}$ to be two sets of intersection points, then we consider the moduli space $\mathcal{M}({\bf x}, {\bf y})$ with boundary conditions given by ${\bf x}$ at $+\infty$ and ${\bf y}$ at $-\infty$. The dimension of this moduli space can be given in terms of the topology of ${\bf x}$ and ${\bf y}$ (an \emph{index formula}). It follows, with some serious analysis, that we can define a differential to count index-1 curves
\[
\partial {\bf x} = \sum_{\dim \mathcal{M}({\bf x}, {\bf y}) = 1} \# \widehat{\mathcal{M}}({\bf x}, {\bf y}) \cdot {\bf y}
\]
and $\partial^2 = 0$. (Here $\widehat{\mathcal{M}}$ is the quotient of the moduli space $\mathcal{M}$ by the action of translation along $\R$; so $\widehat{\mathcal{M}}$ consists of finitely many points.) Then we may take the \emph{homology} and Ozsv\'{a}th--Szab\'{o} (for closed manifolds \cite{OS04Closed, OS04Prop}) and Juhasz (for sutured manifolds \cite{Ju06}) showed that this homology is \emph{independent of the choice of Heegaard decomposition} (and independent of other technical choices made along the way). That is, the resulting homology is a sutured manifold invariant which we may call $SFH(M, \Gamma)$.

Sutured Floer homology also has the nice property that any contact structure $\xi$ on $(M, \Gamma)$ gives a \emph{contact element} $c(\xi)$ in $SFH(M, \Gamma)$ \cite{HKM09}.\footnote{This element is only defined up to sign; however if we take $\Z_2$ coefficients, no ambiguity arises.}

It turns out that our subject matter in this note has been sutured Floer homology. In particular, $SFH$ of solid tori $D^2 \times S^1$. If we let $F_n$ consist of $2n$ points on the boundary $\partial D^2$, so that $F_n \times S^1$ forms a set of sutures on $D^2 \times S^1$, then we have the following.
\begin{thm}[\cite{Me09Paper}]
\[
V_n \cong SFH(D^2 \times S^1, F_n \times S^1).
\]
Moreover, any chord diagram $\Gamma$ in $V_n$ corresponds to a tight contact structure $\xi_\Gamma$ on $(D^2 \times S^1, F_n \times S^1)$, unique up to isotopy, and the isomorphism takes $\Gamma \mapsto c(\xi_\Gamma)$.
\end{thm}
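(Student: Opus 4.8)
The plan is to construct an explicit $\Z_2$-linear map
\[
\Phi : V_n \To SFH(D^2\times S^1, F_n\times S^1), \qquad \Phi(\Gamma) = c(\xi_\Gamma),
\]
defined on generators of $V_n$ (chord diagrams with $n$ chords), and to prove that $\Phi$ is well defined, is an isomorphism, and carries the chessboard basis of $V_n$ to contact classes. The first task is to pin down $\xi_\Gamma$ itself and its uniqueness: by Giroux's convex surface theory (Section 5), a chord diagram $\Gamma$ on $D^2$ is the dividing set of an essentially unique germ of contact structure near $D^2$, hence of a contact structure on $D^2\times I$ which is the product near the two ends; identifying $D^2\times\{0\}$ with $D^2\times\{1\}$ yields $\xi_\Gamma$ on $(D^2\times S^1, F_n\times S^1)$. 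One must check that $\xi_\Gamma$ is \emph{tight} exactly when $\Gamma$ contains no closed component: a contractible closed dividing curve is an overtwisted disc, and conversely tightness (and uniqueness up to isotopy rel boundary) follows by building the contact structure from bypasses and invoking Honda's classification of tight contact structures on solid tori together with Eliashberg's uniqueness on the ball.

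Next I would prove that $\Phi$ is well defined, i.e.\ that bypass triples are sent to zero: if $\Gamma,\Gamma',\Gamma''$ differ by $120^\circ$ rotations on a bypass disc $B\subset D^2$, then $c(\xi_\Gamma)+c(\xi_{\Gamma'})+c(\xi_{\Gamma''})=0$ in $SFH(D^2\times S^1, F_n\times S^1)$. Here the key input is the bypass exact triangle for sutured Floer homology of Honda--Kazez--Mati\'c, applied to bypass attachments along Legendrian arcs lying in a convex disc $D^2\times\{\mathrm{pt}\}$: attaching such a bypass is a topological product, so it induces maps on $SFH$, three successive attachments of a triple fit into a long exact sequence, and the Honda--Kazez--Mati\'c gluing maps send contact classes to contact classes. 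Chasing this triangle gives the desired relation, so $\Phi$ descends to $V_n$.

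It remains to show $\Phi$ is an isomorphism. I would compute $SFH(D^2\times S^1, F_n\times S^1)$ directly from an explicit admissible Heegaard diagram --- for instance one read off from a partial open book presentation of this product sutured manifold --- organised so that its generators are in bijection with the chessboard / ``ski-slope'' chord diagrams, yielding $\dim_{\Z_2} SFH = 2^{n-1} = \dim_{\Z_2} V_n$ (the latter by the theorem of Section 4). Since the chessboard diagrams already span $V_n$, it then suffices to check that the contact classes $c(\xi_\Gamma)$ of the chessboard contact structures are linearly independent in $SFH$; I would verify this by expressing each such contact class in the Heegaard-generator basis via the (partial) open-book description of contact elements and checking that the resulting change-of-basis matrix is invertible (it should be unitriangular with respect to a natural ordering). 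Surjectivity and injectivity of $\Phi$ follow, and $\Phi(\Gamma)=c(\xi_\Gamma)$ is built in.

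The main obstacle is the interlocking of the last two steps --- showing that the contact classes satisfy \emph{exactly} the bypass relations, no fewer and no more. Concretely the technical heart is twofold: establishing the bypass exact triangle together with the naturality of contact classes under the Honda--Kazez--Mati\'c gluing maps in this precise family of solid tori, and computing the rank of $SFH(D^2\times S^1, F_n\times S^1)$ while simultaneously controlling the coordinates of the $2^{n-1}$ chessboard contact classes. By comparison the combinatorial dimension count and the uniqueness statement for $\xi_\Gamma$ (standard convex-surface theory plus Honda's classification) are routine.
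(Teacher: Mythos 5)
First, note that this survey does not actually prove the theorem: it is quoted from \cite{Me09Paper}, so the only comparison available is with the strategy of that cited work (contact classes and gluing maps of Honda--Kazez--Mati\'c, the bypass relation, the rank computation $2^{n-1}$, and the chessboard/basis diagrams). Your overall architecture --- define $\Phi(\Gamma)=c(\xi_\Gamma)$ on generators, kill bypass triples, compute the rank of $SFH$, and exhibit $2^{n-1}$ independent contact classes --- is essentially that strategy, and your construction and uniqueness of $\xi_\Gamma$ via convex surface theory, Honda's classification and Eliashberg's theorem is fine.

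The genuine gap is in your well-definedness step. The bypass exact triangle concerns bypasses attached along arcs on the \emph{boundary} of a sutured manifold: it changes the sutures and relates the sutured Floer homologies of three \emph{different} sutured manifolds by gluing maps. Here the bypass surgeries take place on an interior convex meridian disc; the sutured manifold $(D^2\times S^1, F_n\times S^1)$ is unchanged, no gluing map is induced (``attaching such a bypass is a topological product'' produces no map at all), and the three classes $c(\xi_\Gamma), c(\xi_{\Gamma'}), c(\xi_{\Gamma''})$ all live in the \emph{same} group. What is needed is a linear relation among three elements of one fixed $SFH$ group, and ``chasing the triangle'' does not produce it. The actual mechanism is localization: the three contact structures agree outside a small solid torus $B\times S^1$ with six longitudinal sutures, the Honda--Kazez--Mati\'c inclusion/gluing map determined by the fixed contact structure on the complement sends contact classes to contact classes, and the relation must then be \emph{verified in the model case} $SFH(D^2\times S^1, F_3\times S^1)\cong(\Z_2)^4$ by explicit computation (or, as in \cite{Me09Paper, Me10_Sutured_TQFT}, by the stacking-pairing/nondegeneracy bootstrap). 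Relatedly, your linear-independence step for the chessboard contact classes rests on an unverified hope (``should be unitriangular''); the clean argument, and the one reflected in this survey, is to realize the pairing $\langle\cdot|\cdot\rangle$ on contact classes by the gluing map that stacks two solid cylinders/tori and caps off, where tightness versus overtwistedness gives $\langle \Gamma_{w_0}|\Gamma_{w_1}\rangle = \langle w_0|w_1\rangle$; this matrix is unitriangular for the pawn-move partial order, hence invertible, which gives independence without computing coordinates of contact classes in a Heegaard diagram. With these two repairs (model-case bypass computation plus gluing naturality, and the pairing argument for independence) your outline matches the cited proof; as written, step two would fail.
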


Therefore, all of the structure we have been discussing, is contained in sutured Floer homology of one of the simplest sutured 3-manifolds, namely a solid torus.

\subsection{Embedded contact homology and string homology}

Another holomorphic invariant is \emph{embedded contact homology}, defined by Hutchings \cite{Hutchings02}. Given a 3-manifold $M$, we consider a contact structure $\xi$ and a contact form $\alpha$. From the 1-form $\alpha$ there is a natural vector field called the \emph{Reeb vector field} $R$, which is defined by 
\[
d\alpha \left( R, \cdot \right) = 0, \quad \alpha(R) = 1.
\]
Embedded contact homology counts holomorphic curves in the \emph{symplectization} $M \times \R$ with symplectic form $d(e^t \alpha)$, where $t$ is the coordinate on $\R$. The most important condition prescribed on holomorphic curves is that the curves must approach \emph{Reeb orbits}, i.e. orbits of $R$, as $t \to \pm \infty$. Roughly, given certain collections of Reeb orbits
\[
{\bf \gamma^+} = \{ \gamma_1^+, \ldots, \gamma_n^+ \} 
\quad \text{and} \quad
{\bf \gamma^-} = \{ \gamma_1^-, \ldots, \gamma_n^- \},
\]
(some of which might be repeated and some of which might be covered multiple times), we consider the moduli space $\mathcal{M}({\bf \gamma^+}, {\bf \gamma^-})$ of \emph{embedded} holomorphic curves which approach ${\bf \gamma^+}$ at $+\infty$ and ${\bf \gamma^-}$ at $-\infty$. Again there is an index formula giving the dimension of the moduli space in terms of the contact geometry of the $\gamma_i^\pm$ and again, after a lot of work, it is possible to define a differential
\[
\partial {\bf \gamma^+} = \sum_{\dim \mathcal{M}({\bf \gamma^+}, {\bf \gamma^-}) = 1} \# \widehat{\mathcal{M}}({\bf \gamma^+}, {\bf \gamma^-}) \cdot {\bf \gamma^-}
\]
where $\partial^2 = 0$. The homology of this complex is \emph{embedded contact homology}, $ECH$. It's possible to define $ECH$ also for sutured manifolds \cite{CGHH}. 

It has recently been shown that embedded contact homology is isomorphic to Heegaard Floer homology (see e.g. \cite{CGH10}). Note that $ECH$, even though it is defined in terms of a contact form, turns out not to depend on the contact structure at all, but is a smooth manifold invariant.

This deep isomorphism between $ECH$ and $HF$ implies that we should be able to obtain all of the combinatorial and algebraic structure we have discussed above, from embedded contact homology. Based on work of Cieliebak--Latschev \cite{CL}, as discussed in \cite{Mathews_Schoenfeld12_string}, one is led to consider the following ideas.

Take a disc $D$ with $2n$ points $F$ marked on the boundary, as we have seen with chord diagrams. But now consider sets of curves on $D$ with boundary $F$, which are not necessarily chord diagrams --- the curves may intersect. We call these \emph{string diagrams}.

We define a $\Z_2$ vector space freely generated by string diagrams, up to homotopy relative to boundary. On this vector space, there is a \emph{differential} $\partial$ defined by \emph{resolving crossings}. That is, given a string diagram $s$, $\partial s$ is the sum of string diagrams, each obtained by resolving one of the crossings of $s$ as shown.

\begin{center}
\begin{tikzpicture}[scale=1, string/.style={thick, draw=red, -to}]
\draw [string] (-1,0) -- (1,0);
\draw [string] (0,-1) -- (0,1);
\draw [shorten >=1mm, -to, decorate, decoration={snake,amplitude=.4mm, segment length = 2mm, pre=moveto, pre length = 1mm, post length = 2mm}]
(1.5,0) -- (2.5,0);
\draw [string] (3,0) -- (3.7,0) to [bend right=45] (4,0.3) -- (4,1);
\draw [string] (4,-1) -- (4,-0.3) to [bend left=45] (4.3,0) -- (5,0);
\end{tikzpicture}
\end{center}

It turns out that $\partial^2 = 0$, and the homology is something with which we are by now familiar.
\begin{thm}[\cite{Mathews_Schoenfeld12_string}]
\[
HS \cong V_n \cong \frac{\Z_2 \langle \text{chord diagrams on $(D^2, F)$} \rangle }{ \text{Bypass relation} }
\]
\end{thm}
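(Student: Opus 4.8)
The second isomorphism is a matter of definition: $V_n$ was defined precisely as $\Z_2\langle\text{chord diagrams with $n$ chords}\rangle$ modulo the bypass relation, so all the real content is in identifying the string homology $HS$ with $V_n$. The plan is to build a map at the chain level and then use a filtration/spectral-sequence argument to compute $HS$.

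First, observe that a string diagram with no crossings is automatically a $\partial$-cycle (there is nothing to resolve), and such diagrams are exactly chord diagrams, together with diagrams containing a closed loop, which we have already agreed to set to $0$. This yields a linear map $\Z_2\langle\text{chord diagrams}\rangle \to \ker\partial \to HS$. The first thing to check is that it descends to $V_n$, i.e.\ that whenever $\Gamma,\Gamma',\Gamma''$ form a bypass triple the cycle $\Gamma+\Gamma'+\Gamma''$ bounds. This is a purely local statement, supported inside the bypass disc $B$: one exhibits a string diagram $s$ agreeing with $\Gamma$ outside $B$ and carrying crossings inside $B$, with $\partial s = \Gamma+\Gamma'+\Gamma''$, all intermediate homotopies taking place inside $B$; equivalently one computes the homology of the subquotient complex spanned by diagrams differing from $\Gamma$ only inside $B$ and finds it one-dimensional. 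This gives a well-defined map $\Phi\colon V_n \to HS$.

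To show $\Phi$ is an isomorphism I would filter the string chain complex by a complexity measure — the natural candidate is the minimal number of crossings among homotopy representatives, refined if necessary so that $\partial$ strictly drops it — and analyze the associated spectral sequence. The $E_1$ page should be supported on the crossingless (= chord-diagram) generators, with the induced $d_1$ reproducing exactly the bypass relation, so that $E_2 \cong V_n$; one then checks there are no higher differentials and that the sequence converges to $HS$, identifying $HS$ with $V_n$ compatibly with $\Phi$. Equivalently, and perhaps more transparently, one runs a Gaussian elimination on the chain complex, cancelling pairs of generators across crossings until only chord-diagram generators survive, with the residual differential being precisely the bypass map; then $HS = V_n$ is immediate.

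The main obstacle is the combinatorial bookkeeping of that last step: one must show that resolving crossings never produces homology beyond what the bypass relation already records — that the complexity filtration is genuinely exhausted by the chord diagrams, and that the surviving $d_1$ is exactly the bypass relation, no more and no less. The delicate input is that the differential (and the complexity filtration) must be understood on \emph{homotopy classes} of string diagrams rather than on fixed immersed representatives, which is where one invokes the analysis behind the Cieliebak–Latschev string-topology operations; once the chain complex is pinned down, the identification $E_2 \cong V_n$ is a finite check against the local bypass picture. As a sanity check, the dimension must come out to $\dim HS = 2^{n-1}$, matching the earlier theorem on $V_n$; this also follows a posteriori from $HS \cong SFH(D^2\times S^1, F_n\times S^1) \cong V_n$ via the $ECH = HF$ correspondence, which provides independent confirmation of the combinatorial computation.
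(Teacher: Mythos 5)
The theorem you are addressing is not actually proved in this note: it is quoted from \cite{Mathews_Schoenfeld12_string}, and the only evidence offered here is the local picture of three mutually crossing strands whose resolutions give the bypass triple, which the text itself calls ``far from a proof''. The first half of your proposal reproduces that heuristic correctly and in a little more detail: the second isomorphism is definitional, crossingless diagrams are cycles, and the bypass relation dies in $HS$ because the triple-crossing string diagram $s$ supported in the bypass disc satisfies $\partial s = \Gamma + \Gamma' + \Gamma''$ once you check that each single resolution can be homotoped to be crossingless. So the map $\Phi \colon V_n \to HS$ is fine, modulo the well-definedness of $\partial$ on homotopy classes and $\partial^2 = 0$, which you rightly flag but defer to the string-topology background; you should also decide whether diagrams with closed components are generators at all --- ``setting them to zero'' is a convention imported from the chord-diagram side, not something the string complex does for you.

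The genuine gap is the isomorphism step. With the filtration by minimal crossing number, $\partial$ strictly decreases filtration, so the $E_1$ page is the entire associated graded space --- one copy of every homotopy class of string diagram --- and not just the crossingless generators; your assertion that $E_1$ is supported on chord diagrams does not follow from the filtration you set up, so the subsequent claims about $d_1$ reproducing the bypass relation and the sequence degenerating are unsupported. Equivalently, the ``Gaussian elimination'' you invoke requires exhibiting an explicit cancellation scheme that pairs off all generators with crossings and leaves a residual differential encoding exactly the bypass relation; producing that scheme --- or otherwise proving both that chord diagrams span $HS$ and that $\Phi$ is injective, the latter needing some dual pairing or comparison map out of $HS$ --- is the entire content of the theorem, and it is precisely what your outline labels ``combinatorial bookkeeping'' without carrying it out. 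Finally, the proposed sanity check via $ECH \cong HF$ is circular: it presupposes an identification of $HS$ with $SFH$, which is a rephrasing of the statement being proved, so it cannot serve as independent confirmation.
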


The ``reason'' for this relation --- which is far from a proof --- is the following picture.

\begin{center}
\begin{tikzpicture}[
scale=1.2, 
string/.style={thick, draw=red, postaction={nomorepostaction, decorate, decoration={markings, mark=at position 0.5 with {\arrow{>}}}}}]

\draw (-6,0) circle (1 cm);
\draw (3,0) circle (1 cm); 	
\draw (0,0) circle (1 cm);
\draw (-3,0) circle (1 cm);

\draw [string] ($ (-6,0) + (-90:1) $) to [bend right=45] ($ (-6,0) + (90:1) $);
\draw [string] ($ (-6,0) + (30:1) $) to [bend right=45] ($ (-6,0) + (210:1) $);
\draw [string] ($ (-6,0) + (150:1) $) to [bend right=45] ($ (-6,0) + (-30:1) $);

\draw [string] (30:1) arc (120:240:0.57735);
\draw [string] (0,-1) -- (0,1);
\draw [string] (150:1) arc (60:-60:0.57735);

\draw [string] (3,0) ++ (150:1) arc (-120:0:0.57735);
\draw [string] (3,0) ++ (30:1) -- ($ (3,0) + (210:1) $);
\draw [string] (3,0) ++ (-90:1) arc (180:60:0.57735);

\draw [string] (-3,0) ++ (30:1) arc (-60:-180:0.57735);
\draw [string] ($ (-3,0) + (150:1) $) -- ($ (-3,0) +  (-30:1) $);
\draw [string] (-3,-1) arc (0:120:0.57735);

\draw (-7.5,0) node {$\partial$};
\draw (-4.5,0) node {$=$};
\draw (-1.5,0) node {$+$};
\draw (1.5,0) node {$+$};

\end{tikzpicture}
\end{center}

This idea of taking vector spaces generated by topological classes of curves, and resolving their crossings, is closer to the sorts of objects studied in \emph{string topology} (e.g. \cite{CS}) and as such we can say that contact topology and sutured Floer homology are expressed as a \emph{string homology}. Further details and generalisations of this result are given in \cite{Mathews_Schoenfeld12_string}.

\addcontentsline{toc}{section}{References}

\small

\bibliography{danbib}
\bibliographystyle{amsplain}

\end{document}